\documentclass[UTF-8,reqno]{amsart}
\usepackage{enumerate}
\usepackage{mhequ}
\usepackage[margin=1.2in]{geometry}
\usepackage{amssymb,url,color, booktabs,nccmath}
\usepackage{mathrsfs}
\usepackage{enumitem}
\usepackage{graphicx}
\usepackage{tikz}
\usetikzlibrary{shapes,snakes}
\usetikzlibrary{calc}
\usetikzlibrary{decorations.shapes}

\usepackage[colorinlistoftodos,prependcaption,color=yellow,textsize=tiny,textwidth=2cm]{todonotes}
\usepackage{color}
\usepackage[colorlinks=true]{hyperref}
\hypersetup{
    linkcolor=blue,          
    citecolor=red,        
    filecolor=blue,      
    urlcolor=cyan
}

\newcommand{\typocheck}[2]{%
  \texorpdfstring{%
    \ifmmode
      \text{\sout{\ensuremath{#1}}}\,\textcolor{red}{#2}%
    \else
      \sout{#1}\,\textcolor{red}{#2}%
    \fi
  }{#2}%
}

\definecolor{darkergreen}{rgb}{0.0, 0.5, 0.0}
\definecolor{darkblue}{RGB}{0,0,139}

\numberwithin{equation}{section}

\newcommand{\be}{\begin{eqnarray}}
\newcommand{\ee}{\end{eqnarray}}
\newcommand{\ce}{\begin{eqnarray*}}
\newcommand{\de}{\end{eqnarray*}}
\newtheorem{theorem}{Theorem}[section]
\newtheorem{lemma}[theorem]{Lemma}

\newtheorem{proposition}[theorem]{Proposition}

\allowdisplaybreaks

\tikzset{
        dot/.style={circle,fill=black,inner sep=0pt, outer sep=0.7pt, minimum size=1mm},
        Phi/.style={white!40!red,thick,snake=coil,segment amplitude=0.6pt, segment length=2pt},
         Z/.style={black!40!green,thick,snake=coil,segment amplitude=0.6pt, segment length=2pt},
        C/.style={thick,black!20!blue},
          Cr/.style={thick,black!20!red},
            Cg/.style={thick,black!20!green},
       }

\usepackage{mathtools,amssymb,amsfonts,amsthm,amscd,amsmath,amsgen,upref,enumerate,nicefrac,pdfpages}

\begin{document}
\newcommand{\Q}{\mathbb{Q}}
\newcommand{\R}{\mathbb{R}}
\newcommand{\DD}{\mathbb{D}}
\newcommand{\cD}{\mathcal{D}}
\newcommand{\cG}{\mathcal{G}}
\newcommand{\TND}{\mathbb{T}^d_N}
\newcommand{\TD}{\mathbb{T}^d}
\newcommand{\I}{\mathbb{I}}
\newcommand{\Z}{\mathbb{Z}}
\newcommand{\N}{\mathbb{N}}
\newcommand{\F}{\mathcal{F}}
\newcommand{\p}{\mathbb{P}}
\newcommand{\hs}{\hspace{1cm}}
\newcommand{\XX}{\mathbb{X}}
\newcommand{\M}{\mathcal{M}}
\newcommand{\Pp}{\mathcal{P}}
\newcommand{\W}{\mathcal{W}}
\newcommand{\ED}[1]{{#1}^\epsilon_\delta}
\newcommand{\EDM}[1]{{#1}^\epsilon_{\delta,m}}
\newcommand{\EOM}[1]{{#1}^\epsilon_{0,m}}
\newcommand{\EDMg}[1]{{#1}^{\epsilon,g}_{\delta,m}}
\newcommand{\RR}{\mathcal{R}}
\newcommand{\E}{\mathbb{E}}
\newcommand{\h}{\mathcal{H}}
\newcommand{\cL}{\mathcal{L}}
\newcommand{\Ii}{\mathrm{I}}
\newcommand{\eps}{\epsilon}
\newcommand{\supp}{\mathrm{supp}}
\newcommand{\law}{\mathrm{Law}}
\newcommand{\leb}{\lambda}
\newcommand{\id}{\mathrm{id}}
\newcommand{\pr}{\mathrm{pr}}
\newcommand{\Ss}{\mathfrak{S}}
\newcommand{\inter}{\mathrm{int\,}}
\newcommand{\dhcomment}[1]{\textbf{\textcolor{red}{#1}}}
\newcommand{\PP}{\mathbb{P}}
\newcommand{\cH}{\mathcal{H}}
\newcommand{\cX}{\mathcal{X}}
\newcommand{\cE}{\mathcal{E}}
\newcommand{\T}{\mathbb T}
\newcommand{\scrE}{\mathscr{E}}
\newcommand{\step}[1]{\paragraph{\textbf{Step #1}}}
\newcommand{\Law}{{\rm Law}}

\newcommand{\Ent}{\textrm{Ent}}

\newcommand{\norm}[1]{\| #1 \|}
\newcommand{\abs}[1]{|#1|}
\newcommand{\ud}[1]{\, \mathrm{d} #1}
\newcommand{\dx}{\ud{x}}
\newcommand{\dy}{\ud{y}}
\newcommand{\dxi}{\ud \xi}
\newcommand{\deta}{\ud{\eta}}
\newcommand{\dr}{\ud{r}}
\newcommand{\drp}{\ud{r'}}
\newcommand{\dxp}{\ud{x'}}
\newcommand{\dxip}{\ud{\xi'}}
\newcommand{\dyp}{\ud{y'}}
\newcommand{\ds}{\ud{s}}
\newcommand{\dt}{\ud{t}}
\newcommand{\dz}{\ud{z}}
\newcommand{\dd}{d}
\newcommand{\C}{\textrm{C}}
\newcommand{\ve}{\varepsilon}
\newcommand{\sgn}{\textrm{sgn}}
\newcommand{\mcS}{\mathcal{S}}
\newcommand{\Supp}{\textrm{Supp}}

\renewcommand{\d}{\delta}

\title[LDP for the Wick ordered cubic NLW]{\Large Low-Temperature Large Deviations for the Two-Dimensional Wick-Ordered Cubic Wave Equation}

\author{Daniel Heydecker}

\address{Department of Mathematics, Postboks 1053, Blindern, 0316 Oslo}
	\email{daniehey@uio.no}

\subjclass[2020]{60F10 (primary), 35L05, 60H30, 81T08}

\keywords{}

\begin{abstract}
We prove a trajectory large deviation principle, in the low-temperature limit, for the two-dimensional Wick-ordered defocusing cubic nonlinear wave equation, with initial data drawn from the invariant Gibbs measure. We further show that asymptotically vanishing high-frequency perturbations of the Gibbs initial data, with the same static rate function, generate a continuum of distinct mass-shifted trajectory rate functions.

\end{abstract}

\maketitle

\setcounter{tocdepth}{1}
\tableofcontents

\section{Introduction \& Statement of Results}
Among the most important results of $20^{\rm th}$ century constructive quantum field theory is the construction of the two-dimensional $P(\phi)_2$ {Euclidean field} \cite{Simon,GlimmJaffe,nelson1966quartic,glimm1968boson,glimm1974wightman,glimm1976convergent}. Adjoining an independent Gaussian velocity gives the corresponding phase-space Gibbs measure, which at inverse temperature $\beta=\eps^{-1}>0$ may be formally written \begin{equation}\label{eq:gibbs-intro}\begin{split}
\mu^\eps(\dd u, \dd v) & =`` \exp\bigg(-\eps^{-1}\underbrace{\int_{\T^2} (\frac12|\nabla u|^2+\frac12 u^2+\frac12v^2+\frac14u^4)}_{:=\cE(u,v)} \bigg)du dv ''. \end{split} 
\end{equation} The Hamiltonian flow associated to the real-valued, quartic energy $\cE$ is the cubic, nonlinear wave equation \begin{equation}\label{eq:classical-nlw}
\partial_t^2u+(1-\Delta)u+u^3=0, \qquad (u,\partial_tu)|_{t=0}=(f,g)\in H^s(\T^2)\times H^{s-1}(\T^2)=:\cH^s, 
\end{equation} which, in $d=2$, is locally well-posed as soon as $s\ge \frac14$\cite{lindblad1995existence,kapitanski1994weak}, and ill-posed for $s<\frac14$ \cite{ChristCollianderTaoNLW},\cite[Section~1.6]{OOT}. For a measure arising in quantum field theory, the passage to the classical regime may be understood through the {\em semiclassical} or {low-temperature} limit \cite{BarashkovThesis,KloseMayorcas,LacoinRhodesVargas,gess2024low}, identifying $\eps\sim \hbar$ on the order of Planck's constant, or as the temperature respectively, and quantifying the concentration on classical minimisers of the action. Mathematically, we quantify the {fluctuations} via a large deviation principle, or equivalently Laplace principle, in the limit $\eps\to 0$. The task of this paper is to investigate the trajectorial LDP for the solution $U^\eps$ to the wave equation in equilibrium $$ \mu^\eps(U^\eps \in A)\asymp \exp(-\eps^{-1}\inf_A J(u)). $$ In order to give a mathematical meaning to both the initial distribution $X^\eps\sim\mu^\eps$ and the corresponding solution $U^\eps$, the cubic and quartic terms appearing in equations \eqref{eq:gibbs-intro} - \eqref{eq:classical-nlw} must be renormalised. Indeed, interpreting the {quadratic} terms of \eqref{eq:gibbs-intro} as defining a Gaussian measure $\gamma^\eps(du, dv)$, the measure is supported only on $\cH^{0-}=\cap_{s<0}\cH^s$,  below the  deterministic local well-posedness threshold $s=\frac14$ for \eqref{eq:classical-nlw}. Nelson's \cite{nelson1966quartic} construction of the measure $\mu^\eps$ is to replace the quartic power by the Wick power \begin{equation}\label{eq:gibbs_intro_2}  \mu^\eps(du,dv):=Z_\eps^{-1}\exp\left(
 -\frac1{4\eps}\int_{\T^2}:u^4:_\eps\,\dd x
 \right)\gamma^\eps(\dd u, \dd v) \end{equation} where the Wick product is defined relative to $\gamma^\eps$. Similarly, to avoid triviality \cite{OhPocovnicuTzvetkov,oh2020remark}, the cube in \eqref{eq:classical-nlw} must be replaced by the third Wick power, giving the Wick-ordered nonlinear wave equation (NLW)
\begin{equation}\label{eq:wick-nlw}
 \partial_t^2u+(1-\Delta)u+:u^3:_\eps=0,
 \qquad (u,\partial_tu)|_{t=0}=X^\eps.
\end{equation}
 For $\mu^\eps$-almost every initial datum, \eqref{eq:wick-nlw} admits a globally defined canonical solution $U^\eps$, and the measure $\mu^\eps$ given by \eqref{eq:gibbs_intro_2} is invariant under the flow \cite{OhThomann,OOT}.  \\\\The static semiclassical limit is classical: For any fixed $s<0$, the laws $\mu^\eps$ satisfy a large deviation principle on $\cH^s$ with speed $\eps^{-1}$ and good rate function given by the Hamiltonian
\begin{equation}\label{eq:energy}
 \cE(f,g)=\frac12\norm f_{H^1}^2+\frac12\norm g_{L^2}^2
             +\frac14\int_{\T^2}f^4\,\dd x,
\end{equation}
understood to be infinite outside $H^1\times L^2$.  {See also \cite{barashkov2023variational,gess2024low} for related static and sharper low-temperature results for the $\Phi^4_2$ measure.} By analogy to Schilder's Theorem, although typical samples of \eqref{eq:gibbs-intro} are distributions, its low-temperature large deviations are governed by the unrenormalised classical energy \eqref{eq:energy}. It is then natural to ask whether the same classicalisation occurs dynamically. For $T>0$ and $u\in \cH^1$, let $\Phi_T(u)$ denote the global energy solution to \eqref{eq:classical-nlw} on $[-T,T]$ with initial data $u$, and set
\[
 \cX_T^s=C([-T,T];\cH^s).
\]
Identifying a scalar solution with the phase-space path $(u,\partial_tu)$, define a candidate rate function $J:\cX_T^s\to[0,\infty]$ by
\begin{equation}\label{eq:dynamic-rate}
 J(u)=
 \begin{cases}
  \cE(u(0)),&u(0)\in H^1\times L^2,\ \text{and }u=\Phi_T(u(0));\\
  +\infty,&\text{otherwise.}
 \end{cases}
\end{equation}
If the singular data-to-solution map for \eqref{eq:wick-nlw} behaved continuously in the topology of the static LDP, the contraction principle would immediately predict \eqref{eq:dynamic-rate}.  The first result of the work is that this prediction is indeed correct.
\begin{theorem}[Trajectory LDP for Gibbs Measure Initial Data]\label{thm:main}
For every $\eps>0$, let $X^\eps\sim\mu^\eps$, and let $U^\eps$ be the canonical solution to \eqref{eq:wick-nlw}.  Then the laws of $U^\eps$ satisfy an LDP on $\cX_T^s$, with speed $\eps^{-1}$ and good rate function $J$.  Thus, for every closed $\mathcal A\subset\cX_T^s$ and every open $\mathcal U\subset\cX_T^s$,
\begin{equation}\label{eq: UB statement}
 \limsup_{\eps\to0}\eps\log\mu^\eps(U^\eps\in\mathcal A)
 \le -\inf_{u\in\mathcal A}J(u),
\end{equation}
and
\begin{equation}\label{eq: lb statement}
 \liminf_{\eps\to0}\eps\log\mu^\eps(U^\eps\in\mathcal U)
 \ge -\inf_{u\in\mathcal U}J(u).
\end{equation}
\end{theorem}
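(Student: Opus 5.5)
We would prove Theorem~\ref{thm:main} by an approximate contraction principle, writing the canonical solution through the Da Prato--Debussche decomposition. Write $X^\eps=(f^\eps,g^\eps)$ and let $Z^\eps(t)=S(t)X^\eps$ be the linear Klein--Gordon evolution. Set $U^\eps=Z^\eps+V^\eps$, where $V^\eps$ solves
\[
\partial_t^2V+(1-\Delta)V+V^3+3V^2Z^\eps+3V\,{:}(Z^\eps)^2{:}_\eps+{:}(Z^\eps)^3{:}_\eps=0,\qquad (V,\partial_tV)|_{t=0}=0 .
\]
The remainder $V^\eps$ is a continuous function $\Psi$ of the enhanced data $\Xi^\eps=(Z^\eps,{:}(Z^\eps)^2{:}_\eps,{:}(Z^\eps)^3{:}_\eps)$ in $C([-T,T];W^{-\alpha,\infty})^3$ with $\alpha>0$ small. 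This holds locally in time by the usual Strichartz/product estimates; globally on $[-T,T]$ we would use the deterministic energy argument of \cite{OhThomann,OOT}, which gives continuity of $\Psi$ on the set of enhanced data of bounded norm. The problem then reduces to two pieces: an LDP for the enhanced data, and transferring it through $\Psi$.

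\textbf{Step 1 (Gaussian case).} First take $X^\eps\sim\gamma^\eps$, so that $X^\eps=\sqrt\eps\,X^1$. We would prove an LDP for $\Xi^\eps$ with rate function $\frac12\|h\|_{\cH^1}^2$ on the skeleton $(S(t)h,(S(t)h)^2,(S(t)h)^3)$, and $+\infty$ off the skeleton. This follows the approach of extended contraction principles for rough paths and regularity structures. The $n$-th Wick power is $\eps^{n/2}$ times a homogeneous Wiener chaos. Working with the Cameron--Martin shift $X^1\mapsto X^1+\eps^{-1/2}h$, the binomial expansion shows that the shifted enhanced data equal the skeleton plus terms carrying positive powers of $\eps$ times chaoses. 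Hypercontractivity bounds the moments of these terms, giving exponentially good approximations: we need $\mathbb P(\|\Xi^\eps-\text{skeleton}\|>\delta)$ to be superexponentially small uniformly for $h$ in $\cH^1$-balls. The standard Azencott-type criterion, or equivalently a Laplace principle via Boué--Dupuis, then yields the LDP, and the rate function is good.

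\textbf{Step 2 (tilting to $\mu^\eps$).} Define $F_\eps(u)=\frac14\int{:}u^4{:}_\eps$, so that $\eps^{-1}F_\eps(\sqrt\eps X^1)$ is the Nelson density exponent. The skeleton statement of Step 1 extends to the quartic power, so $F_\eps\to\frac14\int u^4$ along the LDP. To apply Varadhan's lemma (tilted LDP) we need exponential tightness and uniform integrability:
\[
\limsup_\eps \eps\log \E_{\gamma^\eps}\big[e^{-(1+\eta)\eps^{-1}F_\eps}\big]<\infty ,
\]
together with the same bound for the normalisation $Z_\eps$. Nelson's estimate gives this, or one can use the Boué--Dupuis variational bound of \cite{barashkov2023variational}, since the only negative part of ${:}u^4{:}$ is of logarithmic size. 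The result is an LDP for the pair $(\Xi^\eps,\mu^\eps)$ with rate $\cE(h)$ on the skeleton. Then $\Psi$ evaluated on the skeleton $(S(t)h,\dots)$ returns $\Phi_T(h)-S(t)h$, and the skeleton $S(t)h$ plus $\Psi$(skeleton) equals $\Phi_T(h)$, which gives $J$.

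\textbf{Step 3 (contraction through $\Psi$), the main obstacle.} The map $\Psi$ is continuous only locally, since global existence rests on a probabilistic and energy argument, so the plain contraction principle does not apply directly. We would prove exponential tightness of the enhanced data in the norm above. Then, on sublevel sets $\{\|\Xi\|\le R\}$, the deterministic globalisation argument bounds the growth of the energy of $V$ on $[-T,T]$, for instance through a Gronwall estimate on a modified energy, so $\Psi$ is uniformly continuous there. The complement has probability at most $e^{-c(R)/\eps}$ with $c(R)\to\infty$, and the extended contraction principle then closes the argument. The delicate point is checking that the energy bound depends only on the enhanced-data norm and not on $\eps$, including the term $3V^2Z^\eps$. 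Failing that, we would fall back on invariance of $\mu^\eps$ and iterate the local-in-time continuity over short time steps.
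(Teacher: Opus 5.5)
Your Steps~1 and~2 match the paper in substance. The Gaussian enhanced data, jointly with the quartic Wick interaction, satisfy an LDP by the Wiener-chaos/Cameron--Martin mechanism; the paper simply invokes \cite[Theorem~3.5]{HairerWeber}. The passage to $\mu^\eps$ is a Varadhan/Bryc tilt, whose exponential-moment hypothesis follows from the scaling $\eps^{-1}F_\eps(\sqrt\eps X^1)=\eps\mathcal R(X^1)$ together with $e^{-\mathcal R}\in L^r(\gamma^1)$ (Lemma~\ref{lem:change-measure}).

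The gap is Step~3. Your route needs the remainder map $\Psi$ to be defined on all of $[-T,T]$ and uniformly continuous on norm balls of \emph{arbitrary} enhanced data. You attribute this to a deterministic energy argument in \cite{OhThomann,OOT}, but the global theory there is probabilistic. Oh--Thomann globalise by Bourgain's invariant-measure argument, which gives existence for almost every datum and no bound in terms of the enhanced-data norm. A direct Gronwall estimate on the energy of $V$ does not close either. Its time derivative contains the term $\int\partial_tV\,V^2Z$, which pairs an $L^2$-level quantity with a distribution of negative regularity. A deterministic global bound would need a genuinely new argument, of the type developed in \cite{GubinelliKochOhTolomeo}, which you do not supply. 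Your fallback, ``invariance plus iteration of local continuity'', is not yet an argument. So as written nothing transports the enhanced LDP to $U^\eps$. (Also, if $\Psi$ really were continuous on every ball, it would be continuous everywhere and the plain contraction principle would already suffice, so the exponential-tightness layer is redundant.)

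The missing idea is that global solvability and continuity are needed only on an open set containing the effective domain. Let $\mathscr D_T$ be the set of enhanced data for which the remainder equation has a solution on the whole of $[-T,T]$.
\begin{itemize}
\item \emph{Openness and continuity.} Take $\Xi\in\mathscr D_T$ with remainder bounded by $R$. The local existence time and the continuous-dependence constants depend only on $R$ and on the size of the rough data on short intervals. Splitting $[-T,T]$ into finitely many short pieces and propagating local continuous dependence from one piece to the next shows that a neighbourhood of $\Xi$ lies in $\mathscr D_T$, and that $\Gamma_T$ is continuous at $\Xi$ (Lemma~\ref{lem:path-global-reconstruction}).
\item \emph{Effective domain.} $\{\mathbf I_0<\infty\}$ consists only of skeletons $\mathfrak L_0(h)$ with $h\in H^1\times L^2$. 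You already observed that for these $Sh+v$ solves the classical defocusing NLW with energy data, which is global by conservation of $\cE$. Hence $\{\mathbf I_0<\infty\}\subset\mathscr D_T$.
\item \emph{Conclusion.} The effective contraction principle, Lemma~\ref{lem:path-effective-contraction} (from \cite[Lemma~3.3]{HairerWeber}), applied to the Borel extension $\widehat\Gamma_T$, then gives the LDP with rate $J$ immediately. The almost-sure global theory is used only to identify $U^\eps=\widehat\Gamma_T(\mathfrak Z^\eps)$ almost surely.
\end{itemize}

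A minor point: your enhanced data must also carry $(Z^\eps,\partial_tZ^\eps)\in C([-T,T];\cH^s)$. Otherwise $U^\eps$ does not land in $\cX_T^s$, because $\partial_tZ^\eps$ is not a continuous function of $Z^\eps$ in $C([-T,T];W^{-\alpha,\infty})$.
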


The contraction-principle heuristic motivating the definition of $J$ takes place across the singularity of the model.  As discussed above, the initial data $X^\eps$ are valued in the space $\cH^{0-}$, and in particular the Wick nonlinearity in \eqref{eq:wick-nlw} is not a function of the initial distribution alone in any locally continuous sense.  Indeed, the canonical data-to-solution map is almost everywhere discontinuous in the ambient topology of $\cH^{0-}$ \cite[Theorem~1.11]{OOT}; see also {\cite{ChristCollianderTaoNLW}}. Nor can the continuity of $u\mapsto \Phi_T(u)$ be exploited on the sublevel set $\{\cE<\infty\}=\cH^1$, as the contraction principle requires the continuity of the solution map in the {ambient topology}. {The} resolution follows the strategy of Hairer--Weber \cite[Sections~3--4]{HairerWeber}: first establish a large deviation principle for an enhanced Gaussian object, and then contract through a solution map which is locally continuous on a neighbourhood of the effective domain. In the present setting the enhancement is finite and consists of the linear wave together with its Wick powers.\\\\ Theorem \ref{thm:main} raises a second, more structural question, of whether the dynamical LDP is determined by the static LDP of $\mu^\eps$. We resolve this in the negative by the following theorem.

\begin{theorem}[Failure of the Trajectory LDP for nearly-Gibbs initial data]\label{thm: bad LDP}
There exists a family $\{\overline\mu^\eps\}_{\eps>0}\subset\mathcal P(\cH^s)$ which satisfies the same static LDP as $\mu^\eps$, with speed $\eps^{-1}$ and rate function $\cE$, and such that $\overline\mu^\eps\ll\gamma^\eps$.  However, the canonical solutions $\overline{U}^\eps$ to \eqref{eq:wick-nlw} with initial data $\overline{X}^\eps\sim \overline\mu^\eps$ satisfy a good LDP on $\cX_T^s$ with a rate function different from \eqref{eq:dynamic-rate}.
\end{theorem}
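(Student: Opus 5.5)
The plan is to perturb the Gibbs data by an independent high-frequency Gaussian whose $\cH^s$-norm vanishes but whose Wick renormalisation constant does not. This shifts the effective mass of the limiting dynamics while leaving the static LDP untouched. Concretely, fix $\lambda>0$ and a sequence $N_\eps\to\infty$. Let $Y^\eps$ be an independent centred Gaussian field on $\cH^s$, built with Fourier support in the annulus $N_\eps\le|n|\le 2N_\eps$, e.g. $Y^\eps=(\sqrt{\eps}\,a_\eps\sum_{N_\eps\le|n|\le2N_\eps} g_n e_n/\langle n\rangle,0)$ with i.i.d. standard Gaussians $g_n$. Choose the amplitude $a_\eps$ so that
\[
\E\big[(Y^\eps_1(x))^2\big]=\eps a_\eps^2\sum_{N_\eps\le|n|\le2N_\eps}\langle n\rangle^{-2}\longrightarrow \frac{\lambda}{3},
\]
which forces $a_\eps^2\sim \lambda/(3\eps\log 2)$. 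Set $\overline X^\eps=X^\eps+Y^\eps$ and $\overline\mu^\eps=\law(\overline X^\eps)$.

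\emph{Step 1: static properties.} Since $\|Y^\eps\|_{\cH^s}^2$ has mean of order $N_\eps^{2s}$ and Gaussian concentration at scale comparable to its variance, taking $N_\eps$ growing fast enough, say $N_\eps^{2s}\ll\eps$, gives $\PP(\|Y^\eps\|_{\cH^s}>\delta)\le e^{-C/\eps}$ for every $C$. Hence $Y^\eps$ is exponentially negligible, and $\overline\mu^\eps$ inherits the static LDP of $\mu^\eps$ with rate $\cE$.

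For absolute continuity, note that $\law(X^\eps_{\rm Gauss}+Y^\eps)$ is Gaussian with covariance differing from that of $\gamma^\eps$ only on finitely many modes. It is therefore equivalent to $\gamma^\eps$ (Feldman--H\'ajek, trivially, since only finitely many modes are affected). Because $\mu^\eps\ll\gamma^\eps$ with a density depending measurably on the field, we can realise $\overline\mu^\eps$ as $\law(X^\eps+Y^\eps)$ and check that $\overline\mu^\eps\ll\gamma^\eps$. A cleaner route is to define $\overline\mu^\eps$ as the Gibbs density applied to the Gaussian $\gamma^\eps*\law(Y^\eps)$; I would adopt whichever formulation makes the next step simplest.

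\emph{Step 2: the dynamics.} I would use the Hairer--Weber scheme behind Theorem \ref{thm:main}. Write $\overline U^\eps=\Psi^\eps+\Psi^\eps_Y+v$, where $\Psi^\eps$ and $\Psi^\eps_Y$ are the linear waves from the Gibbs Gaussian part and from $Y^\eps$ respectively. Expanding the Wick cube relative to $\gamma^\eps$ gives
\[
{:}(\Psi+\Psi_Y+v)^3{:}_\eps = {:}(\Psi+v)^3{:} + 3(\Psi+v)\Psi_Y^2+3(\Psi+v)^2\Psi_Y+\Psi_Y^3.
\]
The key claims are:
\begin{enumerate}
\item[(a)] $\Psi_Y^2\to\lambda/3$ in $C_tH^{-\kappa}$, and $\Psi_Y^2$ minus its mean is superexponentially small. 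This is a high-frequency, hypercontractive computation; the time-oscillating cross terms $\cos(2\langle n\rangle t)$ average out weakly.
\item[(b)] $\Psi_Y$, $\Psi_Y^3$, and the products $\Psi_Y\cdot(\text{enhancement of }\Psi)$ vanish superexponentially in the relevant negative Sobolev norms. Here the tail bounds come from finite-chaos hypercontractivity with the prefactor $\eps a_\eps^2=O(1)$ under control.
\end{enumerate}
Then the enhanced data $(\Psi^\eps,{:}(\Psi^\eps)^2{:},{:}(\Psi^\eps)^3{:},\Psi_Y^2,\dots)$ are exponentially equivalent to the Theorem \ref{thm:main} enhancement augmented by the constant $\lambda/3$. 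Contracting through the locally continuous solution map for the modified equation $\partial_t^2u+(1+\lambda-\Delta)u+u^3=0$ then yields an LDP with rate
\[
J_\lambda(u)=\cE(u(0))\ \text{if } u=\Phi^{\lambda}_T(u(0)),\qquad J_\lambda(u)=+\infty\ \text{otherwise}.
\]
For $\lambda\ne0$ this differs from $J$: take any nonzero $f\in H^1$, for which $\Phi_T$ and $\Phi^\lambda_T$ give different trajectories. Varying $\lambda$ produces the advertised continuum of rate functions.

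\emph{Main obstacle.} The hardest part is Step 2(a)--(b). One must show that $\Psi_Y^2-\lambda/3$, and especially the cross products $\Psi_Y\,{:}(\Psi^\eps)^k{:}$ and $\Psi_Y^2\Psi^\eps$, are small with superexponential probability at speed $\eps^{-1}$ in $C_t$-valued negative Sobolev spaces. Some products of $\Psi_Y$ with $\Psi$ are resonant, so they have no vanishing mean automatically. I would handle these with second-moment computations in Fourier variables, followed by hypercontractivity: for a chaos of order $k$ with variance $\sigma^2$, the tail is $\exp(-c(\delta/\sigma)^{2/k})$. This requires $\sigma^{2/k}\ll\eps$, which I would arrange by choosing $N_\eps$ growing very fast, e.g. $N_\eps=e^{1/\eps^2}$. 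I also expect a check that the local solution theory absorbs the extra linear term $3v\Psi_Y^2$ as a mass shift, which holds because $\Psi_Y^2-\lambda/3$ vanishes in $C_tH^{-\kappa}$ and $\lambda/3$ is smooth.
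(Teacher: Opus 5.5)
Your strategy is the paper's: an independent Gaussian shift on a high-frequency annulus with $O(1)$ pointwise variance, Wick ordering kept relative to $\gamma^\eps$, exponential equivalence of the perturbed enhancement with a constant-shifted one (the paper's $\mathcal T_m(\mathfrak Z^\eps)$), and contraction through the reconstruction map. However, claim (a) is false as stated, and it is the step that produces the mass shift. The problem is that you perturb only the position. With zero velocity, $\Psi_Y(t)=\cos(t\langle\nabla\rangle)Y^\eps_1$, so
\[
 \E\big[\Psi_Y(t,x)^2\big]=\eps a_\eps^2\sum_{N_\eps\le|n|\le2N_\eps}\frac{\cos^2(t\langle n\rangle)}{\langle n\rangle^2}
 =\frac{\lambda}{6}+o(1)+\frac{\eps a_\eps^2}{2}\sum_{N_\eps\le|n|\le2N_\eps}\frac{\cos(2t\langle n\rangle)}{\langle n\rangle^2}.
\]
The last term equals $\lambda/6+o(1)$ at $t=0$ and stays of order $\lambda$ for $|t|\lesssim N_\eps^{-1}$. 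Hence $\Psi_Y^2$ has no limit in $C_tH^{-\kappa}$. The oscillating term vanishes only weakly in time, e.g.\ in $L^2([-T,T])$, and even this needs a lattice-point argument controlling the coincidences $|n|=|n'|$. So in $L^p([-T,T];W^{-\kappa,p})$, the topology of $\mathcal Y$, you get $\Psi_Y^2\to\lambda/6$, not $\lambda/3$. Your construction therefore yields the effective mass $1+\lambda/2$, not $1+\lambda$.

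The qualitative statement of the theorem survives, since any nonzero shift changes the rate function. But your effective equation and the claimed $C_t$ convergence are wrong. The paper avoids the issue by also randomising the velocity, with an extra factor $\langle n\rangle$:
\[
 Q_1^{\eps,m}=\Big(\frac{m}{3M_{N_\eps}}\Big)^{1/2}\sum_{n\in\Lambda_{N_\eps}}\langle n\rangle\xi_{1,n}e^{in\cdot x}.
\]
Then the Fourier coefficients of $q^{\eps,m}=SQ^{\eps,m}$ are proportional to $\xi_{0,n}\cos(t\langle n\rangle)+\xi_{1,n}\sin(t\langle n\rangle)$, which are unit-variance Gaussians for every $t$. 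Consequently $\E[q^{\eps,m}(t,x)^2]=m/3$ identically, as in \eqref{eq:path-q-variance}. Only the centred fluctuation $q^2-m/3$ then has to be controlled in $L^p_tW^{-\kappa,p}$, via a second-chaos Fourier bound and Lemma~\ref{lem:convolution}(ii).

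Two smaller points. First, the hypercontractive tails you invoke for the mixed terms $\Psi_Y{:}(\Psi^\eps)^k{:}$ and $\Psi^\eps(\Psi_Y^2-c)$ require Gaussian data, and $X^\eps\sim\mu^\eps$ is not Gaussian. You must first reduce to $\gamma^\eps$ via the uniform $L^q$ bound on $d\mu^\eps/d\gamma^\eps$ (Lemma~\ref{lem:change-measure}), which preserves superexponential smallness. Second, there is no resonance obstruction: by independence and centring, every mixed product has mean zero. The only nonvanishing mean is $\E[\Psi_Y^2]$, which is exactly where the time dependence above enters.
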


In fact, Section~\ref{sec: pathological} constructs a continuum of such families $\{\overline\mu^{\eps,m}\}_{\eps>0}$, indexed by $m>0$, for which the trajectory rate function is generated by
\begin{equation}\label{eq:shifted-intro}
 \partial_t^2u+(1+m-\Delta)u+u^3=0.
\end{equation}
The construction relies on the perturbation of high-frequency Fourier modes, engineered to produce a perturbation which is $\mathcal{O}(1)$ in $\cH^0$, and hence vanishing in $\cH^{s}$ for any $s<0$. Further, the construction does not rely on a particular choice of which modes are perturbed: The perturbation can be supported in frequency space on any choice $\Lambda_N$ of finite, symmetric sets $\Lambda_N\subset\Z^2$ satisfying
\[
 \inf_{n\in\Lambda_N}|n|\ge N,
 \qquad
 |\Lambda_N|\gtrsim N^\theta, \qquad \theta>0.
\]
  The same construction also allows us to give a short proof of the discontinuity of the data-to-solution map, which we state in Theorem~\ref{prop:instability}.

\subsection{Literature Review and Discussion}

\paragraph*{\em Random-Data Dispersive Equations and Gibbs Dynamics}
The probabilistic study of dispersive equations with rough data and invariant Gibbs measures goes back to Bourgain's construction for the periodic nonlinear Schr\"odinger (NLS) {equation} \cite{Bourgain1994}.  Random-data Cauchy theory for supercritical wave equations was developed by Burq-Tzvetkov \cite{BurqTzvetkov2008I,BurqTzvetkov2008II}; see also \cite{BurqTzvetkov,CollianderOh} for further probabilistic well-posedness results for wave and Schr\"odinger equations.  For the two-dimensional Wick-ordered NLW \eqref{eq:wick-nlw}, the invariant measure and global probabilistic well-posedness were proven by Oh-Thomann \cite{OhThomann}. The viewpoint of the project, both in Theorems \ref{thm:main} - \ref{thm: bad LDP} above and Theorem \ref{prop:instability} below, was motivated by Oh-Okamoto-Tzvetkov \cite{OOT}, who studied the approximation property of the data-to-solution map $X\mapsto U$ by smooth functions. Indeed, an alternative and broadly equivalent approach to Theorem \ref{thm:main} is to show that the solution $U^{\eps,\delta}$ started from $X^{\eps,\delta}:=\kappa_\delta\star X^\eps$, which is shown to converge to $U^\eps$ as $\delta\to 0$ for arbitrary mollifications $\kappa$ in \cite[Theorem 1.6]{OOT}, becomes an exponential equivalence in the low-temperature limit $\eps\to 0$. This may be accomplished by applying Proposition \ref{prop:prototype_wiener_chaos_bound} to the bounds \cite[Lemma 4.4]{OhThomann}, and using Lemma \ref{lem:path-global-reconstruction}.  Related global dynamics for hyperbolic stochastic equations are developed in \cite{GubinelliKochOhTolomeo,oh2021two}.\\ \paragraph*{\em Large Deviations for Dispersive Equations with Random Initial Data}
Motivated by the modelling of rogue waves \cite{DematteisGrafkeVandenEijnden}, a body of recent work has investigated the large deviations of dispersive equations where, as in the current setting, the only source of randomness is the initial data.   We refer to \cite{GarridoGrandeKurianskiStaffilani,LiangWang2025,Grande2025} for {rigorous} results in the cubic Schr\"odinger equation in the weakly nonlinear regime. The limits considered in Theorems \ref{thm:main} - \ref{thm: bad LDP} are of a different form, as the nonlinearity is kept fixed and the temperature $\eps\to 0$. \\ \paragraph*{\em Low-Temperature Gibbs Measures and LDP of Singular SPDE}
{Static large deviations for singular Gibbs measures have been obtained for the $\Phi^4_2$ measure in infinite volume in \cite{barashkov2023variational}, for the $\Phi^4_3$ measure in \cite{KloseMayorcas}, and for the Yang--Mills measure in \cite{levy2006large}; we refer also to \cite{gess2024low} for a sharper low-temperature expansion of the Euclidean $\Phi^4_2$ measure.} The LDP of a Gross-Pitaevskii Gibbs measure was further derived in \cite{PackerSeongSosoe}. For invariant measures of stochastic reaction--diffusion systems, large deviations were established in \cite{cerrai2005large}, and an LDP for the stationary measure of wave equations driven by smooth white noise was proven by Martirosyan \cite{Martirosyan2017}. \\  Large deviations for small-noise reaction-diffusion equations were obtained in \cite{CerraiRoeckner2004}. Hairer-Weber \cite{HairerWeber} proved a large deviation principle for the renormalised dynamical $\Phi^4_d$ equation, $d\in\{2,3\}$, both directly for the renormalised equation and in the joint limit where noise intensity and correlation length vanish simultaneously. Large deviations in the latter scaling limit were also obtained in \cite{CerraiDebusschePhi,CerraiDebusscheNS}. \\The proofs of Theorems \ref{thm:main} - \ref{thm:path-fine-structure} follow particularly the strategy of \cite[Sections~3--4]{HairerWeber}, translated into the dispersive setting. As in \cite{HairerWeber}, the solution theory of \eqref{eq:wick-nlw} factors discontinuous solution map $X^\eps\to U^\eps$ into the canonical lift of Wick powers $X^\eps \mapsto \mathfrak Z^\eps$, followed by a continuous solution map $\Gamma_T$, using a dispersive analogue of the Da Prato-Debussche trick \cite{DaPratoDebussche2003}, see \cite[Remark 1.3]{OhThomann}, \cite{Bourgain1994}. It is therefore sufficient to prove a large deviation principle for the enhanced data $\mathfrak Z^\eps$, and Theorem \ref{thm:main} follows by the contraction principle.   \\ \paragraph*{\em Renormalisation Shifts and Anomalous Limits}
The same framework is also used to prove Theorem~\ref{thm: bad LDP}. Enlarging the probability space if necessary, we construct a finite shift of the initial data in high-frequency modes, vanishing in the limit $\eps\to 0$ in $\cH^s$ with superexponentially high probability. The shifts are engineered to create a finite, non-random shift of the enhanced data, which manifests as a finite shift of the mass. The mechanism is analogous to high-frequency perturbations which generate finite renormalisation corrections in singular SPDEs \cite[Section~2.4]{hairer2013solving}, \cite[Equations (1.7),(1.8)]{hairer2022support}, see also the Ansatz before \cite[Equation (1.18)]{hairer2022support}, where a distributionally small oscillatory perturbation changes an effective renormalisation constant. In contrast to those settings, \eqref{eq:wick-nlw} has no driving noise, and the shifts can only be introduced at the level of the initial data. In the large-deviations setting, \cite[Theorem~4.7, Remark~4.9]{HairerWeber} show how such shifts of renormalisation constants may appear in the rate function. \\ \paragraph*{\em Discontinuity, Norm Inflation and Triviality}
As discussed below Theorem \ref{thm:main}, the contrast between Theorems \ref{thm:main} - \ref{thm: bad LDP} arises as a result of the discontinuity of the data-to-solution map $X^\eps\to U^\eps$. In Section \ref{sec:instability}, we give a result Theorem~\ref{prop:instability} on this instability at fixed temperature, which follows as a result of the construction in Theorem \ref{thm: bad LDP}.  The discontinuity is weaker than the norm blow-up results in  \cite[Theorem~1.11]{OOT}, \cite{xia2021generic,oh2026probabilistic}, and, for deterministic low-regularity NLS, \cite{oh2017remark,guo2018non}.  The second part of Theorem \ref{prop:instability} further relates to {\em triviality} phenomena. For singular stochastic PDE, these have been studied in \cite{albeverio1996trivial,albeverio2001two,hairer2012triviality,oh2020remark,oh2021two}, and we refer to \cite[Section~6]{OhPocovnicuTzvetkov} and \cite[Section~1.3]{OOT} for a discussion of this phenomenon for dispersive equations.

\section{Preliminaries}
\subsection{Preliminaries on NLW}  For a temperature $\eps>0$, let $\gamma^\eps$ be the centred Gaussian measure induced by
\begin{equation}\label{eq:Gaussian-data}
 q_0^\eps=\sqrt\eps\sum_{n\in\Z^2}\frac{g_{0,n}}{\langle n\rangle}e^{in\cdot x},
 \qquad
 q_1^\eps=\sqrt\eps\sum_{n\in\Z^2}g_{1,n}e^{in\cdot x},
\end{equation}
where $\{g_{0,n},g_{1,n}\}_{n\in\mathbb Z^2}$ are standard complex Gaussian variables subject to the reality condition $g_{j,-n}=\overline{g_{j,n}}$; equivalently, after choosing one representative from each pair $\{n,-n\}$, the corresponding variables are independent and
{
\[
 \mathbb E[g_{j,n}\overline{g_{j,n'}}]=\delta_{n,n'},\qquad
 \mathbb E[g_{j,n}g_{j,n'}]=\delta_{n,-n'}.
\]}
The angle bracket is defined as $\langle n\rangle:=(1+|n|^2)^{1/2}$, and $\langle \nabla\rangle$ is the pseudodifferential operator $(1-\Delta)^{1/2}$ with symbol $\langle n\rangle$. For the projection $P_N$ onto modes of frequency ${|n|\le N}$, put
$$
 \sigma_{\eps,N}=\eps\sum_{\abs n\leq N}\langle n\rangle^{-2},
 \qquad H_4(x;\sigma)=x^4-6\sigma x^2+3\sigma^2.
$$
The low-temperature defocusing Gibbs measure is the limit of
\begin{equation}\label{eq:Gibbs-trunc}
 \dd\mu^\eps_N(u,v)
 =Z_{\eps,N}^{-1}
 \exp\left(-\frac1{4\eps}\int_{\T^2}H_4(P_Nu;\sigma_{\eps,N})\,\dd x\right)
 \dd\gamma^\eps(u,v).
\end{equation}
 For $\gamma^\eps$-almost every, or equivalently $\mu^\eps$-almost every, initial data $X^\eps$, there exists a globally defined, canonical solution ${U^\eps}$ to \eqref{eq:wick-nlw}.  
 
 As well as the regularity index $s$ defining the state space $\cH^s$, we fix $\frac12<s_0<1$, as well as {$0<\kappa=\kappa(s_0)<1$} given by \cite[Lemma 4.4]{OOT}, and put $p=2/\kappa$. For an interval $I\subset\mathbb R$, set
\begin{equation} \label{eq:enhanced_space}
 \mathcal Y(I)=\prod_{\ell=1}^3L^p(I;W^{-\kappa,p}(\mathbb T^2)),
 \qquad
 \scrE(I)=C(I;\mathcal H^s)\times\mathcal Y(I).
\end{equation}
{For any interval $I$ and Sobolev exponent $r$, we also write $\cX_I^r=C(I;\cH^r)$, and write a subscript $_t$ to shorten notation when the interval is $I=[-t,t]$. We identify a scalar wave $v$ with the phase-space path $(v,\partial_tv)$.}
\\ \\ For an initial datum $x$, valued in any $\cH^k$, let $z(t):=S(t)x$ be the linear wave solution $(\partial_t^2+1-\Delta)z=0$ started from $x$. The wave propagator $S(t)$ has the explicit form \begin{equation}\label{eq:propagator_0} S(t)(f,g)=\cos(t\langle \nabla \rangle)f+\frac{\sin(t\langle \nabla\rangle)}{\langle \nabla \rangle}g. \end{equation} Throughout, we will write $z^\eps$ for the linear wave solution started at $X^\eps$.\\\\ In Section \ref{sec:instability}, we will write $S_a$ for the same propagator, when the mass 1 is replaced by $1+a$, $a\ge 0$. The propagator $S$ above thus coincides with $S_0$.
\subsection{Preliminaries on LDP} We use the contraction principle in the following form.
\begin{lemma}[Effective Contraction Principle, {\cite[Lemma~3.3]{HairerWeber}}]
\label{lem:path-effective-contraction}
Let $E,F$ be Polish spaces. Suppose $Y^\eps$ satisfies a good LDP on $E$ with rate $I$, and let $G:E\to F$ be Borel measurable. Suppose that there is an open set $D\subset E$ containing $\{I<\infty\}$ such that $G|_D$ is continuous. Then $G(Y^\eps)$ satisfies a good LDP on $F$ with rate
\begin{equation}\label{eq:path-effective-contraction-rate}
 J(y)=\inf\{I(x):G(x)=y\}.
\end{equation}
\end{lemma}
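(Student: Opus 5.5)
The natural route is to combine the standard contraction principle with a localisation on $D$, using the goodness of $I$ to push compact sublevel sets strictly inside the open set $D$.

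\textbf{Well-definedness and goodness of $J$.} Fix $y\in F$ with $J(y)<\infty$. The infimum defining $J(y)$ is over the closed set $G^{-1}(y)\cap\{I\le J(y)+1\}$. This set is closed because $\{I\le a\}\subset D$ and $G|_D$ is continuous, so $G^{-1}(y)\cap\{I\le a\}$ is closed in the compact set $\{I\le a\}$. Lower semicontinuity of $I$ then shows the infimum is attained.

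For goodness, write $\{J\le a\}=G(\{I\le a\})$. This is the continuous image under $G|_D$ of a compact set, hence compact. In particular $J$ is lower semicontinuous.

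\textbf{Upper bound.} Let $A\subset F$ be closed and $a<\inf_A J$. Then $K_a:=\{I\le a\}$ is a compact subset of $D$ disjoint from $G^{-1}(A)$. Since $G|_D$ is continuous and $A$ is closed, $G^{-1}(A)\cap D$ is relatively closed in $D$. Hence each $x\in K_a$ has an open neighbourhood $V_x\subset D$ with $\overline{V_x}\subset D$ and $\overline{V_x}\cap G^{-1}(A)=\emptyset$. Here we use that $E$ is metric and take small balls, so that closedness in $D$ suffices.

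By compactness, cover $K_a$ by finitely many $V_{x_i}$ and set $V=\bigcup_i V_{x_i}$. Then $G^{-1}(A)\subset E\setminus V$, and $E\setminus V$ is closed and disjoint from $K_a$, so $\inf_{E\setminus V}I>a$. The LDP upper bound for $Y^\eps$ gives
\[
\limsup_{\eps\to0}\eps\log\mathbb P(G(Y^\eps)\in A)\le\limsup_{\eps\to0}\eps\log\mathbb P(Y^\eps\in E\setminus V)\le -a.
\]
Letting $a\uparrow\inf_A J$ yields the upper bound.

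\textbf{Lower bound.} Let $U\subset F$ be open and $y\in U$ with $J(y)<\infty$. Pick $x$ with $G(x)=y$ and $I(x)=J(y)$, so $x\in D$. Since $G|_D$ is continuous and $D$ is open, $G^{-1}(U)\cap D$ is open in $E$ and contains $x$. The LDP lower bound for $Y^\eps$ gives
\[
\liminf_{\eps\to0}\eps\log\mathbb P(G(Y^\eps)\in U)\ge -I(x)=-J(y).
\]
Optimising over $y\in U$ finishes the proof.

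The only delicate step is the upper bound. There $G^{-1}(A)$ need not be closed in $E$, because $G$ may be discontinuous off $D$. The finite-cover separation above is exactly what circumvents this.
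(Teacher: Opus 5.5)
Your proof is correct. The paper does not prove this lemma but imports it from Hairer--Weber, and your argument is the standard localisation of the Dembo--Zeitouni contraction principle that underlies that reference.

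One simplification: in the upper bound the finite cover and closures are unnecessary. The set $O:=(G|_D)^{-1}(F\setminus A)$ is already open in $E$, being relatively open in the open set $D$, and it contains $\{I\le a\}$ while missing $G^{-1}(A)$. Equivalently, $\overline{G^{-1}(A)}\subset G^{-1}(A)\cup(E\setminus D)$ and $I\equiv\infty$ off $D$, so the upper bound follows without even using goodness, and the step you call delicate is in fact immediate.
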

The following estimate allows us to derive superexponential bounds under $\mu^\eps$ from equivalent superexponential bounds under $\gamma^\eps$. 
\begin{lemma}[Gaussian-to-Gibbs transfer]\label{lem:change-measure}
For every $q>1$,
\begin{equation}\label{eq:density-Lq}
 \sup_{0<\eps\leq1} \eps \log
 \norm{\frac{\dd\mu^\eps}{\dd\gamma^\eps}}_{L^q(\gamma^\eps)}<\infty.
\end{equation}
Consequently, if events $A_{\eps,\delta}$ satisfy
\[
 \lim_{\delta\downarrow0}\limsup_{\eps\downarrow0}
 \eps\log\gamma^\eps(A_{\eps,\delta})=-\infty,
\]
then the same statement holds with $\gamma^\eps$ replaced by $\mu^\eps$.  The analogous assertion holds without the $\delta$-limit.
\end{lemma}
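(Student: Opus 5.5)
We prove the density bound \eqref{eq:density-Lq} first and then deduce the transfer of superexponential bounds from it by Hölder's inequality.

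\textbf{The density bound.} Write $R_N:=\frac14\int_{\T^2}H_4(P_Nu;\sigma_{\eps,N})\,\dd x$, so that $\dd\mu^\eps_N/\dd\gamma^\eps=Z_{\eps,N}^{-1}e^{-R_N/\eps}$.

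The key pointwise input is a lower bound on $R_N$. Completing the square,
\[
 H_4(x;\sigma)=(x^2-3\sigma)^2-6\sigma^2\ge-6\sigma^2 .
\]
Hence
\[
 R_N\ge -\tfrac{3}{2}\,|\T^2|\,\sigma_{\eps,N}^2 \sim -\eps^2(\log N)^2 .
\]
This bound diverges as $N\to\infty$, so it cannot be used directly. Instead we use Nelson's argument, scaled in $\eps$.

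\textbf{Rescaling.} Set $u=\sqrt\eps\,w$, where $w$ is distributed according to the $\eps=1$ Gaussian measure $\gamma^1$. By homogeneity of Hermite polynomials,
\[
 H_4(\sqrt\eps x;\eps\sigma)=\eps^2H_4(x;\sigma),
\]
and therefore
\[
 \frac{R_N}{\eps}=\eps\,\widetilde R_N(w),\qquad \widetilde R_N(w):=\tfrac14\int_{\T^2}H_4(P_Nw;\sigma_{1,N})\,\dd x .
\]
The density thus becomes $e^{-\eps\widetilde R_N(w)}$, and the density of the measure of interest is $Z_\eps^{-1}e^{-\eps \widetilde R(w)}$ under $\gamma^1$.

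We then claim:
\begin{enumerate}
\item[(i)] $Z_\eps\ge e^{-\eps\,\E\widetilde R}=1$ by Jensen, since $\E_{\gamma^1}\widetilde R=0$ (Wick powers are centred). So the normalisation only helps.
\item[(ii)] $\sup_{0<\eps\le1}\E_{\gamma^1}e^{-q\eps\widetilde R}<\infty$.
\end{enumerate}

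For (ii), note that for $\eps\le 1$ we have $e^{-q\eps \widetilde R}\le 1+e^{-q\widetilde R}$: if $\widetilde R\ge0$ the left side is at most $1$, while if $\widetilde R<0$ it is at most $e^{-q\widetilde R}$ since $q\eps|\widetilde R|\le q|\widetilde R|$. The expectation of $e^{-q\widetilde R}$ is finite by Nelson's estimate at fixed temperature. That estimate combines:
\begin{itemize}
\item the lower bound $\widetilde R_M\ge -c(\log M)^2$, and
\item the hypercontractive tail bound $\gamma^1\bigl(|\widetilde R-\widetilde R_M|>1\bigr)\le e^{-cM^{\alpha}}$, which follows from $\|\widetilde R-\widetilde R_M\|_{L^2}\lesssim M^{-\alpha}$ and fourth-chaos hypercontractivity.
\end{itemize}
Choosing $M$ according to the level of $-\widetilde R$ then gives $\E e^{-q\widetilde R}<\infty$.

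Combining (i) and (ii) yields $\|\dd\mu^\eps/\dd\gamma^\eps\|_{L^q}\le C_q$ uniformly in $\eps\le 1$. This is in fact stronger than the claimed $\eps\log\|\cdot\|\le C$. The limit $N\to\infty$ is handled by Fatou's lemma, or by the $L^q$ convergence of the densities.

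\textbf{The transfer of superexponential bounds.} By Hölder with conjugate exponent $q'$,
\[
 \mu^\eps(A)\le\Big\|\frac{\dd\mu^\eps}{\dd\gamma^\eps}\Big\|_{L^q(\gamma^\eps)}\gamma^\eps(A)^{1/q'} .
\]
Taking $\eps\log$ of both sides gives
\[
 \eps\log\mu^\eps(A_{\eps,\delta})\le \eps\log C_q+\tfrac1{q'}\,\eps\log\gamma^\eps(A_{\eps,\delta}).
\]
By \eqref{eq:density-Lq} the first term is bounded uniformly in $\eps\le1$, while the second term tends to $-\infty$ after taking $\limsup_\eps$ and then $\delta\downarrow0$. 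The version without the $\delta$-limit follows identically.

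\textbf{Main obstacle.} The main difficulty is step (ii): ensuring the Nelson bound is uniform in $\eps$. The rescaling reduces this to the fixed-temperature estimate, so the issue is settled by the elementary domination $e^{-q\eps\widetilde R}\le 1+e^{-q\widetilde R}$ for $\eps\le1$.
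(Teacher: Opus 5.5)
Your proposal is correct and follows the paper's proof essentially step for step. You rescale to unit temperature via Hermite homogeneity, obtain $Z_\eps\ge 1$ from Jensen, dominate $e^{-q\eps\mathcal R}\le 1+e^{-q\mathcal R}$ for $\eps\le 1$, use fixed-temperature exponential integrability, and conclude with H\"older. The only difference is that you sketch Nelson's argument for $\E_{\gamma^1}e^{-q\mathcal R}<\infty$, whereas the paper simply cites Proposition~1.1 of Oh--Thomann; in your sketch the fourth-chaos tail at level $1$ is really $e^{-cM^{\alpha/2}}$ rather than $e^{-cM^{\alpha}}$, which is harmless.
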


\begin{proof}
Let $D_\eps(f,g)=(\sqrt\eps f,\sqrt\eps g)$ and let
\[
 \mathcal R(f)=\frac14\int_{\T^2}:f^4:\,\dd x
\]
be the unit-temperature Wick interaction.  The Hermite scaling identity gives
\[
 \gamma^\eps=(D_\eps)_\#\gamma^1,
 \qquad
 \frac{\dd\mu^\eps}{\dd\gamma^\eps}(D_\eps X)=Z_\eps^{-1}e^{-\eps^{-1}\mathcal R(D_\eps(X))}
 =Z_\eps^{-1}e^{-\eps\mathcal R(X)}.
\]
Since $\E_{\gamma^1}\mathcal R=0$, Jensen's inequality gives $Z_\eps\geq1$.  Moreover, for $0<\eps\leq1$,
\[
 e^{-q\eps\mathcal R}\leq 1+e^{-q\mathcal R}.
\]
Proposition~1.1 of \cite{OhThomann} states that
$e^{-\mathcal R}\in L^r(\gamma^1)$ for every finite $r$, and hence
\[
 \norm{\frac{\dd\mu^\eps}{\dd\gamma^\eps}}_{L^q(\gamma^\eps)}^q
 \leq \E_{\gamma^1}[1+e^{-q\mathcal R}]<\infty,
\]
uniformly in $\eps$.  Letting $q'=q/(q-1)$ be the conjugate index, H\"older's inequality shows that, for any set $A$, $
 \mu^\eps(A)
 \leq C_q\gamma^\eps(A)^{1/q'},
$ which proves both asserted consequences.
\end{proof} 

Since we will use it repeatedly, we record the following tail bound for Wiener chaoses \cite[Theorem~4.1]{Borel84}, see also \cite[Equation~(3.5), Lemma 3.9]{HairerWeber}. \begin{proposition}\label{prop:prototype_wiener_chaos_bound}
	Let $(B,H, \mu)$ be an abstract Wiener space, let $E$ be a real, separable Banach space, and let $\Psi\in \oplus_{k\le \ell} H^k(E)$ {be an} element of the $E$-valued $\ell^{\rm th}$ (inhomogeneous) Wiener chaos. Then \[
 \mu\big(\xi: \norm{\Psi(\xi)}_E>r\big)
 \leq C_\ell\exp\left[-c_\ell
 \left(\frac r{\norm{\Psi}_{L^2(\Omega,E)}}\right)^{2/\ell}\right].
\]

\end{proposition}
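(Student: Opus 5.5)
The proof goes through moment bounds from Gaussian hypercontractivity, followed by Markov's inequality with an optimised moment. Write $\sigma:=\norm{\Psi}_{L^2(\Omega,E)}$ and decompose $\Psi=\sum_{k\le\ell}\Psi_k$ with $\Psi_k\in H^k(E)$ the homogeneous components. The target intermediate bound is
\begin{equation*}
\norm{\Psi}_{L^p(\Omega,E)}\le C_\ell\,(p-1)^{\ell/2}\,\sigma,\qquad p\ge2.
\end{equation*}
Given this, Markov's inequality yields $\mu(\norm{\Psi}_E>r)\le (C_\ell p^{\ell/2}\sigma/r)^p$. Choosing $p=c(r/\sigma)^{2/\ell}$ with $c$ small enough that $C_\ell p^{\ell/2}\sigma/r\le e^{-1}$ gives the bound $\exp(-c(r/\sigma)^{2/\ell})$ whenever this $p\ge2$. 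In the remaining range $r\lesssim\sigma$ the claim is trivial after enlarging $C_\ell$, since the right-hand side is then at least $1$.

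\textbf{Step 1: vector-valued hypercontractivity.} Let $P_t$ be the Ornstein--Uhlenbeck semigroup on $(B,H,\mu)$, acting on $E$-valued functions through the Mehler formula $P_tF(\xi)=\int F(e^{-t}\xi+\sqrt{1-e^{-2t}}\eta)\,\mu(\dd\eta)$. Since this is an average against a probability kernel, we have the pointwise bound $\norm{P_tF}_E\le P_t\norm{F}_E$. Nelson's scalar hypercontractivity then gives $\norm{P_tF}_{L^p(E)}\le\norm{F}_{L^2(E)}$ whenever $e^{2t}\ge p-1$. In particular $P_t$ is a contraction on $L^2(E)$ for every $t\ge0$, and $P_t\Psi_k=e^{-kt}\Psi_k$.

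\textbf{Step 2: inverting $P_t$ on the chaos.} Set $\Phi:=\sum_{k\le\ell}e^{kt}\Psi_k$, so that $\Psi=P_t\Phi$ and Step 1 gives $\norm{\Psi}_{L^p(E)}\le\norm{\Phi}_{L^2(E)}\le\sum_k e^{kt}\norm{\Psi_k}_{L^2(E)}$. For a Banach space $E$, orthogonality of the chaoses is not available, so the main obstacle is to show $\norm{\Psi_k}_{L^2(E)}\le C_\ell\sigma$.

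I would handle this by a Vandermonde argument. Fix distinct times $0\le s_0<\dots<s_\ell$. The identities $P_{s_j}\Psi=\sum_k e^{-ks_j}\Psi_k$ form an invertible linear system in the unknowns $\Psi_0,\dots,\Psi_\ell$. Hence each $\Psi_k$ is a fixed linear combination, with coefficients depending only on $\ell$, of the functions $P_{s_j}\Psi$. Each of these has $L^2(E)$-norm at most $\sigma$ by contractivity, so $\norm{\Psi_k}_{L^2(E)}\le C_\ell\sigma$.

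Taking $e^{t}=(p-1)^{1/2}$ then gives $\norm{\Psi}_{L^p(E)}\le C_\ell(\ell+1)(p-1)^{\ell/2}\sigma$. This is the moment bound above, and the optimisation in the first paragraph concludes the proof. As a fallback, one may instead cite Borell's Banach-valued chaos inequality \cite{Borel84} directly for the moment bound.
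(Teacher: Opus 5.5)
Your proof is correct, and it takes a genuinely different route: the paper gives no argument of its own and simply cites Borell's Banach-valued chaos inequality \cite[Theorem~4.1]{Borel84} (see also \cite{HairerWeber}).

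\textbf{How your argument works.} You derive the moment comparison $\|\Psi\|_{L^p(E)}\lesssim_\ell (p-1)^{\ell/2}\|\Psi\|_{L^2(E)}$ from Nelson's scalar hypercontractivity. You lift it to $E$-valued functions through the pointwise domination $\|P_tF\|_E\le P_t\|F\|_E$ coming from the Mehler formula. You then pass from moments to tails by Markov's inequality with $p\sim (r/\sigma)^{2/\ell}$.

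\textbf{The Banach-space step.} The only genuinely Banach-space-specific obstacle is that the chaos components are no longer orthogonal. Your Vandermonde inversion handles this cleanly: for fixed distinct times $s_j$, each $\Psi_k$ is a fixed linear combination of the $L^2(E)$-contractions $P_{s_j}\Psi$, so $\|\Psi_k\|_{L^2(E)}\le C_\ell\sigma$.

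\textbf{One point to justify.} The eigenrelation $P_t\Psi_k=e^{-kt}\Psi_k$ should be checked by testing against a countable norming family in $E^*$. This works because such functionals commute with the Bochner integral in the Mehler formula. If $H^k(E)$ is defined weakly, by testing against $E^*$, your Vandermonde formula in fact constructs the $E$-valued components $\Psi_k$ from $\Psi$.

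\textbf{Comparison.} Your route is self-contained modulo Nelson's theorem. It also produces the $L^p$--$L^2$ moment equivalence explicitly as an intermediate step. That is exactly the form the paper actually uses in the proof of Lemma~\ref{lemma:repeatable_chaos}, where the bound $\E|\cdot|^p\le C_pA^{p/2}$ is read off from the tail estimate. The citation buys brevity, and for the way the proposition is used in the paper the two routes yield the same conclusion.
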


We also use the following consequence of Varadhan's Lemma and Bryc's Theorem \cite[Theorems 4.3.1, 4.4.2]{DemboZeitouni}. \begin{proposition}[Transfer of LDP]\label{prop: transfer} Let $E$ be a metric space, and suppose, under a probability measure $\mathbb{P}$, that random variables $(X^\eps, Y^\eps)$ satisfy a large deviation principle in $E\times\mathbb{R}$ at speed $\eps^{-1}$ with good rate function $I(x,y)$.  Suppose that, for some $q>1$, \begin{equation}
	\label{eq: transfer hypothesis} \limsup_{\eps\to 0} \eps \log \mathbb{E}\left[\exp\left({-\eps^{-1} q Y^\eps}\right)\right]<\infty
\end{equation} and define probability measures $\mathbb{Q}^\eps$ by $\frac{d\mathbb{Q}^\eps}{d\mathbb{P}}\propto \exp(-\eps^{-1}Y^\eps)$. Then, under $\mathbb{Q}^\eps$, the random variables $X^\eps$ satisfy a large deviation principle with good rate function \begin{equation}
	\label{eq: transfer rate} I(x):=\inf_y \{I(x,y)+y\}-\inf_{x',y}\{I(x',y)+y\}.
\end{equation}
	
\end{proposition}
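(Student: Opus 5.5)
The statement to prove is Proposition \ref{prop: transfer}. I would prove it by combining Varadhan's lemma (for the upper bound and the normalisation) with a tilted-measure argument, in the spirit of Bryc's inverse Varadhan lemma.

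\textbf{Step 1: normalising constant.} Let $F(x,y)=-y$. This function is continuous on $E\times\mathbb R$. Hypothesis \eqref{eq: transfer hypothesis} is exactly the exponential moment condition in Varadhan's lemma \cite[Theorem 4.3.1]{DemboZeitouni}, with $\gamma=q>1$. Hence
\[
\lim_{\eps\to0}\eps\log\mathbb E\big[e^{-\eps^{-1}Y^\eps}\big]=-\inf_{x,y}\{I(x,y)+y\}=:-c .
\]
The constant $c$ is finite. It is bounded above because $I$ is somewhere finite. It is bounded below by the tail estimate coming from the moment bound: $I(x,y)\ge (q-1)(-y)-C$ for $y<0$, so $I+y\ge -C'$.

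\textbf{Step 2: LDP for the tilted pair.} Next I would show that $(X^\eps,Y^\eps)$ under $\mathbb Q^\eps$ satisfies an LDP with good rate $\tilde I(x,y)=I(x,y)+y-c$. The cleanest route is Bryc's theorem \cite[Theorem 4.4.2]{DemboZeitouni}. Take a bounded continuous $G$. Then $\mathbb E_{\mathbb Q^\eps}[e^{\eps^{-1}G}]$ is a ratio of two expectations under $\mathbb P$, with exponents $G-y$ and $-y$. The function $G-y$ still satisfies the moment condition with the same $q$, since $G$ is bounded. Applying Varadhan's lemma to numerator and denominator gives
\[
\Lambda(G)=\sup_{x,y}\{G(x,y)-I(x,y)-y\}+c=\sup\{G-\tilde I\}.
\]
Bryc's theorem requires exponential tightness under $\mathbb Q^\eps$. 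I would get it by Hölder's inequality: $\mathbb Q^\eps(K^c)\le e^{c/\eps+o(1/\eps)}\,\mathbb E[e^{-q y/\eps}]^{1/q}\,\mathbb P(K^c)^{1/q'}$. Exponential tightness under $\mathbb P$ is available because a good LDP holds and, in the Polish setting of the applications, this implies exponential tightness. Alternatively, the upper and lower bounds can be checked directly. For the lower bound on an open set, restrict to $\{y<M\}$ and use continuity of $y$. For the upper bound on closed sets, use Hölder exactly as above on the region $\{y<-M\}$, and bounded continuity on the remainder.

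\textbf{Step 3: contraction.} The projection $(x,y)\mapsto x$ is continuous, so the contraction principle gives the LDP for $X^\eps$ under $\mathbb Q^\eps$ with rate $\inf_y\tilde I(x,y)$. This is exactly \eqref{eq: transfer rate}. Goodness of $\inf_y\tilde I$ needs compactness of sublevel sets of $\tilde I$. These sets lie in $\{I\le a+c+|y|\}$, and $y$ is bounded there: above by $a+c-\inf I$, and below by the tail estimate from Step 1. So goodness of $I$ transfers.

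\textbf{Main obstacle.} I expect the main obstacle to be the unbounded tilt $-y$. Both goodness and the upper bound need control of the region $y\to-\infty$, and that control comes only from the $q>1$ moment hypothesis through the Hölder and tail arguments above. I would handle this first, by proving the bound $I(x,y)\ge -(q-1)y-C$ from \eqref{eq: transfer hypothesis} using the LDP lower bound.
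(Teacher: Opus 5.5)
This is essentially the paper's route: the paper gives no proof beyond citing Varadhan's lemma and Bryc's theorem \cite[Theorems~4.3.1, 4.4.2]{DemboZeitouni}, and your argument is sound, including the direct verification of the upper and lower bounds, which avoids needing exponential tightness when $E$ is only a metric space. One slip needs fixing: the LDP lower bound on $E\times(y-\delta,y+\delta)$, combined with Chebyshev and the $q$-th moment hypothesis, gives $I(x,y)\ge -qy-C$, i.e.\ $I(x,y)+y\ge (q-1)|y|-C$ for $y<0$. The bound you state, $I(x,y)\ge (q-1)|y|-C$, only gives $I+y\ge (q-2)|y|-C$, so when $q\le 2$ it bounds neither $c$ nor $y$ from below on the sublevel sets of $\tilde I$.
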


\section{Enhanced data and reconstruction}\label{sec:enhanced}

The first step towards Theorems \ref{thm:main} - \ref{thm: bad LDP} is to prove a large-deviation principle for enhanced data. Recalling the definition of $z^\eps=S X^\eps$, and on the canonical full-measure set on which the Wick powers exist, define
\begin{equation}\label{eq: enhanced data}
 \mathfrak Z^\eps
 =\big((z^\eps,\partial_tz^\eps),z^\eps,:(z^\eps)^2:_\eps,:(z^\eps)^3:_\eps\big).
\end{equation}
We now prove a large deviation principle for $\mathfrak Z^\eps$ in the enhanced data space $\mathscr E([-T,T])$. In the sequel we omit the subscript $\eps$ from the Wick powers when the covariance defining the renormalisation is clear from context. In order to specify the rate function, define $\mathfrak L_0:H^1\times L^2\to\scrE([-T,T])$ by
\begin{equation}\label{eq: define mathfrak L}
 \mathfrak L_0(h)=\big((Sh,\partial_tSh),Sh,(Sh)^2,(Sh)^3\big).
\end{equation}
Since $Sh(t)\in H^1$ for every time, the pointwise powers are well-defined and $\mathfrak L_0(h)\in\scrE([-T,T])$.

\begin{proposition}[LDP of Enhanced Data]\label{prop:enhanced-gaussian}
The laws of $\mathfrak Z^\eps$ satisfy an LDP on $\scrE([-T,T])$, with speed $\eps^{-1}$ and good rate function
\begin{equation}\label{eq:enhanced-rate}
 \mathbf I_0(\Xi)
 =\begin{cases}
 \cE(h),&\Xi=\mathfrak L_0(h),\quad h\in H^1\times L^2,\\
 +\infty,&\text{otherwise.}
 \end{cases}
\end{equation}
\end{proposition}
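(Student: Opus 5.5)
The plan is the Hairer--Weber route \cite[Sections~3--4]{HairerWeber}: prove an LDP for smooth truncations of the enhanced data, show these are exponentially good approximations of $\mathfrak Z^\eps$, and first work under the Gaussian measure $\gamma^\eps$. The Gibbs weight is brought in at the end.

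\textbf{Step 1 (truncated enhancement).} Fix $N$. Write $X^\eps=\sqrt\eps\,\xi$ with $\xi\sim\gamma^1$, and let $\mathfrak Z^\eps_N$ be the enhancement built from $P_NX^\eps$. Its Wick powers are taken with respect to $\sigma_{\eps,N}$, so
\[
:(z_N^\eps)^2:=(z_N^\eps)^2-\sigma_{\eps,N},\qquad :(z_N^\eps)^3:=(z_N^\eps)^3-3\sigma_{\eps,N}z_N^\eps .
\]
For fixed $N$ we have $\sigma_{\eps,N}=O(\eps)\to0$, and $P_N\cH^{s}$ is finite dimensional. Hence $\mathfrak Z^\eps_N=\mathfrak L_0^{N}(P_NX^\eps)+O(\eps)$, where $\mathfrak L_0^N$ is a continuous (polynomial) map on a finite-dimensional space. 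Schilder's theorem for $P_NX^\eps$ has rate $\cE_0(h)=\frac12\|h\|^2_{H^1\times L^2}$. Combined with the contraction principle (Lemma~\ref{lem:path-effective-contraction} with $D=E$), and an $O(\eps)$ deterministic shift, this gives an LDP for $\mathfrak Z^\eps_N$ under $\gamma^\eps$ with rate $\cE_0(h)$ on $\{\mathfrak L_0(P_Nh)\}$.

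\textbf{Step 2 (exponential approximation).} Show that for every $\eta>0$,
\[
\lim_{N\to\infty}\limsup_{\eps\to0}\eps\log\gamma^\eps\big(\|\mathfrak Z^\eps-\mathfrak Z^\eps_N\|_{\scrE}>\eta\big)=-\infty .
\]
Each component of $\mathfrak Z^\eps-\mathfrak Z^\eps_N$ is $\eps^{\ell/2}$ times an element of the inhomogeneous chaos of order $\ell\le3$ in $\xi$. The $\ell$-th component has $L^2(\Omega;E)$ norm at most $C\eps^{\ell/2}N^{-\alpha}$. For the $\mathcal Y$ components this follows from the standard Oh--Thomann/OOT computation of $\E\|\langle\nabla\rangle^{-\kappa}(:z^\ell:-:z_N^\ell:)\|_{L^p_tL^p_x}^p$ via hypercontractivity and the $\langle n\rangle^{-2}$ sums, which converge after the $-\kappa$ smoothing. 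For $C_t\cH^s$ one argues the same way, using Kolmogorov in time. Proposition~\ref{prop:prototype_wiener_chaos_bound} then gives the bound
\[
\exp\big(-c(\eta N^{\alpha}\eps^{-\ell/2})^{2/\ell}\big)=\exp\big(-c\eta^{2/\ell}N^{2\alpha/\ell}\eps^{-1}\big),
\]
which is superexponential as $N\to\infty$. The main obstacle is getting these moment bounds uniformly in the space $L^p_tW^{-\kappa,p}$ with the Wick renormalisation at level $\eps$. I expect this is mostly citable from \cite[Lemma~4.4]{OOT}, rescaled by $\eps^{\ell/2}$. One must also check that $\mathfrak L_0(P_Nh)\to\mathfrak L_0(h)$ uniformly on sublevel sets $\{\cE_0\le a\}$. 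This follows from Sobolev embedding $H^1\subset L^{6}$ and the continuity of the products on $C_tH^1$. With that, \cite[Theorem~4.2.16]{DemboZeitouni} yields the LDP for $\mathfrak Z^\eps$ under $\gamma^\eps$, with rate $\cE_0(h)$ on $\{\mathfrak L_0(h)\}$. Goodness follows because $\mathfrak L_0$ is continuous on compact sublevel sets of $\cE_0$.

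\textbf{Step 3 (Gibbs tilt).} Apply Proposition~\ref{prop: transfer} with $Y^\eps=\eps\mathcal R(\xi)=\frac14\int :(X^\eps)^4:_\eps$. This is $\eps^2$ times a fourth-chaos variable, but it is not a continuous function of $\mathfrak Z^\eps$. I will therefore obtain the joint LDP of $(\mathfrak Z^\eps,Y^\eps)$ by the same truncation argument with $Y^\eps_N=\frac14\int H_4(P_NX^\eps;\sigma_{\eps,N})$. Its limit on the skeleton is $\frac14\int h_0^4$, and its exponential approximation again follows from Proposition~\ref{prop:prototype_wiener_chaos_bound} with $\ell=4$. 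The moment hypothesis \eqref{eq: transfer hypothesis} is exactly \eqref{eq:density-Lq} of Lemma~\ref{lem:change-measure}. The resulting rate is
\[
\cE_0(h)+\frac14\int h_0^4-\inf\Big(\cE_0+\frac14\int h_0^4\Big)=\cE(h),
\]
since the infimum is $0$, attained at $h=0$. This gives $\mathbf I_0$ as claimed.
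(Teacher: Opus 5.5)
Your proposal is correct and has the same architecture as the paper's proof. You prove an LDP for the Gaussian enhanced data jointly with $Y^\eps=\frac14\int{:}(Q_0^\eps)^4{:}_\eps$, then apply the Gibbs tilt of Proposition~\ref{prop: transfer}, and the computation of the tilted rate and normalising constant is the same. The difference is in the Gaussian step. The paper gets the joint LDP in one line from \cite[Theorem~3.5]{HairerWeber}, which covers any finite family $(\eps^{\ell/2}\Psi_\ell)$ of homogeneous chaos elements lying in $L^2(\Omega;E)$. Its only model-specific input is the integrability of ${:}(\bar z^1)^\ell{:}$ in $\mathcal Y([-T,T])$ from \cite[Proposition~4.2]{OhThomann}, and the rate is read off by evaluating the homogeneous chaoses on the Cameron--Martin space $H^1\times L^2$. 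You instead re-derive that theorem in this special case, via Fourier truncation, finite-dimensional Schilder and exponentially good approximations. This is self-contained and shows concretely why the Wick constants $\sigma_{\eps,N}=O_N(\eps)$ drop out of the skeleton. The price is two verifications that the citation hides. First, the truncations must converge in $L^2(\Omega;E)$; no polynomial rate $N^{-\alpha}$ is needed, since any $\delta_N\to0$ already gives $\exp(-c\,\eta^{2/\ell}\delta_N^{-2/\ell}\eps^{-1})$. Second, the truncated skeletons must converge uniformly on sublevel sets.

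Your justification for the second point does not work as written. Continuity of products on $C_tH^1$ gives only pointwise convergence, because $\sup_{\|h\|_{H^1\times L^2}\le R}\|(1-P_N)h\|_{H^1\times L^2}=R$ for every $N$. Give up a little regularity instead. The bound $\|(1-P_N)h\|_{\cH^{1-\delta}}\le N^{-\delta}\|h\|_{\cH^1}$, together with $H^{1-\delta}(\T^2)\subset L^{2/\delta}$, gives $\sup_{\cE_0(h)\le a}\|\mathfrak L_0(P_Nh)-\mathfrak L_0(h)\|_{\scrE}\to0$, and likewise for $\frac14\int(P_Nh_0)^4$. This is exactly the hypothesis of the extended contraction principle \cite[Theorem~4.2.23]{DemboZeitouni}, which is a more direct reference than Theorem~4.2.16.

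There are also three smaller slips:
\begin{itemize}
\item $\frac14\int{:}(Q_0^\eps)^4{:}_\eps=\eps^2\mathcal R(\xi)$, not $\eps\mathcal R(\xi)$. It is $\eps^{-1}Y^\eps$ that equals $\eps\mathcal R(\xi)$, which matches your later remark about $\eps^2$ times a fourth-chaos variable.
\item The correction $-3\sigma_{\eps,N}z_N^\eps$ is random, not a deterministic shift. It is still superexponentially negligible at fixed $N$, so the argument survives.
\item No Kolmogorov argument is needed for the $C([-T,T];\cH^s)$ component. Since $S(t)$ is an isometry of $\cH^s$, $\sup_t\|S(t)(1-P_N)X^\eps\|_{\cH^s}=\|(1-P_N)X^\eps\|_{\cH^s}$.
\end{itemize}
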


\begin{proof}
We divide into steps. First, we prove the corresponding statement for data from the low-temperature Gaussian free field $\gamma^\eps$, and then transfer it to the Gibbs measure $\mu^\eps$.
\step{1. LDP for GFF Enhanced Data}
Consider initial data $\bar X^\eps=(\bar Q_0^\eps,\bar Q_1^\eps)\sim\gamma^\eps$. {Let $\bar z^\eps=S\bar X^\eps$ and denote by $\bar{\mathfrak Z}^\eps$ the enhancement defined by \eqref{eq: enhanced data} with $z^\eps$ replaced by $\bar z^\eps$.} The phase-space linear wave $(\bar z^1,\partial_t\bar z^1)$ belongs to the first homogeneous Wiener chaos of $\bar X^1$, while $:(\bar z^1)^\ell:$ belongs to the $\ell^{\rm th}$ homogeneous Wiener chaos for $\ell=2,3$, and
\[
 \bar Y^1:=\frac14\int_{\mathbb T^2}:(\bar Q_0^1)^4:\,dx
\]
belongs to the fourth homogeneous Wiener chaos. The estimates in \cite[Proposition 4.2]{OhThomann} {show} that, for each $\ell=2,3$, $:(\bar{z}^1)^\ell: \in {L^2(\gamma^\eps, \mathcal Y([-T,T]))}$.
 {For every $\eps>0$, set
\[
 \bar Y^\eps:=\frac14\int_{\mathbb T^2}:(\bar Q_0^\eps)^4:_\eps\,dx
\]
which is the exponent in the density $d\mu^\eps/d\gamma^\eps\propto e^{-\bar Y^\eps/\eps}$. }The scalings
\[
\begin{aligned}
 &((\bar z^\eps,\partial_t\bar z^\eps),(: (\bar z^\eps)^\ell:)_{1\le\ell\le3},\bar Y^\eps)\\
 &\qquad =_{\rm Law}
 (\eps^{1/2}(\bar z^1,\partial_t\bar z^1),
 (\eps^{\ell/2}:(\bar z^1)^\ell:)_{1\le\ell\le3},\eps^2\bar Y^1).
\end{aligned}
\]
put the tuple exactly in the setting of \cite[Theorem~3.5]{HairerWeber}. Evaluation of the homogeneous chaoses on elements $h=(f,g)\in H^1\times L^2$ of the Cameron-Martin space of $\gamma^1$ gives
\[
 \left(\mathfrak L_0(h),\frac14\int_{\mathbb T^2}f^4(x)\,dx\right).
\]
It follows that $(\bar{\mathfrak Z}^\eps,\bar Y^\eps)$ satisfies an LDP in $\scrE([-T,T])\times\mathbb R$ with good rate function
\begin{equation}\label{eq: enhanced rate GFF}
\begin{aligned}
 \mathbf I_{\rm G}(\Xi,y)
 =\inf\Big\{&\frac12\|h\|_{H^1\times L^2}^2:\
 \Xi=\mathfrak L_0(h),
 &y=\frac14\int_{\mathbb T^2}f^4(x)\,dx,\quad h=(f,g)\Big\}.
\end{aligned}
\end{equation}
\step{2. Transfer to the Gibbs Measure}
We have
\[
 \frac{d\mu^\eps}{d\gamma^\eps}\propto
 \exp(-\eps^{-1}\bar Y^\eps).
\]
The hypothesis \eqref{eq: transfer hypothesis} is verified by Lemma~\ref{lem:change-measure}, and we may therefore apply Proposition~\ref{prop: transfer} to \eqref{eq: enhanced rate GFF}. To deduce the rate function, if $\Xi=\mathfrak L_0(h)$, $h=(f,g)$, then

\[
 \inf_y\{\mathbf I_{\rm G}(\Xi,y)+y\}
 =\frac12\|h\|_{H^1\times L^2}^2+\frac14\int_{\mathbb T^2}f^4\,dx
 =\cE(h)
\] whereas, if $\Xi \not \in \mathfrak{L}_0(H^1\times L^2)$, the corresponding set is empty and the infimum is infinite. The normalising constant in \eqref{eq: transfer rate} is $ \inf_{h\in H^1\times L^2}\cE(h)=0$, producing the claimed rate function \eqref{eq:enhanced-rate}.
\end{proof}

For $\Xi=(\mathbf z,f_1,f_2,f_3)\in\scrE([-T,T])$, consider the remainder equation
\begin{align}
 \partial_t^2v+(1-\Delta)v+v^3+3f_1v^2+3f_2v+f_3&=0,\label{eq:path-remainder}\\
 (v,\partial_tv)|_{t=0}&=(0,0).\label{eq: path-remainder-initial-cond}
\end{align}
For $s_0\in(1/2,1)$ and $\Xi\in\scrE([-T,T])$, a solution exists locally by \cite[Section 3]{OhThomann}, \cite[Lemma~4.4]{OOT}, and is unique as long as it is defined. As throughout, a scalar solution is identified with its phase-space path. For the Gibbs enhanced data $\mathfrak Z^\eps$, the solution is global and the canonical solution of \eqref{eq:wick-nlw} is $U^\eps:=(z^\eps+v,\partial_tz^\eps+\partial_tv)$.

Before giving the statement of the continuous dependence $\mathfrak Z^\eps \to U^\eps$, it is useful to introduce further objects which will be used in Theorem \ref{thm:path-fine-structure} and hence give a single, unified statement. For $m\ge0$, define the map $\mathcal T_m: \scrE([-T,T])\to \scrE([-T,T])$ by
\begin{equation}\label{eq:path-T-kappa-first}
 \mathcal T_m(\mathbf z,f_1,f_2,f_3)
 =\left(\mathbf z,f_1,f_2+\frac m3,f_3+mf_1\right),
\end{equation}
and the shifted version of \eqref{eq: define mathfrak L} given by
\begin{equation}\label{eq: define mathfrak Lm}
 \mathfrak L_m(h)=\mathcal T_m(\mathfrak L_0(h)).
\end{equation}
Define
\begin{equation}\label{eq:enhanced-rate-m}
 \mathbf I_m(\Xi)=
 \begin{cases}
 \cE(h),&\Xi=\mathfrak L_m(h),\quad h\in H^1\times L^2,\\
 +\infty,&\text{otherwise.}
 \end{cases}
\end{equation}
and let $\Phi_T^m$ denote the {energy} solution map for
\begin{equation}\label{eq:classical-nlw-shifted-mass}
 \partial_t^2u+(1+m-\Delta)u+u^3=0,
 \qquad (u,\partial_tu)|_{t=0}=(f,g)\in H^1\times L^2.
\end{equation}
Let $\mathscr D_T\subset\scrE([-T,T])$ be the set of $\Xi$ for which the solution to \eqref{eq:path-remainder}--\eqref{eq: path-remainder-initial-cond} exists on all of $[-T,T]$, and, for $\Xi\in\mathscr D_T$, define
\begin{equation}\label{eq:path-reconstruction}
 \Gamma_T(\Xi)={\mathbf z+(v,\partial_tv)}\in\mathcal X_T^s.
\end{equation}
The result we will use is the following.
\begin{lemma}[Global reconstruction on the effective domain]\label{lem:path-global-reconstruction}
The set $\mathscr D_T$ is open in $\scrE([-T,T])$, and $\Gamma_T$ is continuous on $\mathscr D_T$. Moreover, for every $m\ge0$,
\begin{equation}\label{eq:path-effective-domain-in-D}
 \{\mathbf I_m<\infty\}\subset\mathscr D_T,
\end{equation}
and, for $h\in H^1\times L^2$,
\begin{equation}\label{eq:path-reconstruction-skeleton}
 \Gamma_T(\mathfrak L_m(h))=\Phi_T^m(h).
\end{equation}
\end{lemma}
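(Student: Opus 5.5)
The plan has three parts, in order: the algebraic identity \eqref{eq:path-reconstruction-skeleton}, the inclusion \eqref{eq:path-effective-domain-in-D}, and then openness of $\mathscr D_T$ and continuity of $\Gamma_T$, which come from the local theory plus a stability argument.

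\textbf{The skeleton identity.} Fix $h=(f,g)\in H^1\times L^2$, write $z=Sh$, and take $\Xi=\mathfrak L_m(h)=\big((z,\partial_tz),z,z^2+\tfrac m3,z^3+mz\big)$. For $v$ solving \eqref{eq:path-remainder}, set $u=z+v$. Since $(\partial_t^2+1-\Delta)z=0$, we get
\begin{equation*}
(\partial_t^2+1-\Delta)u+v^3+3zv^2+3z^2v+z^3+m(v+z)=(\partial_t^2+1+m-\Delta)u+u^3 .
\end{equation*}
So $u$ solves \eqref{eq:classical-nlw-shifted-mass} with data $h$ whenever $v$ solves the remainder equation, and conversely.

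Uniqueness of the remainder solution in the $s_0$-class of \cite[Lemma~4.4]{OOT} then identifies $u$ with the energy solution $\Phi^m_T(h)$. For this I need that $\Phi^m_T(h)-Sh$ lies in that class on $[-T,T]$. This holds because $\Phi^m_T(h)\in C([-T,T];H^1\times L^2)$ by global energy conservation for the defocusing equation in $H^1$. By Sobolev embedding and Duhamel, $\Phi^m_T(h)-Sh\in C(\cH^1)\subset C(\cH^{s_0})$, and it also satisfies the Strichartz-type norms of the local theory.

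\textbf{The inclusion.} Solving \eqref{eq:path-remainder} with $v=\Phi^m_T(h)-Sh$ gives a solution on all of $[-T,T]$. Hence $\mathfrak L_m(h)\in\mathscr D_T$, which is \eqref{eq:path-effective-domain-in-D}, since $\{\mathbf I_m<\infty\}=\mathfrak L_m(H^1\times L^2)$.

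\textbf{Openness and continuity.} I would use the local theory in its quantitative form. For data $\Xi$ with $\|\Xi\|_{\scrE(I)}\le R$ and remainder initial data of size $\le R$ in $\cH^{s_0}$, \eqref{eq:path-remainder} has a unique solution on an interval of length $\tau(R)>0$. The solution map $(\Xi,(v_0,v_1))\mapsto v$ is Lipschitz there, into $C(I;\cH^{s_0})$ intersected with the Strichartz space. This is a contraction argument that treats the $f_\ell$ terms via the product estimates in $W^{-\kappa,p}$.

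Now fix $\Xi\in\mathscr D_T$ with solution $v$. Let $R$ exceed $1+\|\Xi\|_{\scrE}+\sup_t\|v(t)\|_{\cH^{s_0}}$, and cover $[-T,T]$ by finitely many intervals of length $\tau(2R)$. Iterating the local Lipschitz bound interval by interval gives the following: for $\Xi'$ sufficiently close to $\Xi$, the solution $v'$ exists on each successive interval, and stays within distance $\le C^k\|\Xi-\Xi'\|$ of $v$ after $k$ steps, hence below $2R$. This yields both openness of $\mathscr D_T$ and continuity of $\Xi\mapsto v$ into $C([-T,T];\cH^{s_0})$. Since $\mathbf z$ is a component of $\Xi$, continuity of $\Gamma_T$ into $\cX^s_T$ follows.

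\textbf{Main obstacle.} The delicate point is making the iteration uniform: the local time must depend only on norms of $\Xi$ restricted to subintervals and on the $\cH^{s_0}$ size of restarted data, not on $\Xi$ itself. The $L^p_t$ norms localise without trouble, so I would restate \cite[Lemma~4.4]{OOT} with nonzero initial data in $\cH^{s_0}$ and track its constants.
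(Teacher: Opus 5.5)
Your proposal is correct and follows essentially the same route as the paper. The skeleton identity and the inclusion come from the substitution $u=Sh+v$, energy conservation for the defocusing shifted-mass equation, and uniqueness of the remainder solution. Openness and continuity come from a local contraction plus an interval-by-interval stability iteration over a finite partition of $[-T,T]$.

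The only difference is in how the uniform local step is obtained, and both versions work. The paper chooses the partition by absolute continuity of $t\mapsto\|\mathbf f\|_{\mathcal Y([0,\pm t])}^p$, so that $\|\mathbf f'\|_{\mathcal Y(I)}<\eta$ on each piece, and tracks continuity with nested tolerances $\delta_j$. You instead use a norm bound $R$ with a step $\tau(R)$ coming from the Hölder gain $|I|^{1-1/p}$ (available since $p<\infty$), and a Lipschitz bound $C^k\|\Xi-\Xi'\|$.
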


\begin{proof}
We divide into steps.
\step{1. Openness of $\mathscr D_T$ \& Continuity of $\Gamma_T$}
Given $\Xi\in\mathscr D_T$, let $v$ be the global solution to \eqref{eq:path-remainder}, and pick
\[
 R>\|v\|_{C([-T,T];H^{s_0})}+\|\partial_tv\|_{C([-T,T];H^{s_0-1})}.
\]
Following the estimates \cite[Theorem~4.1]{GubinelliKochOhTolomeo}, \cite[Equations~(4.19)--(4.22)]{OOT} in the existence proof, there exist $\eta=\eta(R)>0$ and $\tau=\tau(R)>0$ such that, on any interval $I=[t_0,t_1]$ of length $|I|\le\tau$, for $\Xi'=(\mathbf z',\mathbf f')\in\scrE(I)$ with $\|\mathbf f'\|_{\mathcal Y(I)}<\eta$, and for phase-space data $u\in\mathcal H^{s_0}$ with $\|u\|_{\mathcal H^{s_0}}\le R$, the fixed-point map
\begin{equation}\label{eq: fixed point map}
\begin{split}
 \Psi_{I,u,\Xi'}(w)(t)
 &=S(t-t_0)u\\
 &\quad-\int_{t_0}^t\frac{\sin((t-t')\langle\nabla\rangle)}{\langle\nabla\rangle}
 (w^3+3f'_1w^2+3f'_2w+f'_3)(t')\,dt'
\end{split}
\end{equation}
defines a contraction on the ball $B_{I,R}:=\{w:\|w\|_{\cX_I^{s_0}}\le R+1\}$, which contains $v|_I$. The map $(u,\Xi')\mapsto\Psi_{I,u,\Xi'}$ is continuous from $\mathcal H^{s_0}\times\mathcal Y(I)$ into the uniform topology of maps $B_{I,R}\to B_{I,R}$. Since the contractions have a common contraction constant strictly less than one after decreasing $\tau$ and $\eta$ if necessary, the fixed point depends continuously on $(u,\Xi')$. Consequently, the solution map to \eqref{eq:path-remainder} is continuous from $$\{\Xi'\in {\scrE}(I), u\in \cH^{s_0}: \|\mathbf{f}'\|_{\mathcal{Y}(I)}\le \eta, \|u\|_{\cH^{s_0}}\le R\} $$ into the topology of $\cX^{s_0}_I$.
For the given $\Xi$, absolute continuity of the maps $t\mapsto\|\mathbf f\|_{\mathcal Y([0,\pm t])}^p$ gives a finite partition
\[
 [-T,T]=\bigcup_{j=0}^{J-1}[t_j,t_{j+1}],\qquad t_k=0,
\]
with $|t_{j+1}-t_j|\le\tau$ and $\|\mathbf f\|_{\mathcal Y([t_j,t_{j+1}])}<\eta/2$ for every $j$. By continuity of restriction in $\scrE([-T,T])$, there is an open neighbourhood $\mathscr U_J$ of $\Xi$ such that every $\Xi'\in\mathscr U_J$ satisfies $\|\mathbf f'\|_{\mathcal Y([t_j,t_{j+1}])}<\eta$ on every interval of the partition.

Fix {$\delta>0$, and choose }${0<\delta_J<\delta}$ with
\[
 \|v\|_{C([-T,T];H^{s_0})}+\|\partial_tv\|_{C([-T,T];H^{s_0-1})}+\delta_J<R.
\]
By local continuous dependence on $[t_{J-1},T]$, there exist an open neighbourhood $\mathscr V_J$ of $\Xi|_{[t_{J-1},T]}$ in $\scrE([t_{J-1},T])$ and $\delta_{J-1}\in(0,\delta_J)$ such that, whenever the phase-space restart datum $u$ satisfies
\[
 \|u-(v(t_{J-1}),\partial_tv(t_{J-1}))\|_{\mathcal H^{s_0}}<\delta_{J-1}
\]
and $\Xi'|_{[t_{J-1},T]}\in\mathscr V_J$, the corresponding solution exists on $[t_{J-1},T]$ and differs from $v$ there by less than $\delta_J$ in $\cX_{[t_{J-1},T]}^{s_0}$. Set
\[
 \mathscr U_{J-1}=\mathscr U_J\cap\{\Xi':\Xi'|_{[t_{J-1},T]}\in\mathscr V_J\}.
\]
Iterating backwards over the finitely many intervals gives open neighbourhoods
\[
 \mathscr U_J\supset\mathscr U_{J-1}\supset\cdots\supset\mathscr U_k\ni\Xi
\]
and numbers $\delta_J>\delta_{J-1}>\cdots>\delta_k>0$ such that every $\Xi'\in\mathscr U_k$ produces a solution $v'$ on $[0,T]$ from the prescribed zero remainder data, satisfying $\|v'-v\|_{{\cX_{[0,T]}^{s_0}}}<\delta$. Repeating the same construction forward from $-T$ to $0$ produces another open neighbourhood $\mathscr U_k'$ with the analogous property on $[-T,0]$. Consequently
\[
 \Xi\in\mathscr U_k\cap\mathscr U_k'\subset\mathscr D_T
\]
 and the solution map $\Xi'\mapsto v$ is continuous at $\Xi$. Since the linear component ${\mathbf z}$ is immediately continuous on $\mathscr E([-T,T])$, the continuity of $\Gamma_T$ follows.

\step{2. Global Existence for Finite Rate Data and Reconstruction Identity}
Let $h=(f,g)\in H^1\times L^2$ and $w=Sh$. Taking $\Xi=\mathfrak L_m(h)$, equation \eqref{eq:path-remainder} becomes
\[
 (\partial_t^2+1-\Delta)v+v^3+3wv^2+3(w^2+\tfrac13m)v+w^3+mw=0.
\]
Since $(\partial_t^2+1-\Delta)w=0$, $u=w+v$ solves \eqref{eq:classical-nlw-shifted-mass}. The latter is defocusing and energy-subcritical, and conservation of
\[
 \cE_m(u,\partial_tu)=\cE(u,\partial_tu)+\frac m2\|u\|_{L^2}^2
\]
gives a global solution. Hence $\mathfrak L_m(h)\in\mathscr D_T$, proving \eqref{eq:path-effective-domain-in-D}; \eqref{eq:path-reconstruction-skeleton} follows by uniqueness.
\end{proof}

{It is convenient to define the Borel extension
\begin{equation}\label{eq:extended-reconstruction}
 \widehat\Gamma_T(\Xi)=
 \begin{cases}
 \Gamma_T(\Xi),&\Xi\in\mathscr D_T,\\
 0,&\Xi\notin\mathscr D_T.
 \end{cases}
\end{equation}
Since $\mathscr D_T$ is open and $\Gamma_T$ is continuous there, $\widehat\Gamma_T$ is Borel measurable and its restriction to the open set $\mathscr D_T$ is continuous.}

\section{Proof of Theorem~\ref{thm:main}}\label{sec:pf_main}

{
\begin{proof}[Proof of Theorem~\ref{thm:main}]
To obtain the static LDP for $X^\eps$, the evaluation map
\begin{equation}\label{eq:eval_map}
 \pi_0:\scrE([-T,T])\to\mathcal H^s,\qquad
 \pi_0(\mathbf z,f_1,f_2,f_3)={\mathbf z(0)},
\end{equation}
satisfies $\pi_0(\mathfrak Z^\eps)=X^\eps$ and $\pi_0(\mathfrak L_0(h))=h$. The contraction principle applied to Proposition~\ref{prop:enhanced-gaussian} therefore produces the claimed LDP with rate $\cE$. For the trajectory LDP of the canonical solution $U^\eps$, the construction of the canonical solution is precisely, in our notation,
\[
 U^\eps=\widehat\Gamma_T(\mathfrak Z^\eps)
\]
for $\mu^\eps$-almost every initial datum. By Lemma~\ref{lem:path-global-reconstruction}, the open set $\mathscr D_T$ contains $\{\mathbf I_0<\infty\}$ and $\widehat\Gamma_T|_{\mathscr D_T}=\Gamma_T$ is continuous. The contraction principle Lemma~\ref{lem:path-effective-contraction} therefore applies to the rate function $\mathbf{I}_0$ found in Proposition~\ref{prop:enhanced-gaussian}, and shows that $U^\eps$ satisfies an LDP with good rate function
\[
 \widetilde J(u)=\inf\{\mathbf I_0(\Xi):\widehat\Gamma_T(\Xi)=u\}
\]
which is readily seen to coincide with the claimed rate function \eqref{eq:dynamic-rate} using the identity \eqref{eq:path-reconstruction-skeleton}.\end{proof}
}

\section{Proof of Theorem \ref{thm: bad LDP}}\label{sec: pathological}

We fix, for the remainder of this section, a mass defect $m\in(0,\infty)$.
Let $\{\Lambda_N\}_{N\ge1}$ be a family of finite subsets of $\Z^2$ satisfying
\begin{equation}\label{eq:path-Lambda-assumptions}
 \Lambda_N=-\Lambda_N,\qquad
 \inf_{n\in\Lambda_N}|n|\ge N,\qquad
 M_N:=|\Lambda_N|\gtrsim N^\theta
\end{equation}
for some $\theta>0$.  For concreteness, the $\Lambda_N$ may be taken to be the annulus
\[
 \Lambda_N=\{n\in\Z^2:N\le |n|<2N\}
\]
with $\theta=2$. We take $N=N_\eps\uparrow\infty$ sufficiently fast that
\begin{equation}\label{eq:path-superfast-N}
 N_\eps^{-1}=o(\eps^k)\qquad\text{for every }k>0.
\end{equation}On an enlargement of the underlying probability space, let
$\{\xi_{0,n},\xi_{1,n}\}_{n\in\Lambda_{N_\eps}}$ be standard complex
Gaussian variables with the same reality convention as in \eqref{eq:Gaussian-data}, and independent of $X^\eps$.  Throughout, we will write $\mathbb P$ and $\mathbb E$ for the probability and expectation of this enlarged space; for objects $X^\eps, U^\eps$ defined already under the original Gibbs measure, these may be equivalently replaced by the Gibbs measure and its expectation.  Put
\begin{equation}\label{eq:path-UV-data}
\begin{aligned}
 Q_0^{\eps,m}
 &=\Big(\frac{m}{3M_{N_\eps}}\Big)^{1/2}
 \sum_{n\in\Lambda_{N_\eps}}\xi_{0,n}e^{in\cdot x},\\
 Q_1^{\eps,m}
 &=\Big(\frac{m}{3M_{N_\eps}}\Big)^{1/2}
 \sum_{n\in\Lambda_{N_\eps}}\langle n\rangle\xi_{1,n}e^{in\cdot x}.
\end{aligned}
\end{equation}
Write $Q^{\eps,m}=(Q_0^{\eps,m},Q_1^{\eps,m})$ and
$q^{\eps,m}=SQ^{\eps,m}$.  The normalisation is chosen so that
\begin{equation}\label{eq:path-q-variance}
 \E[q^{\eps,m}(t,x)^2]=\frac m 3
 \qquad (t,x)\in\R\times\T^2.
\end{equation}
Define
\begin{equation}\label{eq:path-hat-mu}
 \overline X^{\eps,m}=X^\eps+Q^{\eps,m},
 \qquad
 \overline\mu^{\eps,m}=\Law(\overline X^{\eps,m}).
\end{equation}
{First, we note that $\overline\mu^{\eps,m}\ll\gamma^\eps$, since {the finite-dimensional subspace}
\[
 {E_N:=\operatorname{span}\{(e_n,0),(0,e_n):n\in\Lambda_N\}\subset\cH^1}
\]
{supporting $Q^{\eps,m}$ lies in the Cameron-Martin space of $\gamma^\eps$.} Hence, for every deterministic $q \in {E_N}$, writing $\tau_q: \cH^s\to \cH^s$ for the shift $x\mapsto x-q$, we have
\[
 (\tau_q)_\#\mu^\eps\ll(\tau_q)_\#\gamma^\eps\ll\gamma^\eps.
\]
}
In particular, the canonical solution map is defined for $\overline\mu^{\eps,m}$-every initial datum $\overline X^{\eps,m}$, and we let $\overline U^{\eps,m}$ be the canonical solution of
\eqref{eq:wick-nlw}. We will prove the following for any $m>0$.

\begin{theorem}\label{thm:path-fine-structure}
For every $m>0$, the laws
${\{\overline\mu^{\eps,m}\}_{\eps>0}}$ satisfy the same good LDP  on $\cH^s$ as
$\{\mu^\eps\}$, with speed $\eps^{-1}$ and rate $\cE$. However, the canonical solutions $\overline U^{\eps,m}$ satisfy a good LDP on $\cX_T^s$ with rate function given by \begin{equation}\label{eq:bad-dynamic-rate}
	 J_m( u)=
 \begin{cases}
  \cE( u(0)),&u(0)\in H^1\times L^2, \text{ and } u=\Phi_T^m( u(0));\\
  +\infty,&\text{otherwise.}
 \end{cases}
\end{equation}
\end{theorem}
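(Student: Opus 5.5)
The plan is to reduce everything to an LDP for the enhanced data of the perturbed initial datum, and then contract through $\widehat\Gamma_T$ exactly as in the proof of Theorem~\ref{thm:main}. Let $\overline z^{\eps,m}=S\overline X^{\eps,m}=z^\eps+q^{\eps,m}$, and let $\overline{\mathfrak Z}^{\eps,m}$ be its canonical enhancement. The Wick ordering is taken with respect to the covariance of $\gamma^\eps$; this is what the canonical solution uses, because $\overline\mu^{\eps,m}\ll\gamma^\eps$. Expanding binomially gives
\[
 :(\overline z)^2: = :z^2: + 2zq + q^2,\qquad :(\overline z)^3: = :z^3: + 3:z^2:q + 3zq^2 + q^3 .
\]
Comparing with $\mathcal T_m(\mathfrak Z^\eps)$, the claim to prove is that
\[
 \overline{\mathfrak Z}^{\eps,m}-\mathcal T_m(\mathfrak Z^\eps)=\Big(Sq^{\eps,m}\text{-terms},\,q,\;2zq+(q^2-\tfrac m3),\;3:z^2:q+3z(q^2-\tfrac m3)+q^3\Big)
\]
is superexponentially small in $\scrE([-T,T])$. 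Precisely, for every $\delta>0$ we need $\limsup_\eps\eps\log\mathbb P(\|\cdot\|_{\scrE}>\delta)=-\infty$. Granting this, the steps are as follows. By Proposition~\ref{prop:enhanced-gaussian}, and since $\mathcal T_m$ is continuous on $\scrE$ (it is a translation by a constant plus a bounded linear map), $\mathcal T_m(\mathfrak Z^\eps)$ satisfies an LDP with rate $\mathbf I_m$. Exponential equivalence then transfers this LDP to $\overline{\mathfrak Z}^{\eps,m}$. Finally, Lemma~\ref{lem:path-effective-contraction} together with Lemma~\ref{lem:path-global-reconstruction} (openness of $\mathscr D_T$, the inclusion \eqref{eq:path-effective-domain-in-D}, and the identity \eqref{eq:path-reconstruction-skeleton}) yields the trajectory rate $J_m$. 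Projecting with $\pi_0$ gives the static LDP with rate $\cE$, since $\pi_0\mathcal T_m=\pi_0$. We also need the fact that $\overline U^{\eps,m}=\widehat\Gamma_T(\overline{\mathfrak Z}^{\eps,m})$ almost surely. This follows by absolute continuity from the definition of the canonical solution.

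For the superexponential estimates, we condition on $X^\eps$ where convenient and use Proposition~\ref{prop:prototype_wiener_chaos_bound} on the joint Gaussian space of $(X^\eps,\xi)$. The $\eps$-dependence is handled by first working under $\gamma^\eps$ and then invoking Lemma~\ref{lem:change-measure}. All the error terms lie in finite inhomogeneous chaoses of order at most $3$, so it is enough to show two things. First, their $L^2$ norms decay like $N_\eps^{-c}$ in $\mathcal Y$ and $\cX^s_T$. Second, the factor $\eps^{\ell/2}$ does not spoil the argument: the chaos tail gives $\exp(-c(\delta N^{c})^{2/\ell})$, and since $N_\eps^{-1}=o(\eps^k)$ for every $k$ by \eqref{eq:path-superfast-N}, this is $\exp(-\eps^{-K})$ for every $K$.

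The individual error terms are estimated as follows.
\begin{itemize}
\item $q$ in $\cH^s$, $s<0$: the $\cH^0$ norm of $q$ is $O(1)$, so the $\cH^s$ norm is $O(N^{s})$.
\item $zq$ and $:z^2:q$ in $W^{-\kappa,p}$: these are products of an independent low-temperature field with a field supported at frequencies of size about $N$. Their second moments are of order $\eps$ times $N^{-2\kappa}$ times logarithmic factors, because the product has its frequencies near $|n|\sim N$ except for the contributions of $z$ at high frequency. Those high-frequency contributions are themselves small owing to the $\langle n\rangle^{-2}$ decay of the covariance of $z$.
\item $q^3$ in $W^{-\kappa,p}$: it is a third-order chaos in $\xi$ with frequencies of size about $N$, and its $L^2$ norm is $O(M^{-1/2})$.
\end{itemize}

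The main obstacle is the term $q^2-\frac m3$ and its relatives $z(q^2-\frac m3)$. Here there is genuinely no small prefactor $\eps$, so the decay must come entirely from $N$. Write $q^2-\frac m3=\,:q^2:+(\E q^2-\frac m3)$. The deterministic part vanishes by \eqref{eq:path-q-variance}, but this must be checked carefully for the time-dependent propagator: the cross terms $\cos^2+\sin^2$ give exactly $\frac m3$ only because the $\langle n\rangle$ weight in $Q_1^{\eps,m}$ matches the propagator. The part $:q^2:$ is a second-order chaos in $\xi$, and its nonzero Fourier modes $n_1+n_2$ carry weights $m/(3M)$. Its $W^{-\kappa,p}$ norm has second moment of order $M^{-2}\sum_{n_1,n_2}\langle n_1+n_2\rangle^{-2\kappa}$. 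When $\Lambda_N$ is the annulus this is $O(N^{-2\kappa}\log N)$. For a general $\Lambda_N$, the zero-frequency diagonal $n_2=-n_1$ contributes the $:|\xi_n|^2:$ average, which has variance $O(1/M)\lesssim N^{-\theta}$. The off-diagonal terms are bounded by $O(1/M)$ using only $\langle\cdot\rangle^{-2\kappa}\le1$ and the fact that each $n_1+n_2$ value is attained by at most $M$ pairs. This is where the hypothesis $|\Lambda_N|\gtrsim N^\theta$ is used. Passing from pointwise-in-time moments to the $L^p_tW^{-\kappa,p}_x$ norm via Minkowski's inequality and Gaussian hypercontractivity is routine, which closes the argument.
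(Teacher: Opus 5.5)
Your architecture is the paper's: binomially expand the Wick powers of $z^\eps+q^{\eps,m}$, prove exponential equivalence of $\overline{\mathfrak Z}^{\eps,m}$ with $\mathcal T_m(\mathfrak Z^\eps)$, then apply $\pi_0$ for the static LDP and Lemma~\ref{lem:path-effective-contraction} with $\widehat\Gamma_T$ for the trajectory LDP. The gap is in the step you flag as the main obstacle, for a general family $\Lambda_N$. You bound the off-diagonal part of $M^{-2}\sum_{n_1,n_2}\langle n_1+n_2\rangle^{-2\kappa}$ by $O(1/M)$ ``using only $\langle\cdot\rangle^{-2\kappa}\le1$ and $r_2(k)\le M$''. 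This is false. With $\langle\cdot\rangle^{-2\kappa}\le 1$ the off-diagonal sum is $M^{-2}\sum_{k\neq0}r_2(k)=1-M^{-1}$, which is of order one. This reflects the fact that $\E[(q^2-m/3)(t,x)^2]=2(m/3)^2$, so $q^2-m/3$ does not go to zero in $L^2$. The per-mode bound $M^{-2}r_2(k)\le M^{-1}$ does not help by itself, because up to order $M^2$ distinct frequencies $k$ occur. Even for the annulus the true size is $\asymp N^{-2\kappa}\asymp M^{-\kappa}\gg M^{-1}$, since $\kappa<1$. All the decay must come from the negative Sobolev weight, combined with the fact that $\Lambda_N$ cannot concentrate; your argument does not supply this.

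The missing ingredient is Lemma~\ref{lem:convolution}(ii). For any $M$-point set $A\subset\Z^2$ one has $\sup_\ell\frac1M\sum_{n\in A}\langle \ell+n\rangle^{-2\kappa}\lesssim M^{-\kappa}$, because a translate of $A$ has at most $O(R^2)$ points in a ball of radius $R$. Equivalently, your own bound $r_2(k)\le M$ works once you split at $|k|=M^{1/2}$. The low frequencies contribute at most $M^{-1}\sum_{|k|\le M^{1/2}}\langle k\rangle^{-2\kappa}\lesssim M^{-\kappa}$. The high frequencies contribute at most $M^{-\kappa}\cdot M^{-2}\sum_k r_2(k)=M^{-\kappa}$. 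The hypothesis $|\Lambda_N|\gtrsim N^\theta$ then converts $M^{-\kappa}$ into $N^{-\theta\kappa}$. The same correction is needed for $q^3$. Its frequencies $n_1+n_2+n_3$ are not of size $\sim N$ in general. It is an inhomogeneous chaos, $q^3=H_3(q;m/3)+mq$, and $\E[H_3(q;m/3)^2]=6(m/3)^3$ pointwise, so its $L^2(\Omega;\mathcal Y)$ norm is not $O(M^{-1/2})$. The correct bound, $M^{-\kappa}$ on the weighted second moment of the third-chaos part, again comes from applying the lemma in the last summation variable. Likewise, $z(q^2-m/3)$ needs Lemma~\ref{lem:convolution}(i) followed by (ii); this term does in fact carry a factor $\eps$. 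Since only polynomial decay in $N$ is required, these corrected rates suffice, and the rest of your argument goes through.
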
 Here, we recall that $\Phi_T^m$ is the solution map to the equation \eqref{eq:classical-nlw-shifted-mass} with shifted mass \begin{equation}\label{eq:classical-nlw-shifted-mass-recall}
 \partial_t^2u+(1+m-\Delta)u+u^3=0
\end{equation} for initial data $x=(f,g)\in H^1\times L^2$. In particular, since no $u\neq 0$ satisfies both \eqref{eq:classical-nlw} and \eqref{eq:classical-nlw-shifted-mass-recall}, the rate function $J_m$ given by \eqref{eq:bad-dynamic-rate} differs from \eqref{eq:dynamic-rate}, and the above implies Theorem \ref{thm: bad LDP}.
 \\{The proof follows the same strategy as Theorem \ref{thm:main}, arguing at the level of the enhanced data.  A LDP for the enhanced data $\overline{\mathfrak{Z}}^{\eps, m}$ with rate $\mathbf I_m$ is found in Proposition~\ref{prop:path-anomalous-enhanced}, with technical support given by Proposition \ref{lem:path-UV}. The static LDP then follows by projection to the initial data, while the dynamic LDP follows directly from the contraction principle Lemma \ref{lem:path-effective-contraction} applied to the reconstruction map $\widehat\Gamma_T$.}

\subsection{Main Estimates} Throughout this subsection, we repeatedly write $N=N_\eps$ and $M=M_N$ in order to simplify notation.
\begin{lemma}\label{lem:expequiv} For every $a>0$,
\begin{equation}\label{eq:path-Q-se}
 \limsup_{\eps\downarrow0}\eps\log
 \PP\big(\|Q^{\eps,m}\|_{\cH^s}>a\big)=-\infty.
\end{equation} \end{lemma}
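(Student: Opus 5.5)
The bound will follow from Gaussian concentration for a first-chaos random variable, once the $L^2(\PP;\cH^s)$ norm of $Q^{\eps,m}$ is shown to decay polynomially in $N_\eps$. By \eqref{eq:path-superfast-N}, such decay beats every power of $\eps$.

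First, compute the second moment. Each component is a finite Gaussian Fourier series supported on $\Lambda_N$, so by orthogonality
\[
 \E\|Q_0^{\eps,m}\|_{H^s}^2=\frac{m}{3M}\sum_{n\in\Lambda_N}\langle n\rangle^{2s}\lesssim m\,N^{2s},
 \qquad
 \E\|Q_1^{\eps,m}\|_{H^{s-1}}^2=\frac{m}{3M}\sum_{n\in\Lambda_N}\langle n\rangle^{2s-2}\langle n\rangle^{2}\lesssim m\,N^{2s}.
\]
Here we used $|n|\ge N$ on $\Lambda_N$ and $s<0$. Hence $\sigma_\eps:=\|Q^{\eps,m}\|_{L^2(\PP;\cH^s)}\lesssim m^{1/2}N_\eps^{s}$, which tends to zero polynomially in $N_\eps$. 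Notably, only the lower bound $\inf_{\Lambda_N}|n|\ge N$ is needed at this stage, not the cardinality assumption on $M_N$.

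Second, $Q^{\eps,m}$ is a centred Gaussian in the Hilbert space $\cH^s$, i.e. an element of the first Wiener chaos of the auxiliary variables $\xi$. Proposition~\ref{prop:prototype_wiener_chaos_bound} with $\ell=1$ therefore gives
\[
 \PP\big(\|Q^{\eps,m}\|_{\cH^s}>a\big)\le C_1\exp\big(-c_1 a^2\sigma_\eps^{-2}\big)\le C_1\exp\big(-c\,a^2 m^{-1}N_\eps^{2|s|}\big).
\]
Fernique's theorem or Borell--TIS would serve equally well here.

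Third, multiply by $\eps$ and take logarithms:
\[
 \eps\log\PP\big(\|Q^{\eps,m}\|_{\cH^s}>a\big)\le \eps\log C_1-c\,a^2m^{-1}\,\eps N_\eps^{2|s|}.
\]
By \eqref{eq:path-superfast-N} with $k=1/(2|s|)$ (any $k$ works), we have $\eps N_\eps^{2|s|}\to\infty$, which gives \eqref{eq:path-Q-se}.

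There is no genuine obstacle. The only points to check are that $s<0$ strictly, so the decay exponent $2|s|$ is positive, and that the $\langle n\rangle$ weight in $Q_1$ exactly cancels the $H^{s-1}$ weight, so that both components carry the same $N^{2s}$ decay.
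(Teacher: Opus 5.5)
Your proof is correct, but it reaches the tail bound with a different tool from the paper's.

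The paper bounds $\|Q^{\eps,m}\|_{\cH^s}^2\lesssim mN^{2s}M^{-1}\sum_n(|\xi_{0,n}|^2+|\xi_{1,n}|^2)$, summing over one representative from each pair $\{n,-n\}$. It then applies an explicit Chernoff bound to this sum of independent $\chi^2$ variables and obtains $\PP(\|Q^{\eps,m}\|_{\cH^s}>a)\le\exp(-c_aMN^{-2s})$. You instead compute only the second moment $\E\|Q^{\eps,m}\|_{\cH^s}^2\lesssim mN^{2s}$ and invoke the $\ell=1$ case of Proposition~\ref{prop:prototype_wiener_chaos_bound}. This gives $\exp(-ca^2m^{-1}N^{-2s})$.

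Your route is shorter and reuses a tool the paper already relies on, in Lemma~\ref{lemma:repeatable_chaos} and Step~1 of Proposition~\ref{lem:path-UV}. As you note, it also does not need the cardinality assumption $M_N\gtrsim N^\theta$. The paper's route exploits independence across the $M$ modes and so gains an extra factor $M$ in the exponent. Equivalently, the weak variance $\sup_{\|\phi\|\le1}\E\,\phi(Q^{\eps,m})^2\lesssim mN^{2s}/M$ is a factor $M$ smaller than the strong variance you use, and Borell--TIS with the weak variance would recover that factor. The sharpening is irrelevant for this lemma, since $N_\eps^{-2s}$ alone already beats $\eps^{-1}$.

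One small imprecision: in your last step, not every $k$ works directly. You need $k\ge 1/(2|s|)$ to turn $N_\eps^{-1}=o(\eps^k)$ into $\eps N_\eps^{2|s|}\to\infty$. This is harmless, since \eqref{eq:path-superfast-N} holds for all $k>0$.
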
 \begin{proof}
	We write$$ 
 \|Q^{\eps,m}\|_{\cH^s}^2
 \le C m N^{2s}M^{-1}
 \sum_{n\in\Lambda_N/\{\pm 1\}}(|\xi_{0,n}|^2+|\xi_{1,n}|^2).
$$
where we have enforced independence of the summands by choosing one representative from each pair $\{n,-n\}$. Since each summand has the $\chi^2$-distribution, the cumulant generating function $\psi$ of $|\xi_{0,n}|^2+|\xi_{1,n}|^2$ is finite in a neighbourhood of the origin, independently of $N,m, \eps$. A Chernoff bound yields, for every fixed
$a>0$ and all sufficiently large $N$, and any $\lambda\ge 0$,
\begin{equation}\label{eq:path-Q-tail}\begin{split}
 \PP\big(\|Q^{\eps,m}\|_{\cH^s}>a\big)
& \le \exp\left(-CmM\lambda N^{-2s}{a^2}\right)\mathbb{E}\left[\exp\left(\lambda Cm \sum_{n\in \Lambda_N} (|\xi_{0,n}|^2+|\xi_{1,n}|^2)\right)\right] \\& \leq \exp\left(M\psi(\lambda Cm)-CmM{a^2}\lambda N^{-2s}\right).\end{split}
\end{equation}  Choosing $\lambda>0$ small enough that $\psi(\lambda Cm)<\infty$, we recall
that $s<0$, so, for all sufficiently large $N$, the second term in
\eqref{eq:path-Q-tail} dominates and
\[
 \PP\big(\|Q^{\eps,m}\|_{\cH^s}>a\big)
 \le \exp\big(-c_aMN^{-2s}\big)
 \le \exp\big(-c_aN^{\theta-2s}\big)
\]
which is superexponential in $\eps$ thanks to \eqref{eq:path-superfast-N}.

\end{proof} \begin{proposition}
[Wick Powers of the High-Frequency {Perturbation}]\label{lem:path-UV}

Let $z^\eps=SX^\eps$, where $X^\eps\sim\mu^\eps$ is independent of
$Q^{\eps,m}$.  As $\eps\to 0$, we have the superexponential convergences in $L^p([-T,T], W^{-\kappa,p})$:
\begin{equation} \label{eq:path-q-se}
 q^{\eps,m}\rightarrow 0 \end{equation}
\begin{equation}
 (q^{\eps,m})^2-\frac m 3\rightarrow 0
 \label{eq:path-q2-se}\end{equation} \begin{equation}
 (q^{\eps,m})^3\rightarrow 0
\label{eq:path-q3-se}\end{equation}
\begin{equation} z^\eps q^{\eps,m}\rightarrow 0,
 \qquad
 :(z^\eps)^2: q^{\eps,m}
 \rightarrow 0
\label{eq:path-mixed1-se}\end{equation}
\begin{equation} z^\eps\big((q^{\eps,m})^2-\frac m 3\big)
 \rightarrow 0\label{eq:path-mixed2-se}
\end{equation}
in the sense that, for each of the listed stochastic objects $Y^\eps$, it holds that \begin{equation}\label{eq: claimed supex convergence} \limsup_{\eps\downarrow0}\eps\log\PP(\|Y^\eps\|_{L^p([-T,T],W^{-\kappa,p})}>a)=-\infty. \end{equation}
\end{proposition}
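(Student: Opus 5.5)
Each listed object lives in a finite (inhomogeneous) Wiener chaos of the joint Gaussian family $(\bar X^\eps,\xi)$, with one caveat: $z^\eps$ is driven by the Gibbs data rather than Gaussian data. So the plan is to first prove everything with $X^\eps$ replaced by $\bar X^\eps\sim\gamma^\eps$ (independent of $\xi$), and at the end transfer to $\mu^\eps$. The transfer uses Lemma~\ref{lem:change-measure} conditionally on $\xi$: by Hölder, $\PP(A)\le C_q\PP_\gamma(A)^{1/q'}$ with $C_q$ uniform in $\eps$ on the exponential scale, and the product structure makes this legitimate. Under the Gaussian product measure, each object $Y^\eps$ is a polynomial of degree $\ell\le 3$ in the Gaussians. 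Proposition~\ref{prop:prototype_wiener_chaos_bound} therefore gives
\[
\PP(\|Y^\eps\|>a)\le C\exp\big(-c(a/\|Y^\eps\|_{L^2(\Omega;L^p W^{-\kappa,p})})^{2/\ell}\big).
\]
It then suffices to show that the $L^2(\Omega)$ norm of each object decays like a negative power of $N_\eps$, say $\lesssim \eps^{1/2}N^{-\alpha}$ or merely $N^{-\alpha}$. By \eqref{eq:path-superfast-N}, $(N^{\alpha})^{2/\ell}$ beats $\eps^{-1}$, so the tail is superexponential. The norm $L^p([-T,T];W^{-\kappa,p})$ sits inside $L^2(\Omega)$; by Gaussian hypercontractivity on the fixed chaos we may estimate $\E\|Y\|^p$ instead. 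By Fubini this reduces to the pointwise bound $\E|\langle\nabla\rangle^{-\kappa}Y(t,x)|^2$, uniformly in $(t,x)$, exactly as in \cite[Proposition 4.2]{OhThomann}.

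The second-moment computations are done in Fourier space. For \eqref{eq:path-q-se}, the Fourier coefficients of $q^{\eps,m}(t)$ have variance $\lesssim m/M$, supported on $|n|\ge N$. Hence
\[
\E|\langle\nabla\rangle^{-\kappa}q|^2\lesssim mN^{-2\kappa},
\]
and the same bound holds for $\partial_t q$, which is harmless because only $q$ enters the listed objects.

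For \eqref{eq:path-q2-se}, write $q^2-\frac m3=:q^2:+\big(\E q^2-\frac m3\big)$. The mean term vanishes identically by \eqref{eq:path-q-variance}. For this I check that the time phases cancel: $\E q(t,x)^2=\frac m{3M}\sum_n(\cos^2+\sin^2)=\frac m3$, since $\xi_0,\xi_1$ are independent and $\langle n\rangle$ normalises $Q_1$. The Wick square $:q^2:$ has modes $n_1+n_2$ with covariance
\[
\sum_{n}\langle n\rangle^{-2\kappa}\sum_{n_1+n_2=n}M^{-2}.
\]
Estimating the inner sum by $M^{-2}\cdot M$ per $n_1$, and splitting into $|n|\gtrsim N$ versus small $n$, gives a bound $\lesssim M^{-1}\sum_{|n|\le 4N}\langle n\rangle^{-2\kappa}\cdot(\#\text{reps})/M$.

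\textbf{This is the main obstacle.} Resonant low output frequencies $n_1+n_2\approx 0$ do not gain from $\langle n\rangle^{-\kappa}$, so the decay must come from the counting $|\{(n_1,n_2)\in\Lambda^2:n_1+n_2=n\}|\le M$. This gives a total of $\lesssim M^{-1}\sum_{|n|\le 4N}\langle n\rangle^{-2\kappa}$, which is only bounded by $N^{2-2\kappa}/M$. For the annulus ($M\sim N^2$) this is $N^{-2\kappa}$, which is fine. For general $\theta$ I would instead use $\|\cdot\|_{W^{-\kappa,p}}$ with the $L^2_x$-type computation $\sum_n\langle n\rangle^{-2\kappa}\E|\widehat{:q^2:}(n)|^2\le\sum_n \E|\widehat{:q^2:}(n)|^2=2\|\rho\|_{\ell^2}^2$. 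Here $\rho$ is the spectral measure with weights $m/(3M)$ on $M$ points, so the bound is $\lesssim m^2/M\lesssim N^{-\theta}$. This $\ell^2$ argument is the one to use uniformly.

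The remaining objects follow the same template:

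\begin{itemize}
\item $q^3=:q^3:+3\E[q^2]q$ gives $:q^3:$ (variance $\lesssim\|\rho^{*3}\|_{\ell^1}$ weighted, controlled similarly by $M^{-1}$) plus $m q$, which is already handled by \eqref{eq:path-q-se}.
\item The mixed terms $z q$, $:z^2:q$, and $z(q^2-\frac m3)=z\,{:}q^2{:}$ are products of independent chaoses. Their second moments factor as convolutions of the spectral measure of $z$ (weights $\eps\langle n\rangle^{-2}$), or of $:z^2:$, with that of $q$ or $:q^2:$. For example,
\[
\E|\langle\nabla\rangle^{-\kappa}(zq)|^2\lesssim\eps\sum_{n_1,n_2}\langle n_1+n_2\rangle^{-2\kappa}\langle n_1\rangle^{-2}\tfrac mM\mathbf 1_{n_2\in\Lambda}.
\]
Here $\sum_{n_1}\langle n_1\rangle^{-2}\langle n_1+n_2\rangle^{-2\kappa}\lesssim \langle n_2\rangle^{-2\kappa}\log\langle n_2\rangle$, so the total is $\lesssim\eps N^{-2\kappa}\log N$.
\item The terms involving $:z^2:$ are analogous, using the Oh--Thomann bound on the covariance of $:z^2:$ (namely $\lesssim\eps^2\langle n\rangle^{-2}\log\langle n\rangle$).
\item For $z{:}q^2{:}$, combine the previous bound for $z$ with the $\ell^2$ bound $\lesssim M^{-1}$ for $:q^2:$.
\end{itemize}

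All of these are polynomial decays in $N$, which closes the argument via the chaos tail bound and the measure transfer.
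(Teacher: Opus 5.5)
Your overall architecture matches the paper: reduction to Gaussian data by H\"older and Lemma~\ref{lem:change-measure}, the chaos tail bound in terms of the $L^2(\Omega;L^pW^{-\kappa,p})$ norm, reduction to $\sup_t\sum_k\langle k\rangle^{-2\kappa}\E|\widehat Y(k,t)|^2$, and the computations for $q$, $zq$ and $:z^2:q$. However, the step you correctly single out as the main obstacle is then resolved with a false estimate. The claim $\sum_k\E|\widehat{:q^2:}(k,t)|^2=2\|\rho\|_{\ell^2}^2\lesssim M^{-1}$ is wrong. Set $c^2=m/(3M)$ and $r_2(k)=\#\{(n_1,n_2)\in\Lambda_N^2:n_1+n_2=k\}$. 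Then $\E|\widehat{:q^2:}(k,t)|^2=2c^4r_2(k)$, so the sum over $k$ equals $2c^4M^2=2(m/3)^2$, i.e.\ $2\|\rho\|_{\ell^1}^2$; the quantity $2\|\rho\|_{\ell^2}^2$ is only the zero-mode contribution. Equivalently, $q(t,x)$ is a centred Gaussian of variance $m/3$ at every point, so $\E|q^2-\frac m3|^2=2(m/3)^2$ pointwise. Thus $q^2-\frac m3$ is of order one in $L^2$, and its smallness is purely a negative-regularity effect: discarding the weight $\langle k\rangle^{-2\kappa}$ loses all decay in $N$. The same error invalidates your $M^{-1}$ bound for $:q^3:$ and your treatment of $z(q^2-\frac m3)$. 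There, a bound $A\lesssim\eps$ without decay in $N$ only gives tails $\exp(-ca^{2/3}\eps^{-1/3})$, which are not superexponential. Your first counting argument does work for the annulus, but it uses $\Lambda_N\subset\{|n|<2N\}$ and $M\sim N^2$, neither of which is part of \eqref{eq:path-Lambda-assumptions}.

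The missing ingredient is a counting bound that keeps the weight and fixes all but one frequency. For any finite $A\subset\Z^2$ with $|A|=M$ and any $\ell\in\Z^2$, at most $\min\{M,C4^j\}$ points $n\in A$ satisfy $\langle\ell+n\rangle<2^{j+1}$. Summing over dyadic shells gives $\frac1M\sum_{n\in A}\langle\ell+n\rangle^{-2\kappa}\lesssim M^{-\kappa}$, and the same up to $\log^b(2+M)$ with logarithmic weights. Applied to the innermost sum, this yields
\[
\sum_k\langle k\rangle^{-2\kappa}\E\big|\widehat{(q^2-m/3)}(k,t)\big|^2\lesssim_m M^{-2}\sum_{n_1,n_2\in\Lambda_N}\langle n_1+n_2\rangle^{-2\kappa}\lesssim_m M^{-\kappa}\lesssim_m N^{-\theta\kappa},
\]
and likewise $M^{-3}\sum_{n_1,n_2,n_3\in\Lambda_N}\langle n_1+n_2+n_3\rangle^{-2\kappa}\lesssim M^{-\kappa}$ for $:q^3:$ (the true rate is $M^{-\kappa}$, not $M^{-1}$). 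For $z(q^2-\frac m3)$, you first do the $z$-convolution, $\sum_j\langle j+l\rangle^{-2\kappa}\langle j\rangle^{-2}\lesssim\langle l\rangle^{-2\kappa}\log(2+\langle l\rangle)$. You then apply the logarithmic version of the counting bound to $M^{-2}\sum_{n_1,n_2}\langle n_1+n_2\rangle^{-2\kappa}\log(2+\langle n_1+n_2\rangle)$. This is precisely the paper's Lemma~\ref{lem:convolution}(ii); with it in place, the rest of your template goes through.
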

Before giving the proof, we record some tools which will be used in the proof. We begin with the following family of convolution estimates. \begin{lemma}
	\label{lem:convolution} For any $0<{\alpha}<1, b\ge 0$, there exists a constant $C_{\alpha,b}$ such that the following estimates hold. \begin{enumerate}[label=\roman*).] \item {For} all $k\in \Z^2$,  \begin{equation}\label{eq:path-conv-all}
		\sum_{n\in \Z^2} \frac{\log^b(2+\langle n\rangle)}{\langle k+n\rangle^{2\alpha}\langle n\rangle^2} \le C_{b,\alpha}\langle k\rangle^{-2\alpha}\log^{1+b}(2+\langle k\rangle).
	\end{equation} \item For every finite subset $A\subset\Z^2$
of size $|A|=M$, it holds that
\begin{equation}\label{eq:path-finite-set-sum}
 \sup_{\ell\in\Z^2}
\frac1M \sum_{n\in A}
 \frac{\log^b(2+\langle \ell+n\rangle)}
      {\langle \ell+n\rangle^{2\alpha}}
 \le C_{\alpha,b}
 M^{-\alpha}\log^b(2+M).
\end{equation}
\end{enumerate}
\end{lemma}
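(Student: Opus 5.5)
Both estimates are elementary lattice-sum bounds; I would handle them by splitting $\Z^2$ into regions according to the relative sizes of $|n|$, $|k|$ (resp.\ $|\ell+n|$ and $M$), and bounding the sum on each region by a dyadic or volume count.

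\emph{Part i).} Split the sum over $n\in\Z^2$ into three regions.
\begin{itemize}
\item[(A)] $|n|\le \tfrac12\langle k\rangle$. Here $\langle k+n\rangle\gtrsim\langle k\rangle$, so the contribution is at most
\[
\langle k\rangle^{-2\alpha}\log^b(2+\langle k\rangle)\sum_{|n|\lesssim\langle k\rangle}\langle n\rangle^{-2}\lesssim\langle k\rangle^{-2\alpha}\log^{1+b}(2+\langle k\rangle),
\]
since the two-dimensional sum of $\langle n\rangle^{-2}$ over a ball of radius $R$ is $O(\log(2+R))$. This is the region producing the extra logarithm.
\item[(B)] $|n|\ge 2\langle k\rangle$. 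Here $\langle k+n\rangle\sim\langle n\rangle$, so the contribution is bounded by
\[
\sum_{|n|\gtrsim\langle k\rangle}\log^b(2+\langle n\rangle)\langle n\rangle^{-2-2\alpha}\lesssim\langle k\rangle^{-2\alpha}\log^b(2+\langle k\rangle),
\]
which follows by comparison with the radial integral $\int_R^\infty r^{-1-2\alpha}\log^b r\,dr$.
\item[(C)] $\tfrac12\langle k\rangle<|n|<2\langle k\rangle$. Here $\langle n\rangle^{-2}\log^b(2+\langle n\rangle)\sim\langle k\rangle^{-2}\log^b(2+\langle k\rangle)$. Substituting $m=k+n$, which ranges over $|m|\lesssim\langle k\rangle$, we get $\sum_{|m|\lesssim\langle k\rangle}\langle m\rangle^{-2\alpha}\lesssim\langle k\rangle^{2-2\alpha}$, using $\alpha<1$. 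The product is $\langle k\rangle^{-2\alpha}\log^b$.
\end{itemize}
Summing the three regions gives \eqref{eq:path-conv-all}.

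\emph{Part ii).} This is a rearrangement argument. Fix $\ell$ and set $m=\ell+n$; the points $m$ range over the $M$ distinct points of $\ell+A$. The function $x\mapsto\log^b(2+x)\,x^{-2\alpha}$ is decreasing for $x$ beyond a constant depending on $\alpha,b$, and is bounded below that threshold. Hence the sum over any $M$ distinct lattice points is, up to a constant, maximised by the $M$ points closest to the origin, i.e.\ those in a ball of radius $R\sim M^{1/2}$. Using at most $O(r)$ lattice points at each radius $r$,
\[
\sum_{|m|\le R}\frac{\log^b(2+\langle m\rangle)}{\langle m\rangle^{2\alpha}}\lesssim\int_0^R r^{1-2\alpha}\log^b(2+r)\,dr\lesssim R^{2-2\alpha}\log^b(2+R)\sim M^{1-\alpha}\log^b(2+M).
\]
Dividing by $M$ gives \eqref{eq:path-finite-set-sum}, uniformly in $\ell$.

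\emph{Expected obstacle.} The only delicate point is the rearrangement step in ii). The weight is not globally monotone when $b>0$, so I would first replace it by its decreasing majorant $\sup_{y\ge x}\log^b(2+y)y^{-2\alpha}$, which is comparable to the original weight. The rearrangement claim is then clean for this majorant.
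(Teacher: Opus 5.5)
Your proof is correct.

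\textbf{Part i).} This follows the paper's argument. Your regions (A) and (B) are the paper's $A_1$ and $A_4$. Your region (C) merges the paper's $A_2$ (where $|k+n|\le |k|/2$) and $A_3$ into the single annulus $|n|\sim\langle k\rangle$. You treat it by summing $\langle k+n\rangle^{-2\alpha}$ over a ball of radius $\sim\langle k\rangle$, which is exactly how the paper handles $A_2$. The paper's separate crude cardinality bound on $A_3$ is simply absorbed.

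\textbf{Part ii).} Here your route differs in packaging. The paper never rearranges. It decomposes $\ell+A$ into dyadic shells $A_{\ell,j}=\{n\in A+\ell: 2^j\le\langle n\rangle<2^{j+1}\}$ and uses only the cardinality bound $|A_{\ell,j}|\lesssim\min\{M,2^{2j}\}$, splitting the shell sum at $2^{2J}\asymp M$. On each shell the weight is comparable to $2^{-2\alpha j}(1+j)^b$, so the argument needs no monotonicity at all. The obstacle you flag, that the weight is non-monotone when $b>0$ and needs a decreasing majorant, therefore never arises.

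Your rearrangement argument is equally valid once the majorant is inserted. It has the merit of naming the extremal configuration explicitly: $\ell+A$ filling a ball of radius $\sim M^{1/2}$ about the origin. The paper's cap $\min\{M,2^{2j}\}$ encodes the same extremal case without stating it, at the cost of nothing beyond a dyadic count.
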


\begin{proof}
For \eqref{eq:path-conv-all}, we split the sum into the regions
\[
 A_1=\{|n|\le |k|/2\},\qquad A_2=\{|k+n|\le |k|/2\},
\]
and divide the complement by 
\[
A_3=\{|n|\le 2|k|\}\setminus (A_1\cup A_2);\qquad A_4=\{|n|>2|k|\}\setminus (A_1\cup A_2).
\] On $A_1$, $\langle k+n\rangle\asymp\langle k\rangle$, and hence
\[
 \sum_{A_1}\frac{\log^b(2+\langle n\rangle)}{\langle k+n\rangle^{2\alpha}\langle n\rangle^2}
 \lesssim \langle k\rangle^{-2\alpha}\log^{1+b}(2+\langle k\rangle).
\]
On $A_2$, $\langle n\rangle\asymp\langle k\rangle$. Writing $j=k+n$ and using $0<\alpha<1$,
\[
 \sum_{A_2}\frac{\log^b(2+\langle n\rangle)}{\langle k+n\rangle^{2\alpha}\langle n\rangle^2}
 \lesssim \frac{\log^b(2+\langle k\rangle)}{\langle k\rangle^2}
 \sum_{|j|\le |k|/2}\langle j\rangle^{-2\alpha}
 \lesssim \langle k\rangle^{-2\alpha}\log^b(2+\langle k\rangle).
\]
For $A_3$, both $|n|, |n+k|>|k|/2$, so the summand is at most $\langle k\rangle^{-2-2\alpha}\log^b(1+\langle k\rangle)$. Since the set is of size $|A_3|\lesssim |k|^2$, the total contribution at most of the order $\langle k\rangle^{-2\alpha}\log^b(2+\langle k\rangle)$. Finally, on $A_4$, $\langle k+n\rangle\asymp\langle n\rangle$, so a dyadic decomposition gives
\[
 \sum_{A_4}\frac{\log^b(2+\langle n\rangle)}{\langle k+n\rangle^{2\alpha}\langle n\rangle^2}
 \lesssim\sum_{|n|>2|k|}\frac{\log^b(2+\langle n\rangle)}{\langle n\rangle^{2+2\alpha}}
 \lesssim\langle k\rangle^{-2\alpha}\log^b(2+\langle k\rangle).
\]
Combining the four regions proves \eqref{eq:path-conv-all}. \\ \\ For the second claim \eqref{eq:path-finite-set-sum}, let $A,M$ be as given, and fix $\ell\in \Z^2$. We decompose the translate $A+\ell$ into its intersections with dyadic annuli by $$A_{\ell,j}:=\{n\in A+\ell:2^j\le\langle n\rangle<2^{j+1}\}, \qquad 0\le j<\infty.$$  We observe that
$|A_{\ell,j}|\lesssim \min\{M,2^{2j}\}$, and choose $J$ so that
$2^{2J}\asymp M$. The contribution of every $n\in A_{\ell,j}$ is $\asymp 2^{-2j\alpha}(1+j)^b$, and so the sum is bounded, up to constants, by
\begin{equation}\begin{split}
  \sum_{j=0}^\infty |A_{\ell,j}|2^{-2\alpha j}(1+j)^b& \lesssim \sum_{j\le J}2^{2j(1-\alpha)}(1+j)^b
 +\sum_{j>J}M2^{-2\alpha j}(1+j)^b\\ &
 \lesssim M^{1-\alpha}\log^b(2+M),
\end{split}\end{equation}
which proves \eqref{eq:path-finite-set-sum}.
\end{proof}

 The other tool which will be used is a general condition for proving such convergences, which reduces each of the statements of Proposition \ref{lem:path-UV} to estimating the Fourier coefficients. \begin{lemma}\label{lemma:repeatable_chaos}
	Fix $\ell<\infty$, and let $Y=(Y_t)_{-T\le t\le T}\in C([-T,T],\mathscr S'(\T^2))$ be a distribution-valued stochastic process such that, for each ${-T\le t\le T}$, $Y_t$ is a spatially stationary distribution, and belongs to the sum of the Wiener chaoses of order up to $\ell^{\rm th}$ generated by a Gaussian random variable $\mathbb{X}$ on a probability space $(\Omega, \mathcal{F}, \PP)$. Assume that, for some $\kappa>0, A\in (0,\infty)$, \begin{equation}\label{eq:useful_concentration_assumption}
		\sup_{{-T\le t\le T}} \sum_{n\in \Z^2} \langle n\rangle^{-2\kappa} \E|\hat{Y}(t,n)|^2 \le A<\infty.
	\end{equation} Then, for any $2\le p<\infty$, $Y$ admits a version $\widetilde{Y}$ valued in $L^p([-T,T], W^{-\kappa,p})$, and further satisfies the concentration bound, for some {constant $C=C(T,p,\ell)$} and all $r>0$, \begin{equation}
		\label{eq:useful_concentration} \PP\left(\|{\widetilde{Y}}\|_{L^p([-T,T],W^{-\kappa,p})}>r\right)\le C^{-1} \exp\left[-C\left(\frac{r}{\sqrt{A}}\right)^{2/\ell}\right].
	\end{equation}
\end{lemma}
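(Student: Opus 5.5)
The plan is the standard hypercontractivity route: control every moment $\E\|Y\|^q_{L^p([-T,T],W^{-\kappa,p})}$ by the second moments of the Fourier coefficients, and then turn moment growth into a tail bound. Let $J_\kappa=\langle\nabla\rangle^{-\kappa}$, so that
\[
\|Y\|_{L^p W^{-\kappa,p}}^p=\int_{-T}^T\int_{\T^2}|J_\kappa Y_t(x)|^p\,dx\,dt .
\]
Since $Y_t$ is spatially stationary, the Fourier coefficients at distinct frequencies are uncorrelated, up to the reality pairing $n\leftrightarrow -n$. Hence, for each fixed $(t,x)$,
\[
\E|J_\kappa Y_t(x)|^2=\sum_n\langle n\rangle^{-2\kappa}\E|\hat Y(t,n)|^2\le A .
\]
Rigorously, I would first work with the truncations $P_K Y$, which are smooth. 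There the identity is immediate, and stationarity means the value does not depend on $x$.

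Next, $J_\kappa Y_t(x)$ lies in the inhomogeneous chaos of order at most $\ell$, since chaoses are closed linear subspaces of $L^2$. Nelson's hypercontractivity therefore gives, for every $q\ge 2$,
\[
\E|J_\kappa Y_t(x)|^q\le (q-1)^{q\ell/2}A^{q/2}.
\]
For $q\ge p$, I would apply Minkowski's integral inequality with $q/p\ge1$:
\[
\big(\E\|Y\|^q_{L^pW^{-\kappa,p}}\big)^{1/q}\le \Big(\int\!\!\int \big(\E|J_\kappa Y_t(x)|^q\big)^{p/q}\Big)^{1/p}\le (2T\cdot 4\pi^2)^{1/p}(q-1)^{\ell/2}\sqrt A .
\]
For $q<p$, the same bound follows from the case $q=p$ by monotonicity of $L^q$ norms.

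To construct the version $\widetilde Y$, I would apply the bound above to $P_KY-P_{K'}Y$. Its Fourier sum is the tail of a convergent series, so it tends to $0$ uniformly in $t$; uniformity holds once the sup in \eqref{eq:useful_concentration_assumption} is slightly strengthened, or else one uses dominated convergence in $t$. Thus $P_KY$ is Cauchy in $L^q(\Omega;L^pW^{-\kappa,p})$, and its limit $\widetilde Y$ agrees with $Y_t$ as a distribution for almost every $t$, almost surely. By Fatou, the moment bound passes to the limit.

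Finally, Chebyshev's inequality gives
\[
\PP(\|\widetilde Y\|>r)\le \big(C_1 q^{\ell/2}\sqrt A/r\big)^q .
\]
I would choose $q\asymp (r/\sqrt A)^{2/\ell}/(eC_1)^{2/\ell}$ whenever this is at least $2$, which yields $e^{-q}$, i.e. the stretched-exponential bound. For small $r/\sqrt A$ the bound is trivial once the prefactor $C^{-1}$ is large. This is the same mechanism as in Proposition~\ref{prop:prototype_wiener_chaos_bound}.

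The main obstacle I expect is bookkeeping rather than substance: getting the version and measurability right, and accounting for the reality pairing in the orthogonality step. Even without orthogonality, stationarity gives $\E|J_\kappa Y|^2=\sum\langle n\rangle^{-2\kappa}\E|\hat Y(n)|^2$ directly, because $\E\hat Y(n)\overline{\hat Y(n')}=0$ for $n\ne n'$. That is exactly what is needed.
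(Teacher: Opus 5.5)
Your proposal is correct. Its first half matches the paper's argument. Stationarity turns \eqref{eq:useful_concentration_assumption} into the pointwise bound $\E|\langle\nabla\rangle^{-\kappa}Y_t(x)|^2\le A$ for a regularisation of $Y$; you truncate with $P_K$, while the paper mollifies with $\rho_\delta$. Hypercontractivity is then applied pointwise, and one integrates over $[-T,T]\times\T^2$.

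The routes part at the tail bound. The paper computes only the $p$-th moment, passes $\delta\to0$ by lower semicontinuity, and reads off $\|Y\|_{L^2(\PP;E)}\lesssim\sqrt A$ with $E=L^p([-T,T],W^{-\kappa,p})$. It then applies the Banach-valued Borell inequality, Proposition~\ref{prop:prototype_wiener_chaos_bound}, directly to $Y$. That is shorter, but it tacitly uses that the limit object is an element of the $E$-valued chaos of order at most $\ell$.

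You instead prove the full moment growth $(\E\|\widetilde Y\|_E^q)^{1/q}\lesssim_{T,p}q^{\ell/2}\sqrt A$, via Minkowski's inequality with $q/p\ge1$, and then optimise Chebyshev in $q$. This needs only scalar hypercontractivity at each point $(t,x)$. Your Cauchy argument in $L^q(\Omega;E)$ also constructs $\widetilde Y$ explicitly, and using dominated convergence in $t$ means the hypothesis needs no strengthening. The cost is only bookkeeping: for $2\le q<p$ your constant picks up a factor $(p-1)^{\ell/2}$, which is harmless since $C$ may depend on $p$.
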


\begin{proof} Let $\rho \in C^\infty_c(\T^2, [0,\infty))$ be a fixed, compactly supported mollifier with unit mass, and let $\rho_\delta, \delta>0$ be the corresponding smooth approximations of the identity. For each $t\in [-T,T]$, for all $\delta>0$ and all $x\in \T^2$, let $Y_\delta(t,x):=Y(t)(\rho_\delta(\cdot-x))$ be the mollification of $Y(t)$ by $\rho_\delta$. Using {Plancherel}'s identity, we estimate \begin{equation}
	\E\left[\|\langle \nabla \rangle^{-\kappa}Y_\delta(t)\|_{L^2}^2\right] = \sum_{n\in \Z^2} \langle n\rangle^{-2\kappa} \E[{|\widehat{Y}(t,n)\widehat{\rho_\delta}(n)|^2}] \le A
\end{equation} where we used the definition of $Y_\delta$ and that $|\widehat{\rho_\delta}|\le 1$. Since the convolution commutes with the pseudodifferential operator, and the stationarity of $Y(t)$ implies the stationarity of $\langle \nabla\rangle^{-\kappa}Y(t)$, we have the bound, uniform in time and space, \begin{equation}
	\E[|(\langle \nabla \rangle^{-\kappa}Y)_\delta(t,x)|^2] \le A.
\end{equation}
	By assumption, each $(\langle \nabla \rangle^{-\kappa}Y)_\delta(t,x)$ belongs to the ${\ell}^{\rm th}$ Wiener chaos associated to $\mathbb X$, and we may apply Proposition \ref{prop:prototype_wiener_chaos_bound} to find, for any $1<p<\infty$,		$
	\E[|(\langle \nabla \rangle^{-\kappa}Y)_\delta(t,x)|^p] \le C_pA^{p/2}$, and integrating in both variables, \begin{equation}
		\E\left[\|Y_\delta\|_{L^p([-T,T],W^{-\kappa,p})}^p\right]=		\E\left[\|\langle \nabla\rangle^{-\kappa}Y_\delta\|_{L^p({[-T,T];L^p})}^p\right] \le C_pT A^{p/2}.
	\end{equation} By lower semicontinuity, we may take $\delta\to 0$ to obtain that $Y\in L^p([-T,T],W^{-\kappa,p})$ almost surely, as claimed. {Since $p\ge2$, the preceding estimate gives $$\|Y\|_{L^2(\PP;L^p([-T,T],W^{-\kappa,p}))}\lesssim_{T,p}\sqrt A.$$ The concentration bound \eqref{eq:useful_concentration} then follows from Proposition~\ref{prop:prototype_wiener_chaos_bound}.}
	
\end{proof} We are now ready to give the
\begin{proof}[Proof of Proposition \ref{lem:path-UV}]  In order to shorten notation, we write $$q:=q^{\eps,m}, \qquad z:=z^\eps, \qquad \mathcal B:=L^p([-T,T],W^{-\kappa,p}).$$ It is convenient, for \eqref{eq:path-mixed1-se}-\eqref{eq:path-mixed2-se}, to work with the case where $X^\eps\sim \gamma^\eps$, which suffices to prove the claim for $X^\eps\sim \mu^\eps$ thanks to Lemma \ref{lem:change-measure}. With this reduction, in the subsequent steps we apply Lemma \ref{lemma:repeatable_chaos} to each object in turn. {Except for $q^3$, which is treated separately in Step~3, the objects to which we apply Lemma~\ref{lemma:repeatable_chaos} below belong to homogeneous Wiener chaoses} generated by the Gaussian random variable $(X^\eps, Q^{\eps,m})\in (\cH^s)^2$. Further, each of the objects listed is spatially stationary, as a result of the spatial stationarity of $z, q$ and their independence. We further observe that it is sufficient to exhibit any $A=A_{\eps,N}$ meeting the assumption \eqref{eq:useful_concentration_assumption} and which decays polynomially in $N$, since the choice \eqref{eq:path-superfast-N} shows that any bound of the form $$ \PP(\|Y^{\eps}\|_{\mathcal{B}} >a) \le C^{-1}\exp\left(-C(a{N^{\theta}})^{2/\ell}\right)$$ with $\theta>0, \ell<\infty$, decays superexponentially fast in $\eps$.\\ \step{1. Convergence of $q^{\eps,m}$} {The initial data $Q^{\eps,m}$ is the componentwise, Gaussian randomisation of the function \[
 p^{\eps,m}:=\sqrt{\frac{m}{3M}}\sum_{n\in\Lambda_N}(1,\langle n\rangle)e^{in\cdot x}\in\mathcal H^{-\kappa}.
\]
By \eqref{eq:path-Lambda-assumptions}, the norm is at most
\[
 \|p^{\eps,m}\|_{\cH^{-\kappa}}^2
 \lesssim_m \frac1M\sum_{n\in\Lambda_N}\langle n\rangle^{-2\kappa}
 \lesssim_m N^{-2\kappa},
\]
and hence $\|p^{\eps,m}\|_{\cH^{-\kappa}}\lesssim_m N^{-\kappa}$.
} Since $\langle\nabla\rangle^{-\kappa}$ commutes with both the wave propagator and the Fourier randomisation, the probabilistic Strichartz estimate \cite[Lemma~2.4]{OOT}, see also \cite{BenyiOh,BurqTzvetkov,CollianderOh}, applied to the randomisation of ${p^{\eps,m}}$ gives, for some constants $c,C>0$ and every $a>0$,
\begin{equation}\label{eq:path-q-tail}\begin{split}
		\mathbb{P}\left(\|q\|_{\mathcal B}>a\right)& = \mathbb{P}\left({\|S(\langle \nabla \rangle^{-\kappa} Q^{\eps,m})\|_{L^p([-T,T],L^p)}}>a\right)\\[1ex] & \le C\exp\left(-c\frac{a^2}{T^{2/p} \|\langle \nabla\rangle^{-\kappa} {p^{\eps,m}}\|_{\cH^{0}}^2 }\right) \\[1ex] & \le C\exp\left(-c\frac{a^2}{T^{2/p} \|{p^{\eps,m}}\|_{\cH^{-\kappa}}^2 }\right).\end{split}\end{equation} Using the previously remarked norm of ${p^{\eps,m}}$, the exponent in the final expression is $\sim {N^{2\kappa}}$, which is $\gg \eps^{-1}$ thanks to \eqref{eq:path-superfast-N}. \step{2. Quadratic Estimate} We start by writing $\hat{\xi}_n(t)$ for the complex Gaussians $\hat{\xi}_n(t):=\xi_{n,0}\cos(\langle n\rangle t)+\xi_{n,1}\sin(\langle n\rangle t)$ for $n\in \Lambda_N$. With this notation, we have \begin{equation}
\begin{split}
 |\widehat{q^2}(k,t)|^2
 &=\left(\frac{m}{3M}\right)^2
 \sum_{n_1,n_2,n_1',n_2'\in\Lambda_N}
 \hat\xi_{n_1}(t)\hat\xi_{n_2}(t)
 \overline{\hat\xi_{n_1'}(t)}\overline{\hat\xi_{n_2'}(t)}\\
 &\hspace{3cm}\times
 \mathbf 1_{\{n_1+n_2=n_1'+n_2'=k\}}.
\end{split}
\end{equation} We estimate the expectation using Isserlis's theorem\footnote{Wick's Theorem.}, recalling that the reality condition produces \begin{equation} \E[\hat{\xi}_n(t)\hat{\xi}_{n'}(t)]=\delta_{n,-n'}; \qquad \E[\hat{\xi}_n(t)\overline{\hat{\xi}_{n'}(t)}]=\delta_{n,n'}. \label{eq:basic_isserlis}\end{equation} 
	
	For $k\in\Z^2$, put
\[
 r_2(k):=\#\big\{(n_1,n_2)\in\Lambda_N^2:n_1+n_2=k\big\}.
\]
The three different possible pairings give \begin{equation}\begin{split} \label{eq:expandquadratic}
		\E |\widehat{q^2}(k,t)|^2 & = 2\left(\frac{m}{3M}\right)^2\sum_{n_1,n_2,n'_1,n'_2}\E[\hat{\xi}_{n_1}(t)\overline{\hat{\xi}_{n'_1}(t)}]\E[{\hat{\xi}_{n_2}(t)}\overline{\hat{\xi}_{n'_2}(t)}] \\ &+ \left(\frac{m}{3M}\right)^2\sum_{n_1,n_2,n'_1,n'_2}\E[\hat{\xi}_{n_1}(t)\hat{\xi}_{n_2}(t)]\E[\overline{\hat{\xi}_{n'_1}(t)}\overline{\hat{\xi}_{n'_2}(t)}]
\end{split}	\end{equation} where the sum runs over the same constrained subset of $\Lambda_N^4$ as before. For every $k\ne0$, the second sum vanishes, while the first pairing gives, by \eqref{eq:basic_isserlis} and the definition of $r_2$,
\[
 \E|\widehat{q^2}(k,t)|^2\lesssim_m M^{-2}r_2(k).
\]
At $k=0$, using $\Lambda_N=-\Lambda_N$,
\[
 \widehat{q^2}(0,t)-\frac m3
 =\frac{m}{3M}\sum_{n\in\Lambda_N}(|\hat\xi_n(t)|^2-1),
\]
and the same bound holds for the centred zero mode.  Thus, uniformly in
$t$,
\begin{equation}\label{eq: quadratic fourier}
 \E\big|\widehat{(q^2-m/3)}(k,t)\big|^2
 \lesssim_m M^{-2}r_2(k),\qquad k\in\Z^2.
\end{equation}
Consequently, rewriting the sum and using \eqref{eq:path-finite-set-sum} twice,
\begin{align*}
 \sum_{k\in\Z^2}\langle k\rangle^{-2\kappa}
 \E\big|\widehat{(q^2-m/3)}(k,t)\big|^2
 &\lesssim_m M^{-2}
   \sum_{n_1,n_2\in\Lambda_N}
   \langle n_1+n_2\rangle^{-2\kappa}\\
 &\lesssim_m M^{-\kappa}
 \lesssim_m N^{-\theta\kappa}.
\end{align*}
Following the discussion at the start of the proof, this proves
\eqref{eq:path-q2-se}. \\  \step{3. Cubic Estimate}
For $H_3$ the third Hermite polynomial, set
\[
 p(t,x):=H_3\big(q(t,x);m/3\big)=q(t,x)^3-mq(t,x).
\]
Since $\E[q(t,x)^2]=m/3$, the field $p(t)$ belongs to the third homogeneous
Wiener chaos. Writing $\mathcal W$ for the Wick product \[ \mathcal{W}(\xi_{n_1},\xi_{n_2},\xi_{n_3}):=\xi_{n_1}\xi_{n_2}\xi_{n_3}
-\E[\xi_{n_1}\xi_{n_2}]\xi_{n_3}
-\E[\xi_{n_1}\xi_{n_3}]\xi_{n_2}
-\E[\xi_{n_2}\xi_{n_3}]\xi_{n_1},\]see, for example \cite[Section~3.1]{JansonGaussianHilbert}, it follows that ${\mathcal W}(\xi_{n_1},\xi_{n_2},\xi_{n_3})$ is the projection of the product ${\xi_{n_1}\xi_{n_2}\xi_{n_3}}$ into the third homogeneous Wiener chaos, and the Fourier transform may be written as \begin{equation} \hat{p}(t,k)=\left(\frac{m}{3M}\right)^{3/2}\sum_{n_1,n_2,n_3\in \Lambda_N}{\mathcal W}(\hat{\xi}_{n_1}{(t)},\hat{\xi}_{n_2}{(t)},\hat{\xi}_{n_3}{(t)})1(k=n_1+n_2+n_3).\end{equation}   Using Isserlis's theorem and \eqref{eq:basic_isserlis}, the Wick products have the orthogonality property \begin{equation}\label{eq:orthogonality_wick} \E[{\mathcal W}(\hat{\xi}_{n_1}{(t)},\hat{\xi}_{n_2}{(t)},\hat{\xi}_{n_3}{(t)})\overline{{\mathcal W}(\hat\xi_{n'_1}{(t)},\hat\xi_{n'_2}{(t)},\hat\xi_{n'_3}{(t)})}]=\sum_{\sigma\in \text{Sym}(3)}\prod_{i=1}^3\delta_{n_i n'_{\sigma(i)}}.\end{equation} Arguing as in \eqref{eq:expandquadratic} - \eqref{eq: quadratic fourier}, we therefore find, uniformly in $t$, the bound $ \E|\widehat p(k,t)|^2\lesssim_m M^{-3}r_3(k)$, where $r_3(k)$ is defined by
\[
 r_3(k):=\#\big\{(n_1,n_2,n_3)\in\Lambda_N^3:
                  n_1+n_2+n_3=k\big\},
\]
Using \eqref{eq:path-finite-set-sum} three times,
\begin{align*}
 \sum_{k\in\Z^2}\langle k\rangle^{-2\kappa}
 \E|\widehat p(k,t)|^2
 &\lesssim_m M^{-3}
   \sum_{n_1,n_2,n_3\in\Lambda_N}
   \langle n_1+n_2+n_3\rangle^{-2\kappa}\\
 &\lesssim_m M^{-\kappa}
 \lesssim_m N^{-\theta\kappa}.
\end{align*}
Lemma~\ref{lemma:repeatable_chaos} therefore shows that $p\to0$
superexponentially in $\mathcal B$.  Since $q\to0$ superexponentially by
Step~1 and $q^3=p+mq$, this proves \eqref{eq:path-q3-se}.\\\step{4. Mixed Terms} We next estimate the terms involving both $z, q$, recalling as at the start of the proof that the analysis has been reduced to the case where $z=SX^\eps, X^\eps\sim \gamma^\eps$ is Gaussian, and independent of $q$. For the product of linear terms, we have \begin{equation}
	\widehat{zq}(k,t)={\sqrt{\frac{m}{3M}}\sum_{n\in \Lambda_N}\widehat{z}(k-n,t)\hat{\xi}_n(t)}
\end{equation} which is formally justified as, for each fixed $\eps$, $q$ is smooth and the product is well-defined. {Using the independence of $z$ and $q$ together with orthogonality of the Fourier coefficients, the summands are orthogonal in $L^2(\mathbb P)$.} In each term, the two coefficients are independent of each other, and $\E|\widehat{z}(n,t)|^2=\eps\langle n\rangle^{-2}$ gives \begin{equation}
	 \E|\widehat{zq}(j,t)|^2=\frac{m\eps}{3M}
   \sum_{n\in\Lambda_N}\frac1{\langle j-n\rangle^2}.
 \label{eq:path-Fourier-zq}
\end{equation} It follows that \begin{equation}
\begin{split}
 \sum_{k\in \Z^2}\langle k\rangle^{-2\kappa} \E[|\widehat{zq}(k,t)|^2]
 &=\frac{m\eps}{3M}\sum_{k\in\mathbb Z^2}\sum_{n\in\Lambda_N}
   \frac1{\langle k\rangle^{2\kappa}\langle k-n\rangle^2}\\
 &=\frac{m\eps}{3M}\sum_{n\in\Lambda_N}\sum_{k\in\mathbb Z^2}
   \frac1{\langle k+n\rangle^{2\kappa}\langle {k}\rangle^2}.
\end{split}
\end{equation} The innermost sum is bounded $\lesssim_m \eps n^{-2\kappa}\log(2+n)$ by Lemma~\ref{lem:convolution}.  Since
$|n|\ge N$ on $\Lambda_N$, and
$r^{-2\kappa}\log(2+r)$ is decreasing for all sufficiently large $r$, we may
{take }$A\lesssim_m \eps N^{-2\kappa}\log(2+N)$.
 \\\\ 
For the other mixed terms, the proof is similar. For $:z^2:q$, the sum decouples using orthogonality of Fourier modes of $q$; using the bound  $${\E|\widehat{:z^2:}(t,n)|^2} \le C{\eps^2}\log(2+\langle n\rangle)\langle n\rangle^{-2}$$ and Lemma \ref{lem:convolution}, we obtain \begin{equation}\begin{split}
	\sum_{k\in \Z^2}\langle{k}\rangle^{-2\kappa}\E |\widehat{:z^2:q}(t,k)|^2 &\lesssim_m \frac{\eps^2}{M}\sum_{n\in \Lambda_N}\sum_{j\in \Z^2}\frac{\log (2+\langle{j}\rangle)}{\langle{j}+n\rangle^{2\kappa}\langle{j}\rangle^2} \\& \lesssim_m \frac{\eps^2}{M}\sum_{n\in \Lambda_N}\langle n\rangle^{-2\kappa}\log^2(2+\langle n\rangle)\\& \lesssim_m \eps^2 N^{-2\kappa}\log^2(N). \end{split}\end{equation} 
For the term $z(q^2-m/3)$ in \eqref{eq:path-mixed2-se}, we use the independence of $z$ and $q$, and orthogonality of the Fourier modes of $z$ to decouple the sum. Using the bound \eqref{eq: quadratic fourier} obtained in Step 2, we find \begin{equation}\begin{split}
	\sum_{k\in \Z^2}\langle{k}\rangle^{-2\kappa}\E[|\widehat{z(q^2-m/3)}(t,k)|^2] &\lesssim_m
 \frac{\eps}{M^2}
 \sum_{n_1,n_2\in\Lambda_N}
 \sum_{{j}\in\Z^2}
 \frac{1}{\langle{j}+n_1+n_2\rangle^{2\kappa}\langle{j}\rangle^2} \\&\qquad\lesssim_m
 \frac{\eps}{M^2}
 \sum_{n_1,n_2\in\Lambda_N}
 \langle n_1+n_2\rangle^{-2\kappa}
 \log(2+\langle n_1+n_2\rangle)\\
 &\qquad\lesssim_m
 \eps N^{-\theta \kappa}\log(2+N).\end{split}\end{equation}  By \eqref{eq:path-Lambda-assumptions}, the last quantity decays polynomially
in $N$, and Lemma~\ref{lemma:repeatable_chaos} may be applied as described above.
\end{proof}

\subsection{LDP for Enhanced Data}
Let $\mathfrak Z^\eps$ be the enhanced data \eqref{eq: enhanced data} associated with $X^\eps$, and let $\overline{\mathfrak Z}^{\eps,m}$ be the enhanced data associated with $\overline X^{\eps,m}$, obtained by replacing $z^\eps$ by $z^\eps+q^{\eps,m}$. We emphasise that the covariance used in the Wick ordering is, for all objects, that of the original Gaussian free field $\gamma^\eps$. In particular, the Wick powers of $z^\eps+q^{\eps,m}$ are Wick ordered with respect to the covariance of the original Gaussian field $\gamma^\eps$, and not with respect to the enlarged covariance after adding $Q^{\eps,m}$.

\begin{proposition}[Enhanced LDP with Perturbation]\label{prop:path-anomalous-enhanced}
The families
$\overline{\mathfrak Z}^{\eps,m}$ and
$\mathcal T_m(\mathfrak Z^\eps)$ are exponentially equivalent on
$\scrE([-T,T])$. Consequently,
$\overline{\mathfrak Z}^{\eps,m}$ satisfies a good LDP with speed
$\eps^{-1}$ and rate $\mathbf I_m$ defined in \eqref{eq:enhanced-rate-m}.
\end{proposition}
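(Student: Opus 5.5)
Write $z=z^\eps$, $q=q^{\eps,m}$. Since Wick ordering is taken with respect to the covariance of $\gamma^\eps$, which does not see $Q^{\eps,m}$, the binomial formula for Hermite polynomials gives the algebraic identities
\begin{align*}
 :(z+q)^2: &= :z^2: + 2zq + q^2,\\
 :(z+q)^3: &= :z^3: + 3:z^2:q + 3zq^2 + q^3 .
\end{align*}
Hence the components of $\overline{\mathfrak Z}^{\eps,m}-\mathcal T_m(\mathfrak Z^\eps)$ are as follows. The first two components, $(z+q,\partial_t(z+q))-(z,\partial_tz)=(q,\partial_tq)$ and $q$, are of size $q$. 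The third is $2zq+(q^2-\tfrac m3)$. The fourth is
\[
3:z^2:q+3z\big(q^2-\tfrac m3\big)+q^3,
\]
where we have used $3z\cdot\tfrac m3=mz$, which is exactly cancelled by the shift $f_3+mf_1$ in $\mathcal T_m$. Each of these differences in the $\mathcal Y([-T,T])$ components is a sum of terms listed in Proposition~\ref{lem:path-UV}, so each converges to zero superexponentially in $L^p([-T,T];W^{-\kappa,p})$. Making these identities rigorous for the distributional objects will be done via a smooth truncation $P_K$ of $z$ (note $q$ is a trigonometric polynomial for fixed $\eps$): the identities hold exactly at the truncated level and pass to the limit $K\to\infty$ in $L^2(\PP)$, as in the construction of Wick powers.

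The remaining component is the $C([-T,T];\cH^s)$ part, where we need $\sup_{|t|\le T}\|S(t)Q^{\eps,m}\|_{\cH^s}\to0$ superexponentially. Since $S(t)$ is bounded on $\cH^s$ uniformly for $|t|\le T$ (the phase-space propagator is unitary up to constants), this reduces to $\|Q^{\eps,m}\|_{\cH^s}$, which is controlled by Lemma~\ref{lem:expequiv}. A union bound over the finitely many components then gives
\[
\limsup_{\eps\to0}\eps\log\PP\big(d_{\scrE}(\overline{\mathfrak Z}^{\eps,m},\mathcal T_m(\mathfrak Z^\eps))>a\big)=-\infty
\]
for every $a>0$, i.e.\ exponential equivalence.

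For the LDP itself, $\mathcal T_m$ is continuous on $\scrE([-T,T])$: it is an affine map, adding a constant (which lies in $L^p W^{-\kappa,p}$) and the bounded map $f_1\mapsto mf_1$. It is moreover a bijection with continuous inverse $\mathcal T_{-m}$. The contraction principle applied to Proposition~\ref{prop:enhanced-gaussian} therefore gives that $\mathcal T_m(\mathfrak Z^\eps)$ satisfies a good LDP with rate $\mathbf I_0\circ\mathcal T_m^{-1}=\mathbf I_m$. Exponential equivalence (Dembo--Zeitouni, Theorem 4.2.13) transfers this good LDP to $\overline{\mathfrak Z}^{\eps,m}$.

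The main technical content is already contained in Proposition~\ref{lem:path-UV}. The only delicate point remaining is the bookkeeping: verifying that the Wick expansion is taken relative to $\gamma^\eps$, so that the constant $\tfrac m3$ appears in the quadratic term and $mz$ in the cubic term exactly as $\mathcal T_m$ prescribes, and justifying the expansion for distribution-valued $z$ through the truncation limit.
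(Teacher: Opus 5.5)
Your proposal is correct and follows the paper's proof essentially line by line. You use the Hermite binomial expansion relative to the $\gamma^\eps$ covariance to get the same difference $\big((q,\partial_tq),q,2zq+q^2-\frac m3,3:z^2:q+3z(q^2-\frac m3)+q^3\big)$, kill each term superexponentially with Lemma~\ref{lem:expequiv} and Proposition~\ref{lem:path-UV}, and obtain the LDP by the contraction principle through $\mathcal T_m$ plus exponential equivalence \cite[Theorem~4.2.13]{DemboZeitouni}. Your reduction of the $C([-T,T];\cH^s)$ component to $\|Q^{\eps,m}\|_{\cH^s}$ via the uniform boundedness of $S(t)$, and the identification of the rate as $\mathbf I_0\circ\mathcal T_{-m}=\mathbf I_m$, only spell out steps the paper leaves implicit.
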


\begin{proof}
{Using the Hermite binomial identities}
\begin{align*}
 :(z+q)^2:&=:z^2:+2zq+q^2,\\
 :(z+q)^3:&=:z^3:+3:z^2: q+3zq^2+q^3
\end{align*}
we write \begin{equation}\label{eq:enhanced_difference} \overline{\mathfrak Z}^{\eps,m}-\mathcal T_m(\mathfrak Z^\eps)=\big((q,\partial_tq),(q,2zq+q^2-m/3,3:z^2:q+3z(q^2-m/3)+q^3)\big)\end{equation} and each term converges to zero with superexponential probability by Lemma \ref{lem:expequiv}, and Proposition ~\ref{lem:path-UV}. For the claimed large deviation principle, {applying the contraction principle to} the continuous map $\mathcal T_m:\scrE([-T,T])\to\scrE([-T,T])$ given by \eqref{eq:path-T-kappa-first} and the large deviation principle for $\mathfrak Z^\eps$ in Proposition \ref{prop:enhanced-gaussian} show that $\mathcal T_m(\mathfrak Z^\eps)$ satisfies a large deviation principle with rate $\mathbf I_m$. The exponential equivalence established in the first part of the proposition then shows, by \cite[Theorem~4.2.13]{DemboZeitouni}, that $\overline{\mathfrak Z}^{\eps,m}$ satisfies the same large deviation principle. 

\end{proof}
We finally give the
{
\begin{proof}[Proof of Theorem~\ref{thm:path-fine-structure}]
The evaluation map
\[
 \pi_0:\scrE([-T,T])\to\mathcal H^s,\qquad
 \pi_0(\mathbf z,f_1,f_2,f_3)={\mathbf z(0)}
\]
is continuous by the definition of $\scrE$, and $\overline X^{\eps,m}=\pi_0(\overline{\mathfrak Z}^{\eps,m})$. Applying the contraction principle to Proposition~\ref{prop:path-anomalous-enhanced} gives a good LDP for $\overline X^{\eps,m}$ with rate
\[
 I_{0,m}(h)=\inf\{\mathbf I_m(\Xi):\pi_0(\Xi)=h\}=\cE(h).
\]

For the dynamic statement, the canonical construction gives
\[
 \overline U^{\eps,m}=\widehat\Gamma_T(\overline{\mathfrak Z}^{\eps,m})
\]
for $\overline\mu^{\eps,m}$-almost every initial datum. Lemma~\ref{lem:path-global-reconstruction} gives $\{\mathbf I_m<\infty\}\subset\mathscr D_T$, and $\widehat\Gamma_T$ is continuous on the open set $\mathscr D_T$. Lemma~\ref{lem:path-effective-contraction} and Proposition~\ref{prop:path-anomalous-enhanced} therefore give a good LDP with rate
\[
 \widetilde J_m(u)=\inf\{\mathbf I_m(\Xi):\widehat\Gamma_T(\Xi)=u\}=\inf\{\mathcal{E}(h): \widehat{\Gamma}_T(\mathfrak{L}_m(h))=u\}
\]
which {can be easily seen to coincide with} the rate function claimed in \eqref{eq:bad-dynamic-rate} by the identity \eqref{eq:path-reconstruction-skeleton}.
\end{proof}
}

\section{Instability of the Solution at Fixed Temperature} \label{sec:instability}

We record the following instability theorem for the solution, which follows from the construction given in Section \ref{sec: pathological}{.}
{ \begin{theorem}\label{prop:instability} For $i=1,2$, there exist sequences \begin{equation} (w^{N,i})_{N\ge 1}\subset C^\infty\times C^\infty; \qquad w^{N,i}\to 0 \text{ in }\cap_{s<0}\cH^s\end{equation} with the following properties. For $X$ distributed according to the Gibbs measure $\mu^1$, the shift $X^{N,1}:=X+w^{N,1}$ has $\Law(X^{N,1})\ll \mu^1=\Law(X)$, and $X^{N,1}\to X$ in $\cH^{s}$ for every $s<0$, but the canonical solutions $U^{N,1}$ to \eqref{eq:wick-nlw} started at $X^{N,1}$ converge in $\cX^s_T$ to a limit {distinct from} the canonical solution $U$ started at $X$. \\ \\ Similarly, $X^{N,2}:=X+w^{N,2} \to X$ in $\cH^s$ {for every $s<0$}, and $\Law(X^{N,2})\ll \Law(X)$, but the canonical solution $U^{N,2}$ started at $X^{N,2}$ converges to 0 in the topology of {{$H^{-1-\eta}([-T,T];\cH^s)$} for every $s<0$ and $\eta>0$}, and has, almost surely, no convergent subsequences in the topology of $\cX^s_\delta$ for any $s<0, \delta>0$. \\ \\ In both of the above assertions, enlarging the underlying probability space if necessary, the sequences $w^{N,i}$ may alternatively be chosen to be centred, Gaussian and independent of $X$. \end{theorem}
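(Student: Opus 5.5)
I will reuse the construction of Section~\ref{sec: pathological} at fixed temperature $\eps=1$, with $N\to\infty$ in place of $\eps\to0$. For the first assertion, I take $w^{N,1}:=Q^{N,m}$ as in \eqref{eq:path-UV-data}, with $\eps=1$ and a fixed $m>0$, supported on $\Lambda_N$. In the Gaussian version, $Q^{N,m}$ is independent of $X$ and supported on the finite-dimensional space $E_N$. The absolute continuity $\Law(X+w^{N,1})\ll\mu^1$ then follows exactly as in the remark after \eqref{eq:path-hat-mu}.

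Convergence $X^{N,1}\to X$ in $\cH^s$ is Lemma~\ref{lem:expequiv}. That lemma gives convergence in probability, and a Borel--Cantelli argument with its tail bound $\exp(-cN^{\theta-2s})$ upgrades this to almost sure convergence. Proposition~\ref{lem:path-UV} shows, at $\eps=1$, that the difference \eqref{eq:enhanced_difference} tends to zero in $\scrE([-T,T])$ in probability, with polynomial-in-$N$ variance bounds. Hence $\overline{\mathfrak Z}^{N,m}\to\mathcal T_m(\mathfrak Z^1)$.

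For a deterministic non-Gaussian choice, I would fix one realisation of the $\xi_{j,n}$, for instance unimodular phases. The smooth functions $w^{N,1}$ then still satisfy the deterministic analogues of the Fourier bounds, for example $q^2-m/3\to0$ weakly by averaging over nonresonant pairs $n_1+n_2\neq0$. I would handle the mixed terms with $z$ by the same variance computation, now conditional on $q$.

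The main obstacle is that $\mathcal T_m(\mathfrak Z^1)$ need not lie in $\mathscr D_T$. Its remainder equation is the Wick NLW with mass $1+m$, whose Gibbs measure $\mu^1_m$ is a different measure. The mass shift changes the Wick covariance by a smooth amount, so I would argue that $\Law(z)$ is equivalent to the corresponding free field for mass $1+m$. The Wick powers then differ by the finite renormalisation absorbed in $\mathcal T_m$, and Oh--Thomann global existence applies almost surely. With this in hand, Lemma~\ref{lem:path-global-reconstruction} gives $U^{N,1}=\Gamma_T(\overline{\mathfrak Z}^{N,m})\to\Gamma_T(\mathcal T_m\mathfrak Z^1)$ in probability, and along a subsequence almost surely. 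The limit solves $\partial_t^2u+(1+m-\Delta)u+:u^3:=0$. Its $t$-derivative at $t=0$ differs from that of $U$ by $-mX_0\ne0$, in the distributional sense, so the two limits are distinct.

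For the second assertion, I take $m=m_N\to\infty$ slowly, so that the errors in Proposition~\ref{lem:path-UV}, which are polynomial in $m$ times $N^{-\theta\kappa}$, still vanish. The solution then behaves like a Klein--Gordon flow with mass $1+m_N$. Its $\cH^s$ norm stays bounded in time, but it oscillates at frequency $\sqrt{m_N}$. Integrating by parts in time twice gives convergence to $0$ in $H^{-1-\eta}([-T,T];\cH^s)$. Uniform equicontinuity fails because $\partial_tu(0)=X_1$ is fixed while $u$ rotates at rate $\sqrt{m_N}$. This rules out convergent subsequences in $\cX^s_\delta$, since any limit would have to be $0$, contradicting $u(0)=X_0\neq0$. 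Controlling the remainder uniformly as $m_N\to\infty$ is the delicate point. For it, I would use energy conservation for the mass-$(1+m_N)$ problem, which gives bounds uniform in $m_N$.
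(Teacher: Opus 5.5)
Your first assertion follows the paper's route: the shift $Q^{N,m}$ at $\eps=1$, absolute continuity via $E_N$, Borel--Cantelli from the tails in Lemma~\ref{lem:expequiv} and Proposition~\ref{lem:path-UV}, continuity of $\Gamma_T$ at $\mathcal T_m\mathfrak Z$, and distinctness because $mX_0\neq0$. The second assertion, however, has a genuine gap. You let $m_N\to\infty$ and propose to control the remainder uniformly in $m_N$ by energy conservation for the mass-$(1+m_N)$ problem. No such bound is available. The Gibbs data have infinite energy, and the remainder equation for $\mathcal T_m\mathfrak Z$ is forced by $:z^2:+\frac m3$ and $:z^3:+mz$, which grow with $m$, so it has no conserved energy. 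Invariance of the mass-$(1+a)$ Gibbs measure $\mu_a$ does not rescue this. Although $\mu_a\sim\mu^1$ for each fixed $a$, the measures $\gamma_a$ and $\gamma_0$ become asymptotically singular as $a\to\infty$, since their variances differ by a factor at least two on $\sim a$ Fourier modes. Bounds valid under $\mu_a$ therefore do not transfer to $X\sim\mu^1$ uniformly in $a$. Nor does choosing $m_N$ so that the errors of Proposition~\ref{lem:path-UV} vanish help by itself. Lemma~\ref{lem:path-global-reconstruction} gives continuity of $\Gamma_T$ only at each point of $\mathscr D_T$, with a neighbourhood depending on that point, and not uniformly along the divergent family $\mathcal T_{m_N}\mathfrak Z$. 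The paper needs no uniformity in $m$. For each \emph{fixed} $m$, the first assertion gives $\Gamma_T(\mathfrak Z^{N,m})\to U_m$ almost surely. The triviality statement $U_m\to0$ in probability in $H^{-1-\eta}([-T,T];\cH^s)$ as $m\to\infty$ is imported from \cite[Section~6]{OhPocovnicuTzvetkov} and \cite{oh2020remark}, with $m$ playing the role of the divergent renormalisation constant. One then passes to $m_j$ with almost sure convergence and diagonalises with $j_N\to\infty$ slowly. Your Klein--Gordon oscillation heuristic explains why the linear part vanishes, but the nonlinear part is exactly what needs a proof or a citation.

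Two further steps need repair. First, the remainder equation for $\mathcal T_m\mathfrak Z$ is not Oh--Thomann's mass-$(1+m)$ dynamics, and the Wick correction is not absorbed in $\mathcal T_m$. Changing the Wick covariance from $\gamma_0$ to $\gamma_a$ is itself the shift $\mathcal T_{3c(a)}$, because $:u^3:_{(a)}=:u^3:_{(0)}+3c(a)u$ with $c(a)=\lim_N(\sigma_{0,N}-\sigma_{a,N})$. The mass-$(1+m)$ Gibbs dynamics therefore corresponds to $\mathcal T_{m+3c(m)}$; the paper instead uses $\mu_a$ with $a+3c(a)=m$. You must also rewrite Oh--Thomann's solution $S_aX+w_a$ as $S_0X+v$ with $v\in\cX_T^{s_0}$ and zero data, using that $S_a-S_0$ maps $\cH^{-\eta}$ into $\cX_T^{1-\eta}$. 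Only then does $\mathcal T_m\mathfrak Z\in\mathscr D_T$ follow. Second, deterministic $w^{N,i}$ cannot come from an arbitrary fixed choice of unimodular phases, because the cancellations in Proposition~\ref{lem:path-UV} come from Isserlis' theorem. For example, take $\Lambda_N$ the annulus, $\xi_{0,n}\equiv1$, and $\xi_{1,n}=\pm i$ on complementary half-planes. Then, for each fixed $k$ and large $N$, $\widehat{q^2}(k,t)$ stays close to $\frac m3$ for $|t|\ll|k|^{-1}$, so $q^2-\frac m3\not\to0$. The paper instead uses Fubini, picking a realisation of $\xi$ from the full-measure set on which all assertions hold for $\mu^1$-almost every $X$.
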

} In the statement above, we emphasise that the same small shift {$w^{N,i}$} produces the discontinuity for almost every $X$, in contrast to the existence of small, $X$-dependent shifts.

\begin{proof}
We prove first the assertion with $w^{N,i}$ replaced by Gaussian $W^{N,i}$, on the probability space $$(\Omega, \mathcal F,\mathbb P):=(\cap_{s<0}\cH^s,\mathcal{B}(\cap_{s<0}\cH^s),\mu^1)\otimes (\C^{\Z^2},\mathcal{B}(\C^{\Z^2}),\mathcal{N}_\C(0,1)^{\otimes \Z^2/\{\pm 1\}}) $$ where the space supporting $X$ is enlarged by a sequence $\xi_{n,i}, i=0,1, n\in \Z^2$ of complex normal random variables, independent modulo the reality constraint. In the remainder of the proof, we construct the Gaussian $W^{N,i}$ in the first homogeneous Wiener chaos of $\{\xi_{n,i}\}$ and hence independent of $X$, for which the assertions hold almost surely. {The existence of the deterministic $w^{N,i}$ follows by choosing any element of the full-measure set on which all assertions hold.} \\\\ For any fixed $m>0$, let $Q^{N,m}$ be as given in \eqref{eq:path-UV-data}, with now $\eps=1$ fixed and $N$ free, and let $W^{N,1}=Q^{N,m}$, for any $m>0$ fixed. The absolute continuity {was already remarked below} \eqref{eq:path-hat-mu}{ together with $\mu^1\sim\gamma^1$}, and $Q^{N,m}$ are smooth and a linear combination of (finitely many) $\xi_{n,i}$. Letting ${\mathfrak{Z}}, \mathfrak{Z}^{N,m}$ be the enhanced data associated to $X, X+Q^{N,m}$ respectively, the stretched exponential estimates in Lemma \ref{lem:expequiv} and Proposition \ref{lem:path-UV} show that $Q^{N,m}\to 0$ in $\cH^s$ almost surely for every $s<0$, and that $\mathfrak Z^{N,m}\to \mathcal T_m\mathfrak Z$ in $\mathscr E([-T,T])$ almost surely. In Lemma \ref{lem:shifted-enhancement-global} below, we show that $\mathcal T_m\mathfrak Z \in \mathscr D_T$ almost surely, and that the reconstruction $\Gamma_T(\mathcal T_m \mathfrak Z)$ is the global solution to \begin{equation}\label{eq:shifted-old-wick}
    \partial_t^2U_m+(1+m-\Delta)U_m+:U_m^3:=0.
\end{equation} {Here and below in this section, the unlabelled Wick product is taken with respect to the original covariance $\gamma^1$.} By the continuity of $\Gamma_T$ on $\mathscr D_T$, it follows that the canonical solution $U^{N,1}$ starting at $X^{N,1}$ is given by \begin{equation}\label{eq:convergence_fixed_m} U^{N,1}=\Gamma_T(\mathfrak Z^{N,m})\to \Gamma_T(\mathcal T_m\mathfrak Z)=U_m \end{equation} with convergence in the topology of $\cX^s_T$. Comparing \eqref{eq:shifted-old-wick} to the original equation \eqref{eq:wick-nlw}, no $U\neq 0$ can be a solution to both equations, and so $U_m\neq U$, $\mu^1$-almost surely.  \\\\ For the second claim, it follows from the arguments in \cite[Section~6]{OhPocovnicuTzvetkov}, {\cite[Theorem~1.1 and Lemma~2.7]{oh2020remark}}, see also the discussion in \cite[Section 1.3]{OOT}, that the solutions $U_m$ to \eqref{eq:shifted-old-wick} starting from $X\sim \mu^1$ converge to 0 in probability in the topology of $H^{-\eta}([-T,T];H^s)$ as $m\to \infty$, for any $s<0$ and $\eta>0$, and hence in probability in $H^{-1-\eta}([-T,T];\cH^s)$. In our setting, the large mass $m$ plays the same r\^ole in \eqref{eq:shifted-old-wick} as the divergent constant $\sigma_{1,N}\to \infty$ which must be subtracted to ensure the convergence $H_3(P_NX,{\sigma_{1,N}})\to :X^3:$. \\\\ In particular, there exists a sequence $m_j\to \infty$ such that $\|U_{m_j}\|_{{H^{-\eta-1}([-T,T];\cH^s)}}\to 0$ almost surely {for every $s<0$ and $\eta>0$}. Applying a diagonal argument to \eqref{eq:convergence_fixed_m}, there exists a sequence $j_N\to \infty$ sufficiently slowly that $U^{N,2}:=\Gamma_T(\mathfrak Z^{N,m_{j_N}})\to 0$ almost surely {in the same space}, and so that $Q^{N,m_{j_N}}\to 0$ almost surely in $\cH^s$ for every $s<0$. Letting $W^{N,2}:=Q^{N,m_{j_N}}$, ${U^{N,2}}$ is the canonical solution to the original equation \eqref{eq:wick-nlw} starting at $X^{N,2}=X+W^{N,2}$. \\ \\ It also follows that $U^{N,2}$ has, almost surely, no convergent subsequences in $\cX^s_\delta$ for any $\delta>0, s<0$. Indeed, the only possible limit point would be the zero function, whereas almost surely \[\|U^{N,2}\|_{\cX^s_\delta}\ge \|X\|_{\cH^s}-\|Q^{N,m_{j_N}}\|_{\cH^s} \to \|X\|_{\cH^s}>0\] and the proof is complete.\end{proof}

It now only remains to prove

\begin{lemma}\label{lem:shifted-enhancement-global}
Let $X\sim\mu^1$, and let
$ \mathfrak Z
    =
    \big((z,\partial_tz),z,:z^2:,:z^3:\big)$
be the associated enhanced data. Then, for every $m\geq0$ and every
$T<\infty$, $\mathcal T_m\mathfrak Z\in\mathscr D_T$ {almost surely}. Moreover, $ \Gamma_T(\mathcal T_m\mathfrak Z)
$ is a global solution on $[-T,T]$ of \eqref{eq:shifted-old-wick}.

\end{lemma}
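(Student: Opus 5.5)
Unwinding the definitions first, the remainder equation \eqref{eq:path-remainder} with data $\mathcal T_m\mathfrak Z=((z,\partial_t z),z,:z^2:+\tfrac m3,:z^3:+mz)$ reads
\[
 \partial_t^2v+(1-\Delta)v+v^3+3zv^2+3(:z^2:+\tfrac m3)v+:z^3:+mz=0 .
\]
Formally, $U_m=z+v$ then satisfies $\partial_t^2U_m+(1-\Delta)U_m+:U_m^3:+mU_m=0$, since $(\partial_t^2+1-\Delta)z=0$ and $:(z+v)^3:=:z^3:+3:z^2:v+3zv^2+v^3$ for the smooth remainder $v$. This is exactly \eqref{eq:shifted-old-wick}. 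So once $\mathcal T_m\mathfrak Z\in\mathscr D_T$ is known, the identification of $\Gamma_T(\mathcal T_m\mathfrak Z)$ is immediate. The content of the lemma is global existence of $v$ on $[-T,T]$ for almost every $X$.

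To prove global existence, I would absorb the extra mass into the linear flow rather than treat $m$ perturbatively. The plan has three steps.

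\textbf{Step 1: change of enhancement.} Let $z_m=S_mX$ be the linear wave with mass $1+m$, started from the same $X$. The associated Wick powers $:z_m^\ell:$ are taken with respect to the time-independent covariance of $z_m(t)$, which differs from that of $\gamma^1$ by an explicit finite amount: the $\cos^2$ and $\sin^2$ weights pick up ratios $\langle n\rangle^2/(m+\langle n\rangle^2)$. Call this difference $c_m(t)$. It is bounded, with
\[
 \sum_n\Big(\langle n\rangle^{-2}-\tfrac{\cos^2}{\langle n\rangle^2}-\tfrac{\sin^2}{m+\langle n\rangle^2}\Big)
\]
summable, because the summand is $O(m\langle n\rangle^{-4})$ once the phase mismatch between $\cos(t\langle n\rangle)$ and $\cos(t\sqrt{m+\langle n\rangle^2})$ is accounted for. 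The phase mismatch contributes $O(t m\langle n\rangle^{-1})$ per mode, so I would check this summation carefully.

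\textbf{Step 2: apply the mass-$(1+m)$ theory.} The theory of \cite{OhThomann}, which gives invariance of the Gibbs measure and almost-sure global well-posedness, applies verbatim with mass $1+m$. The reason is that the Gibbs measure $\mu^{1}_{(m)}$ built on the Gaussian field with covariance $(m+\langle n\rangle^2)^{-1}$ is mutually absolutely continuous with $\mu^1$, by Feldman--H\'ajek. The difference of the quadratic forms is $m\|u\|_{L^2}^2$. Its Wick-renormalised version $m\int:u^2:$ lies in the second chaos and is exponentially integrable at small enough level, and the resulting density is well defined up to renormalisation constants. Hence, for almost every $X$, the Wick-ordered equation with mass $1+m$, with Wick ordering relative to its own covariance, has a global canonical solution $\widetilde U_m$.

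\textbf{Step 3: compare the two Wick orderings.} Renormalising \eqref{eq:shifted-old-wick} with respect to $\gamma^1$ instead of the mass-$(1+m)$ covariance changes the nonlinearity by a linear term $-3c\,U_m$ with a finite constant $c$. This is again a mass shift, so \eqref{eq:shifted-old-wick} equals the canonically ordered mass-$(1+m-3c)$ equation. For small $m$, $1+m-3c>0$, and in general $c=c(m)$. Since the mass remains positive, the same Gibbs and absolute-continuity argument applies to the adjusted mass.

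The cleaner route I would actually try first avoids the invariant-measure argument for each mass. It runs the Oh--Thomann/Bourgain global argument directly on the remainder $v$: an energy estimate for
\[
 E(v)=\tfrac12\|v\|_{H^1}^2+\tfrac12\|\partial_tv\|_{L^2}^2+\tfrac m2\|v\|_{L^2}^2+\tfrac14\|v\|_{L^4}^4 .
\]
The $+m$ terms only add the nonnegative quantity $\tfrac m2\|v\|^2_{L^2}$ to the energy, plus the forcing $mz$, which lies in $L^p_tW^{-\kappa,p}$ and is controlled like $:z^3:$. So any a priori energy bound of the form $\partial_tE\lesssim(1+\|\mathbf f\|)E$ for the original problem (as in \cite{GubinelliKochOhTolomeo,OhThomann}, where $z$-terms are handled by the $\mathcal Y$ norms) extends with constants depending on $m$. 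A Gronwall argument then gives $\sup_{[-T,T]}E(v)<\infty$ and global existence.

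\textbf{Main obstacle.} Such an energy bound is not literally available at the $\mathcal H^{s_0}$ level, because the terms $3:z^2:v$ and $:z^3:$ are too rough for a naive energy argument. I expect this step to be the difficulty. If the Gronwall route fails, I would fall back on the Gibbs-measure comparison of Steps 1--3, with Lemma~\ref{lem:change-measure}-type $L^q$ density bounds.
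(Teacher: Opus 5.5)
Your overall architecture is the paper's: pass to a Gibbs measure with a different mass, equivalent to $\mu^1$ by Feldman--H\'ajek; use the Oh--Thomann global theory for that mass; and absorb the change of Wick ordering into a finite constant. However, Step~3 gets the mass bookkeeping wrong, and this is the crux of the lemma.

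Write $:\cdot:_{(a)}$ for Wick ordering relative to $(1+a-\Delta)^{-1}$, and set $c(a)=\sum_{n\in\Z^2}\big(\langle n\rangle^{-2}-(\langle n\rangle^2+a)^{-1}\big)$, so that $:u^3:_{(0)}=:u^3:_{(a)}-3c(a)u$. Re-ordering \eqref{eq:shifted-old-wick} at mass $1+m$ gives $\partial_t^2u+(1+m-3c(m)-\Delta)u+:u^3:_{(m)}=0$. Here the Wick ordering is still that of mass $1+m$, so this is \emph{not} the canonical equation of mass $1+m-3c(m)$, and re-ordering to match would shift the mass again. Your positivity claim also fails: the modes $n=0$ and $|n|=1$ alone give $3c(1)\ge 3(\tfrac12+\tfrac23)=\tfrac72>2$.

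The missing idea is a self-consistent mass. The map $a\mapsto a+3c(a)$ is a continuous increasing bijection of $[0,\infty)$, so there is a unique $a\in[0,m]$ with $a+3c(a)=m$. Then $:u^3:_{(0)}+mu=:u^3:_{(a)}+au$, so \eqref{eq:shifted-old-wick} \emph{is} the canonical Wick-ordered equation with mass $1+a$. Put differently, your Step~2 run at mass $1+m$ handles the shift $m+3c(m)$, not $m$, and you must invert this map.

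The second gap is that global existence for the mass-$(1+a)$ equation does not by itself give $\mathcal T_m\mathfrak Z\in\mathscr D_T$. That statement concerns the remainder around $z=SX$, not around $z_a=S_aX$.

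Apply Oh--Thomann under $\mu_a$, where $z_a$ is stationary with covariance $(1+a-\Delta)^{-1}$. Transfer the almost-sure conclusions to $\mu^1$ via $\mu_a\sim\gamma_a\sim\gamma_0\sim\mu^1$. No time-dependent covariance $c_m(t)$ ever appears, so your covariance summation in Step~1 is unnecessary. This gives $u_a=z_a+w_a$ with $w_a\in\cX_T^{s_0}$.

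You then need $r_a:=z_a-z$ to be a regular correction, and this is where the phase mismatch genuinely enters. The bound $|(\langle n\rangle^2+a)^{1/2}-\langle n\rangle|\lesssim_a\langle n\rangle^{-1}$ makes $S_a-S$ map $\cH^{-\eta}$ into $\cX_T^{1-\eta}\subset\cX_T^{s_0}$ for $0<\eta<1-s_0$, with $r_a(0)=\partial_tr_a(0)=0$. Then $v:=w_a+r_a\in\cX_T^{s_0}$ has zero data. Using $(\partial_t^2+1-\Delta)r_a=-az_a$ and the identity above, one checks that $v$ solves the remainder equation with data $\mathcal T_m\mathfrak Z$ on all of $[-T,T]$. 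Uniqueness then identifies $\Gamma_T(\mathcal T_m\mathfrak Z)=u_a$.

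As you anticipated, the Gronwall energy route is not available at this regularity, and the paper does not use it.
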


\begin{proof}
{Write $z_0:=z$ and $\mathfrak Z_0:=\mathfrak Z$.} For $a\geq0$, set
\[
    \langle\nabla\rangle_a=(1+a-\Delta)^{1/2},
\]
and let $\gamma_a$ denote the centred Gaussian measure whose first
component has covariance $(1+a-\Delta)^{-1}$ and whose second component
is spatial white noise. For any $a\ge 0$, we will denote with a subscript $_{(a)}$ the Wick ordering with respect to the covariance of $\gamma_a$, so that the Wick ordering in the statement \eqref{eq:shifted-old-wick} is $:u^3:_{(0)}$.  Define
\[
    \sigma_{a,N}
    =
    \sum_{|n|\leq N}\frac{1}{1+a+|n|^2}; \qquad c(a)
    :=
    \lim_{N\to\infty}(\sigma_{0,N}-\sigma_{a,N}).
\]
The limit is finite, continuous and increasing, as the summand is $\mathcal{O}(\langle n\rangle^{-4})$, locally uniformly in $a$. Consequently, for every $m\geq0$, there exists a unique $a=a(m)$ such that
$
    m=a+3c(a).$ The limiting procedure defining the Wick powers produces $:u^3:_{(a)}=:u^3:_{(0)}+3c(a)u$, and so
\begin{equation}\label{eq:change-wick-mass}
    :u^3:_{(a)}+au
    =
    :u^3:_{(0)}+mu.
\end{equation}

We next compare the corresponding Gibbs measures. Letting $\mu_a$ denote the
defocusing Gibbs measure obtained from $\gamma_a$ by the density
\[
    \frac{d\mu_a}{d\gamma_a}
    =
    Z_a^{-1}
    \exp\left(
        -\frac14\int_{\mathbb T^2}:u^4:_{(a)}\,dx
    \right)
\]
so that $\mu_0=\mu^1$ is the Gibbs measure in the statement. A simple calculation using the Feldman-H{\'a}jek criterion and the estimate $$\left|\frac{\lambda_a(n)}{\lambda_0(n)}-1\right|^2{\lesssim_a} \langle n\rangle^{-4}$$ for the eigenvalues $\lambda_a(n)=(\langle n\rangle^2+a)^{-1}$ shows that Gaussian measures $\gamma_a\sim \gamma_0$ are mutually absolutely
continuous. By the construction of the Gibbs measures, $\mu_a\sim \gamma_a, \mu_0\sim \gamma_0$, and together, it follows that
\begin{equation}\label{eq:gibbs-mass-equivalence}
    \mu_a\sim\mu_0=\mu^1.
\end{equation}

The global existence theory of \cite[Theorem 1.5]{OhThomann} shows that, for the Wick-ordered wave equation with linear operator $1+a-\Delta$, there is a set
$\Omega_a$ of full $\mu_a$-measure such that, for $X\in\Omega_a$, the
canonical solution
\[
    u_a=z_a+w_a,
    \qquad
    z_a=S_a(t)X,
\]
is defined globally, where $S_a$ denotes the linear propagator associated
with $\partial_t^2+1+a-\Delta$, and where $w_a\in\mathcal X_T^{s_0}, s_0\in (\frac12,1)$ satisfies the remainder equation with data $\mathfrak{Z}_a=((z_a,\partial_t z_a), (:z^\ell_a:_{(a)}:1\le \ell\le 3))$, which simplifies to \begin{align}
 \partial_t^2w_a+(1+a-\Delta)w_a+:u_a^3:_{(a)}&=0,\label{eq:path-remainder-shifted}\\
 (w_a,\partial_tw_a)|_{t=0}&=(0,0).\label{eq: path-remainder-initial-cond-shifted}
\end{align} {We claim that, on a further full-measure event, $v:=w_a+z_a-z_0$ solves the remainder equation with data $\mathcal T_m\mathfrak Z_0$ on $[-T,T]$, belongs to $\cX_T^{s_0}$, and satisfies $\Gamma_T(\mathcal T_m\mathfrak Z_0)=u_a$. Since $\mu_a\sim\mu^1$ by \eqref{eq:gibbs-mass-equivalence}, this will complete the proof.} \\ \\  We first consider the difference of the linear components $(z_a-z_0)(t)=(S_a-S_0)(t)X$.  Choosing $0<\eta<1-s_0$, we have $
    X\in\mathcal H^{-\eta}$ almost surely. Writing $\omega_a(n)=(1+a+|n|^2)^{1/2}$, the estimate $|\omega_a(n)-\omega_0(n)|
    \lesssim_a \langle n\rangle^{-1}$ shows that \begin{equation}\label{eq:propagator-smoothing} (S_a-S_0): \cH^{-\eta}\to \cX^{1-\eta}_T \end{equation} continuously. In particular,
\[
    r_a:=z_a-z_0=(S_a-S_0)(t)X \in \cX_T^{1-\eta}\subset\mathcal X_T^{s_0}; \qquad r_a(0)=0,
    \qquad
    \partial_tr_a(0)=0.
\]
Writing $v:=w_a+r_a$, it follows that the globally defined path $u_a$ satisfies \begin{equation}\label{eq:rewrite-old-propagator}
    u_a=z_0+v,
    \qquad
    v\in\mathcal X_T^{s_0},
    \qquad
    (v,\partial_tv)|_{t=0}=(0,0).
\end{equation}

Substituting $u_a=w_a+z_a=v+z_0$ into the remainder equation for $w_a$, using the identities \eqref{eq:change-wick-mass} and $(\partial_t^2+1-\Delta)r_a=-az_a$, $v$ solves the equation \begin{equation}\label{eq:Tm-remainder-identification}
\begin{split}
    (\partial_t^2+1-\Delta)v
    &+v^3+3z_0v^2
    +3\left(:z_0^2:_{{(0)}}+\frac m3\right)v \\
    &\qquad
    +\left(:z_0^3:_{{(0)}}+mz_0\right)=0.
\end{split}
\end{equation}
which is precisely the remainder equation associated to \[
    \mathcal T_m\mathfrak Z_0
    =
    \left(
        (z_0,\partial_tz_0),
        z_0,
        :z_0^2:_{{(0)}}+\frac m3,
        :z_0^3:_{{(0)}}+mz_0
    \right).
\]
Since $v\in\mathcal X_T^{s_0}$ on the whole interval $[-T,T]$,
it follows that $
    \mathcal T_m\mathfrak Z\in\mathscr D_T
$
and, by uniqueness of the remainder equation,
$\Gamma_T(\mathcal T_m\mathfrak Z)=u_a$.

\end{proof}

\noindent{\bf  Acknowledgements}. This work was supported by the grant `ToMaBOLD' from the Norwegian Research Council (Project number 355619). The author thanks Bj\"orn Bringmann for encouraging conversations regarding the area, and to Avi Mayorcas and Ben Gess for interesting conversations in the course of preparing the manuscript.

\bibliography{literature_aihp}
\bibliographystyle{plain}

\end{document}